 \documentclass[11pt]{amsart}
 \usepackage{amsmath,amsfonts,mathrsfs,amsthm,amssymb}
 \usepackage{amssymb}
 \usepackage{graphicx}
 \usepackage{framed}
 \usepackage{color}
 \usepackage{hyperref}
 \usepackage{mathabx}
 \usepackage{mathtools}
 \usepackage{tikz}
 \usepackage{pdflscape}
 
\newcommand\diag{\operatorname{diag}}

 \newcommand\id{\mathrm{id}}
 \newcommand\tr{\mathrm{tr}}
 
 \newcommand\tGr{\mathrm{Gr}}
 \newcommand\tLG{\mathrm{LG}}

 \newcommand\tMat{\mathrm{Mat}}

 \newcommand\qRa{\quad\Rightarrow\quad}

 \newcommand\bop{\bigoplus}
 \newcommand\op{\oplus}

 \newcommand\fann{\mathfrak{ann}}

 \newcommand\bS{{\mathbf S}} 
 \newcommand\bT{{\mathbf T}}
 \newcommand\bU{{\mathbf U}}
 \newcommand\bV{{\mathbf V}}
   
 \newcommand\bX{{\mathbf X}} 
 \newcommand\bY{{\mathbf Y}}
 \newcommand\bZ{{\mathbf Z}}
 
 \newcommand\rnk{\mathrm{rank}}


 \newcommand\fcsp{\mathfrak{csp}}

 \newcommand\ff{{\mathfrak f}}
 \newcommand\fg{{\mathfrak g}}
 \newcommand\fgl{\mathfrak{gl}}
 \newcommand\fh{{\mathfrak h}}

 \newcommand\fl{{\mathfrak l}}
 \newcommand\fm{{\mathfrak m}}

 \newcommand\fp{{\mathfrak p}}
 \newcommand\fq{{\mathfrak q}}
 \newcommand\fr{{\mathfrak r}}
 \newcommand\fs{{\mathfrak s}}
 \newcommand\fsl{\mathfrak{sl}}
 \newcommand\fso{\mathfrak{so}}
 \newcommand\fsp{\mathfrak{sp}}

 \newcommand\fz{{\mathfrak z}}

 \newcommand\fC{{\mathfrak C}}
 \newcommand\fD{{\mathfrak D}}

 \newcommand\fG{{\mathfrak G}}

 \newcommand\fP{{\mathfrak P}}

 \newcommand\fS{{\mathfrak S}}


 \newcommand\cC{{\mathcal C}}
 \newcommand\cD{{\mathcal D}}
 \newcommand\cE{{\mathcal E}}
 \newcommand\cF{{\mathcal F}}
 \newcommand\cG{{\mathcal G}}
 
 \newcommand\cI{{\mathcal I}}
 \newcommand\cJ{{\mathcal J}}
 
 \newcommand\cL{{\mathcal L}}
 \newcommand\cM{{\mathcal M}}
 \newcommand\cN{{\mathcal N}}

 \newcommand\cQ{{\mathcal Q}}
 \newcommand\cR{{\mathcal R}}
 \newcommand\cS{{\mathcal S}}
 \newcommand\cT{{\mathcal T}}
 
 \newcommand\cV{{\mathcal V}}
 \newcommand\cW{{\mathcal W}}

 \newcommand\sfa{\mathsf{a}}
 \newcommand\sfb{\mathsf{b}}
 \newcommand\sfc{\mathsf{c}}
 \newcommand\sfd{\mathsf{d}}
 \newcommand\sfe{\mathsf{e}}

 \newcommand\sfr{\mathsf{r}}
 \newcommand\sfs{\mathsf{s}}

 \newcommand\sfw{\mathsf{w}}

 \newcommand\sfC{\mathsf{C}}
 
 \newcommand\sfE{\mathsf{E}}
 \newcommand\sfF{\mathsf{F}}
 
 \newcommand\sfH{\mathsf{H}}
 \newcommand\sfI{\mathsf{I}}

 \newcommand\sfQ{\mathsf{Q}}

 \newcommand\sfY{\mathsf{Y}}
 \newcommand\sfZ{\mathsf{Z}}

 \newcommand\bbA{{\mathbb A}}
 
 \newcommand\bbC{{\mathbb C}}

 \newcommand\bbH{{\mathbb H}}

 \newcommand\bbO{{\mathbb O}}
 \newcommand\bbP{{\mathbb P}}
 \newcommand\bbQ{{\mathbb Q}}
 \newcommand\bbR{{\mathbb R}}

 \newcommand\bbV{{\mathbb V}}

 \newcommand\bbZ{{\mathbb Z}}

 \newcommand\tEnd{\mathrm{End}}

 \newcommand\vol{\mathrm{vol}}

 \newcommand\tspan{\mathrm{span}}

 \newcommand\Ben{\begin{enumerate}}
 \newcommand\Een{\end{enumerate}}
 \newcommand\Bex{\begin{example}}
 \newcommand\Eex{\end{example}}

 \def\inj{\hookrightarrow}



\def\tGL{\text{GL}}

 \newcommand\SL{\mathrm{SL}}

 \newcommand\SO{\mathrm{SO}}
 
 \newcommand\Sp{\mathrm{Sp}}
 \newcommand\ad{{\rm ad}}

 
%




%


 \newcommand\tgr{\mathrm{gr}}


 \def\assoc/{associative}
 \def\arb/{arbitrary}
 \def\btw/{between}
 \def\coeff/{coefficient}
 \def\cohom/{cohomology}
 \def\coord/{coordinate}
 \def\coordsys/{coordinate system}
 \def\cpt/{compact}
 \def\cred/{completely reducible}
 \def\cts/{continuous}
 \def\dga/{differential-graded algebra}
 \def\dR/{de Rham}
 \def\Euc/{Euclidean} 
 \def\grp/{group}
 \def\hom/{homomorphism}
 \def\inv/{invariant}
 \def\iso/{isomorphism}
 \def\La/{Lie algebra}
 \def\Lag/{Lagrangian Grassmannian}
 \def\LG/{Lie group}
 \def\MA/{Monge--Amp\`ere}
 \def\MC/{Maurer--Cartan}
 \def\lintr/{linear transformation} 
 \def\mfld/{manifold}
 \def\nb/{normal bundle}
 \def\nbd/{neighbourhood}
 \def\nondeg/{non-degenerate}
 \def\posdef/{positive definite}
 \def\pu/{partition of unity}
 \def\rep/{representation}
 \def\Riem/{Riemannian}
 \def\sg/{subgroup}
 \def\ss/{semi-simple}
 \def\inv/{invariant}
 \def\irr/{irreducible}
 \def\Jacid/{Jacobi identity}
 \def\li/{linearly independent}
 \def\nd/{nowhere dependent}
 \def\nz/{nowhere zero}
 \def\on/{orthonormal}
 \def\onb/{\on/ basis}
 \def\orc/{\orth/ complement}
 \def\orth/{orthogonal}
 \def\orp/{\orth/ projection}
 \def\pde/{partial differential equation}
 \def\resp/{respectively}
 \def\seq/{sequence}
 \def\std/{standard}
 \def\SW/{Stiefel-Whitney}
 \def\uc/{universal cover}
 \def\vb/{vector bundle}
 \def\vf/{vector field}
 \def\vs/{vector space}
 \def\wrt/{with respect to}
 



%
%


 \renewcommand\mod{\,{\rm mod}\ }
 \newcommand\qbox[1]{\quad\mbox{#1}\quad}
 \renewcommand\dim{{\rm dim}}
 \newcommand\codim{{\rm codim}}

\vfuzz2pt 
\hfuzz2pt 


\newcommand\Seg{\operatorname{Seg}}
\newcommand\CSp{\operatorname{CSp}}
\newcommand\JS{\mathcal{J\!S}}
\newcommand\Ch{\operatorname{Ch}}
\newcommand\Ss{\operatorname{ss}}
 \newcommand\bE{{\mathbf E}}
 \newcommand\GJ{\cJ_3(\emptyset)}

\newtheorem{theorem}{Theorem}[section]
\newtheorem{lemma}[theorem]{Lemma}
\newtheorem{cor}[theorem]{Corollary}
\newtheorem{prop}[theorem]{Proposition}
\theoremstyle{defn}
\newtheorem{defn}[theorem]{Definition}
\newtheorem{example}[theorem]{Example}

\theoremstyle{remark}
\newtheorem{remark}[theorem]{Remark}
\numberwithin{equation}{section}

\title[Exceptionally simple PDE]
      {Exceptionally simple PDE}

\author[D.~The]{Dennis~The}

\date{\today}

\address{Fakult\"at f\"ur Mathematik, Universit\"at Wien, Oskar-Morgenstern-Platz 1, 1090 Wien, Austria}

\email{dennis.the@univie.ac.at}

\address{Department of Mathematics \& Statistics, University of Troms\o, N-9037, Troms\o, Norway}

\email{dennis.the@uit.no}


\subjclass[2000]{Primary: 58J70; Secondary: 22E46, 53Bxx, 53D10}

\keywords{Parabolic contact structure, contact symmetry, exceptional Lie group, Jordan algebra, cubic form, sub-adjoint variety, Goursat PDE, Monge equations}

\textwidth=7truein \hoffset=-1truein \textheight=9.5truein
\voffset=-.75in

\begin{document}
\maketitle

 \begin{abstract} 
 We give local descriptions of parabolic contact structures and show how their flat models yield explicit PDE having symmetry algebras isomorphic to all complex simple Lie algebras except $\fsl_2$.  This yields a remarkably uniform generalization of the Cartan--Engel models from 1893 in the $G_2$ case.  We give a formula for the harmonic curvature of a $G_2$-contact structure and describe submaximally symmetric models for general $G$-contact structures.
 \end{abstract}

 \section{Introduction}
 \label{S:intro}

 The Cartan--Killing classification of all complex simple Lie algebras was one of the great milestones of 19th century mathematics.  In addition to the classical series of type $A_\ell,B_\ell,C_\ell,D_\ell$ (corresponding to the complex matrix Lie algebras $\fsl_{\ell+1}, \fso_{2\ell+1}, \fsp_{2\ell}, \fso_{2\ell}$), five surprising ``exceptional'' Lie algebras of type $G_2, F_4, E_6, E_7, E_8$ of dimensions 14, 52, 78, 133, 248 were discovered.  Since Lie algebras arose from the study of transformation groups, one can naturally ask for geometric structures whose symmetry algebra is a given simple Lie algebra.  In 1893, Cartan \cite{Car1893} and Engel \cite{Eng1893} announced the first explicit (local) geometric realizations for $G_2$ (see Table \ref{F:G2-models}), most of which can be formulated as differential equations.
 
 Later, in his 5-variables paper \cite{Car1910}, Cartan established remarkable correspondences between:
  \begin{itemize}
  \item contact (external) symmetries of (non-Monge-Amp\`ere) parabolic Goursat PDE in the plane;
  \item contact (external) symmetries of nonlinear involutive pairs of PDE in the plane;
  \item symmetries of $(2,3,5)$-distributions.
  \end{itemize}
 In a tour-de-force application of his method of equivalence, Cartan then solved the equivalence problem for $(2,3,5)$-distributions.  Nowadays, we formalize this as a (regular, normal) parabolic geometry of type $(G_2, P_1)$.  (For the parabolic subgroup $P_1 \subset G_2$, see ``Conventions'' below.)  This yields a notion of curvature for such geometries and there is a (locally) unique ``flat'' model with maximal symmetry dimension $\dim(G_2) = 14$.  The 1893 $G_2$-models $\overline\cE,\cE,\cF$ are associated to the flat case of this general curved story.
 
  \begin{table}[h]
 \[
 \begin{array}{|c|c|c|} \hline
 \mbox{Dim} & \mbox{Geometric structure} & \mbox{Model} \\ \hline\hline
 7 & \mbox{Parabolic Goursat PDE $\cF$} & \begin{array}{l} 9(u_{xx})^2 + 12 (u_{yy})^2 (u_{xx} u_{yy} - (u_{xy})^2) \\
 \qquad + 32 (u_{xy})^3 - 36u_{xx} u_{xy} u_{yy} = 0\end{array}\\ \hline
 6 & \mbox{Involutive pair of PDE $\cE$} & u_{xx} = \frac{1}{3} (u_{yy})^3, \quad u_{xy} = \frac{1}{2} (u_{yy})^2 \\ \hline
 5 & \mbox{$(2,3,5)$-distribution $\overline\cE$} & 
 \begin{array}{c} 
 dx_2 - x_4 dx_1, \,\,
 dx_3 - x_2 dx_1, \,\,
 dx_5 - x_4 dx_2
\\
  \mbox{(equivalently, Hilbert--Cartan: $Z' = (U'')^2$)}
  \end{array}\\ \hline
 5 & \begin{tabular}{c} $G_2$-contact structure \\ (contact twisted cubic field) \end{tabular} & \left\{ \begin{array}{c} 
 dz + x_1 dy_1 - y_1 dx_1 + x_2 dy_2 - y_2 dx_2 = 0,\\ 
 dx_2^2 + \sqrt{3} dy_1 dy_2 = 0,\\
 dx_2 dy_2 - 3 dx_1 dy_1= 0,\\
 dy_2^2 + \sqrt{3} dx_1 dx_2 = 0
 \end{array} \right.\\ \hline
 \end{array}
 \]
 \caption{The Cartan--Engel $G_2$ models}
 \label{F:G2-models}
 \end{table}

Yamaguchi \cite{Yam1999} generalized the reduction theorems underlying Cartan's correspondences in \cite{Car1910, Car1911}.  For all $G \neq A_\ell, C_\ell$, he identified the reduced geometries analogous to $G_2 / P_1$ (see \cite[pg.310]{Yam1999}) and proved the existence of corresponding (nonlinear) PDE admitting external symmetry $\fg$.  However, these PDE were not {\em explicitly} described.\footnote{In \cite[Sec.6.3]{Yam2009}, Yamaguchi gave explicit {\em linear} PDE with $E_6$ and $E_7$ symmetry, but these are not the PDE from \cite{Yam1999}.}  Exhibiting these models is one of the results of our article.

 Notably absent in the Cartan--Yamaguchi story is Engel's 1893 model, namely a contact 5-manifold whose contact distribution is endowed with a twisted cubic field, which is the flat model for {\em $G_2$-contact structures}, i.e.\ $G_2 / P_2$ geometries.  Our article will focus on its generalization to structures called {\em $G$-contact structures} (or {\em parabolic contact structures}), modelled on the adjoint variety $G/P \cong G^{ad} \inj \bbP(\fg)$ of a (connected) complex simple Lie group $G$.  This adjoint variety is always a complex contact manifold except for $A_1 / P_1 \cong \bbP^1$, so $G = A_1 \cong \SL_2$ will be henceforth excluded.  Letting $\dim(G/P) = 2n+1$, a $G$-contact structure consists of a contact manifold $(M^{2n+1},\cC)$ (locally, the first jet-space $J^1(\bbC^n,\bbC)$) with $\cC$ (a field of conformal symplectic spaces) equipped with additional geometric data.

 Restrict now to $G \neq A_\ell, C_\ell$.  Earlier formulations of $G$-contact structures identified $\cC$ as a tensor product of one or more auxilliary vector bundles: in the $G_2$ case, $\cC \cong S^3 E$ where $E \to M$ has rank two,  and similarly for the exceptional cases \cite[\S 4.2.8]{CS2009}; for the $B_\ell,D_\ell$ cases (Lie contact structures), see \cite{SY1989}.  While these abstract descriptions were sufficient for solving the equivalence problem, no concrete local descriptions were given in these works.  Recently, a local description in terms of a conformal quartic tensor $[\sfQ]$ on $\cC$ was used by Nurowski \cite{Nur2013} and Leistner et al. \cite{LNS2016}.  But this viewpoint does not naturally lead to PDE.

 We start from Engel's algebro-geometric perspective:  $G$-contact structures can be described in terms of a {\em sub-adjoint variety field} $\cV \subset \bbP(\cC)$.  But $\cV$ naturally induces other fields $\widehat\cV \subset \widetilde\cV \subset M^{(1)}$ and $\tau(\cV) = \{ \sfQ = 0 \} \subset \bbP(\cC)$, and it turns out that these essentially give equivalent descriptions of the same $G$-contact structure.  In particular, their symmetry algebras are the {\em same}.  Here, $M^{(1)} \to M$ is the {\em Lagrange--Grassmann bundle}, whose fibre over $m \in M$ is the Lagrangian--Grassmannian $\tLG(\cC_m)$.  Locally, $M^{(1)}$ is isomorphic to the second jet-space $J^2(\bbC^n,\bbC)$, so $\widehat\cV$ and $\widetilde\cV$ yield second-order PDE $\cE$ and $\cF$.  (Note $\cE \subset \cF$.)  Since the equivalence problem for $G$-contact structures is solved (see \cite{CS2009} for details) via a (regular, normal) parabolic geometry of type $(G,P)$, the maximal symmetry dimension is $\dim(G)$, and the (locally unique) flat $G$-contact structure realizes it.  In this way, the flat structure yields $G$-invariant PDE $\cE$ and $\cF$ (fibred over $M = G/P$) with ({\em external / contact}) symmetry algebra precisely $\fg$.

 To make $\cE$ and $\cF$ explicit locally, we use (see \S \ref{S:LM}) the parametric description of a sub-adjoint variety due to Landsberg and Manivel \cite{LM2001} in terms of a (complex) Jordan algebra $W$ with cubic form $\fC \in S^3 W^*$.  Let $n = 1 + \dim(W)$. Pick any basis $\{ \sfw_a \}_{a=1}^{n-1}$ on $W$ (with dual basis $\{ \sfw^a \}$) and let $\{ x^i \}_{i=0}^{n-1}$ be corresponding linear coordinates adapted to $\bbC^n \cong \bbC \op W$.  Extend this to standard jet-space coordinates $(x^i,u,u_i,u_{ij})$ on $J^2(\bbC^n,\bbC)$.  Then Theorem \ref{T:flat} gives our generalization in a uniform manner (see Tables \ref{F:ExcSimp} and \ref{F:sym}).  
 
  \begin{table}[h]
 \[
 \begin{array}{|c|c|c|} \hline
 \cF \subset J^2(\bbC^n,\bbC) & \left\{ \begin{array}{l} u_{00} = t^a t^b u_{ab} - 2\fC(t^3)\\ u_{a0} = t^b u_{ab} - \frac{3}{2} \fC_a(t^2)\end{array} \right.\\ \hline
 \cE \subset J^2(\bbC^n,\bbC) & (u_{ij}) 
 = \displaystyle\left( \begin{array}{c|c} u_{00} & u_{0b} \\ \hline u_{a0} & u_{ab}\end{array} \right) 
 = \left( \begin{array}{c|c} \fC(t^3) & \frac{3}{2} \fC_b(t^2)\\[0.05in] \hline \frac{3}{2} \fC_a(t^2) & 3\fC_{ab}(t)  \end{array} \right) \\ \hline
 (J^1(\bbC^n,\bbC),\cC,\cV) 
 & \begin{array}{c} 
 \cV = \{ [\bV(\lambda,t)] : [\lambda,t] \in \bbP(\bbC \op W) \} \subset \bbP(\cC), \mbox{ where} \\
 \bV(\lambda,t) = \lambda^3\bX_0 - \lambda^2 t^a \bX_a - \frac{1}{2} \fC(t^3) \bU^0 - \frac{3}{2} \lambda \fC_a(t^2) \bU^a,\\
 \mbox{and}\quad \bX_i = \partial_{x^i} + u_i \partial_u, \quad \bU^i = \partial_{u_i} \end{array}\\ \hline
 (J^1(\bbC^n,\bbC),\cC,[\sfQ])
 & \begin{array}{c}
 \sfQ = (\omega^i \theta_i )^2 + 2 \theta_0 \fC(\Omega^3) - 2 \omega^0 \fC^*(\Theta^3) - 9 \fC_a(\Omega^2) (\fC^*)^a(\Theta^2),\\
 \mbox{where }\quad 
 \omega^i = dx^i, \,\, 
 \theta_i = du_i, \,\,
 \Omega = \omega^a \otimes \sfw_a, \,\,
 \Theta = \theta_a \otimes \sfw^a 
 \end{array}\\ \hline
 \end{array}
 \]
 \begin{align*}
  (t = t^a \sfw_a\in W;\,\, n = 1 + \dim(W);\,\, 0 \leq i,j \leq n-1; \,\,1 \leq a,b \leq n-1)
 \end{align*}
 \caption{Equivalent descriptions of the flat $G$-contact structure $(G \neq A_\ell, C_\ell)$}
 \label{F:ExcSimp}
 \end{table}

 \begin{small}
 \begin{table}[h]
 \[
 \begin{array}{|c||@{\,}c@{\,}|@{\,}c@{\,}|@{\,}c@{\,}|@{\,}c@{\,}|@{\,}c@{\,}|@{\,}c@{\,}|@{\,}c@{\,}|@{\,}c@{\,}|} \hline
 G & B_\ell \,\, (\ell \geq 3) & D_\ell \,\, (\ell \geq 5) & G_2 & D_4 & F_4 & E_6 & E_7 & E_8\\ \hline
 \mbox{Cubic Jordan algebra } W & \JS_{2\ell-5} & \JS_{2\ell-6} & \GJ & \cJ_3(\underline{0}) & \cJ_3(\bbR_\bbC) & \cJ_3(\bbC_\bbC)  & \cJ_3(\bbH_\bbC)  & \cJ_3(\bbO_\bbC) \\ \hline
 n = 1 + \dim(W) & 2\ell - 3 & 2\ell - 4 & 2 & 4 & 7 & 10 & 16 & 28\\ \hline
 \begin{tabular}{c} Model $G/P$ \& \\  $\dim(G/P) = 2n+1$ \end{tabular} 
 & \begin{array}{c} B_\ell / P_2 \\ 4\ell - 5 \end{array}
 & \begin{array}{c} D_\ell / P_2 \\ 4\ell - 7 \end{array}
 & \begin{array}{c} G_2 / P_2 \\ 5 \end{array} 
 & \begin{array}{c} D_4 / P_2 \\ 9 \end{array} 
 & \begin{array}{c} F_4 / P_1 \\ 15 \end{array} 
 & \begin{array}{c} E_6 / P_2 \\ 21 \end{array} 
 & \begin{array}{c} E_7 / P_1 \\ 33 \end{array} 
 & \begin{array}{c} E_8 / P_8 \\ 57 \end{array}
 \\ \hline
 \begin{tabular}{c} CC reduction $\overline\cE$ of $\cE$\\ \& $\dim(\overline\cE) = 3n-1$\end{tabular} 
 & \begin{array}{c} B_\ell / P_{1,3} \\ 6\ell - 10 \end{array}
 & \begin{array}{c} D_\ell / P_{1,3} \\ 6\ell - 13 \end{array}
 & \begin{array}{c} G_2 / P_1 \\ 5 \end{array}
 & \begin{array}{c} D_4 / P_{1,3,4} \\ 11 \end{array}
 & \begin{array}{c} F_4 / P_2 \\ 20 \end{array} 
 & \begin{array}{c} E_6 / P_4 \\ 29 \end{array} 
 & \begin{array}{c} E_7 / P_3 \\ 47 \end{array} 
 & \begin{array}{c} E_8 / P_7 \\ 83 \end{array} \\ \hline
 \end{array}
 \]
 \caption{Data associated with the flat $G$-contact structure $(G \neq A_\ell, C_\ell)$}
 \label{F:sym}
 \end{table}
 \end{small}
 
 Remarkably, the PDE $\cF$ and $\cE$ admit an even simpler description: they are respectively the first and second-order envelopes of the family of inhomogeneous {\em linear} PDE $u_{00} - 2t^a u_{a0} + t^a t^b u_{ab} = \fC(t^3)$ parametrized by $t = t^a \sfw_a \in W$, i.e.\ a (generalized) {\em Goursat parameterization}.

 Computing symmetries of PDE \cite{Olv1995,KLR2007} is algorithmic, but it is virtually impossible for most of our PDE $\cE$ and $\cF$ using standard techniques (even with the aid of computer algebra).  In stark contrast, symmetries of $\cV$ can be efficiently computed by-hand (Theorem \ref{T:sym-flat}) and uniform formulas for $\fg$ represented as contact vector fields are given in Table \ref{F:explicit-sym}.  These make explicit some statements made in \cite{Car1893-German}, e.g.\ Cartan briefly writes:
 {\em ``Endlich habe ich eine einfache 248-gliedrige Ber\"uhrungstransformationsgruppe $G_{248}$ in $R_{29}$ gefunden.''}  (Cartan is actually referring to a representation of $E_8$ on the 57-dimensional contact manifold $E_8 / P_8$; this has local coordinates $(x^i,u,u_i)$, and $R_{29}$ refers to the coordinates $(x^i,u)$, despite the fact that there is no natural fibration.)  Our  formulas generalize those of \cite[\S 4.4]{LNS2016} for $G_2$ and $B_3$ obtained via $[\sfQ]$.  Similar uniform descriptions appeared in work of G\"unaydin and Pavlyk \cite[\S 4.1]{GP2009}.  Our approach identifies a rich geometric / PDE perspective underlying these descriptions.

The canonical distribution $\cC^{(1)}$ on $M^{(1)}$ induces a distribution $\cD$ on $\cE$.  The tableau associated to $(\cE,\cD)$ is involutive (in the sense of Cartan--K\"ahler) only in the $G_2$ or $B_3$ cases (Theorem \ref{T:involutive}). 
 Also, $(\cE,\cD)$ has infinite-dimensional ({\em internal}) symmetry algebra because of a rank one distribution $\Ch(\cD)$ of {\em Cauchy characteristics}, i.e.\ symmetries of $(\cE,\cD)$ contained in $\cD$ itself.  The (local) leaf space $\overline\cE = \cE / \Ch(\cD)$ inherits a distribution $\overline\cD$ (see \eqref{E:D-red}), which can be expressed\footnote{The expression \eqref{E:Monge} is only a mnemonic device: ``symmetries'' refer to internal symmetries of $(\overline\cE,\overline\cD)$, independent of $J^{1,2}$.} as the mixed order, vector PDE $\overline\cE \subset J^{1,2}(\bbC^{n-1},\bbC^2)$:
 \begin{align} \label{E:Monge}
 Z_a = \frac{3}{2} \fC_a(T^2), \quad U_{ab} = 3\fC_{ab}(T), \qbox{where} T \in W.
 \end{align}
 Here, we regard $Z,U$ as functions of $X^a$, and $Z_a$, $U_{ab}$ refer to $\frac{\partial Z}{\partial X^a}$, $\frac{\partial^2 U}{\partial X^a \partial X^b}$.  The PDE \eqref{E:Monge} provides a fifth model with symmetry $\fg$, and generalizes the Hilbert--Cartan equation in the $G_2$ case, which is a {\em second-order Monge equation}.  (See \cite{ANN2015} for Monge geometries of {\em first-order}.)  All solutions to \eqref{E:Monge} are given in \S \ref{S:Monge}, and these lead to solutions of $\cE$.
Involutivity in the $G_2$ or $B_3$ cases leads to solutions depending on one or two {\em functions} of one variable respectively, but only on arbitrary {\em constants} in the general case.
  
 While the PDE $\cF$ and $\cE$ in the flat case are indeed those implicitly referred to by Yamaguchi, this is a priori not clear since we obtained these in a completely different manner via fibrewise constructions on $\cV$.  This is discussed in \S \ref{S:Cauchy} and \S \ref{S:Goursat} where the associated reduction theory is illustrated in detail.  In particular, $(\overline\cE,\overline\cD)$ is the flat model for the reduced geometries identified by Yamaguchi.  Most of these geometries are {\em rigid}: only the $G_2$ and $B_3$ cases admit curved deformations.

 Following our initial arXiv post of this article, other (hypersurface) PDE with symmetry $\fg$, alternative to our $\cF$, were found \cite{AGMM2016}.  While these are equivalent representations of the flat $G$-contact structure, their relationship to the sub-adjoint variety field $\cV$ is unclear.  Uncovering such natural geometric constructions would allow these new PDE to be written explicitly, analogous to what we have done here.  The reduction theory for these PDE would be an interesting topic for investigation.

 In \S \ref{S:degenerate}, we discuss the geometry associated with the exceptional type $A$ and $C$ cases.  We have:
 \begin{itemize}
 \item $u_{ij} = 0,\,\, 1 \leq i,j \leq n$ has point symmetry $A_{n+1}$, i.e.\ the flat $A_{n+1}$-contact structure.
 \item $u_{ijk} = 0,\,\, 1 \leq i,j,k \leq n$ has contact symmetry $C_{n+1}$, i.e.\ the flat $C_{n+1} / P_{1,n+1}$ structure.
 \end{itemize}
 Via a twistor correspondence \cite{Cap2005}, the latter can be viewed as the flat $C_{n+1}$-contact structure.  Indeed, all (complex) parabolic contact structures admit a description in terms of PDE.
  
 All $G$-contact structures are non-rigid geometries and we briefly discuss the non-flat case in \S \ref{S:non-flat}.  For $G_2$-contact structures (\S \ref{S:G2P2}), we give a formula for the harmonic curvature and give some symmetry classification results. We then conclude with some submaximally symmetric models in the general case (\S \ref{S:submax-sym} and \S \ref{S:AC-submax}).  In general, the PDE $\cE$ and $\cF$ for non-flat $G$-contact structures do not satisfy the Cartan--Yamaguchi reduction criteria, which explains the absence of $\cV$ in their story.
 
 \section*{Acknowledgements}
 
 We thank I.\ Anderson, R.\ Bryant, L.\ Manivel, and K.\ Sagerschnig for helpful discussions.  An extremely useful tool throughout this project was the {\tt DifferentialGeometry} package in {\tt Maple}.  D.T. was supported by a Lise Meitner Fellowship (project M1884-N35) of the Austrian Science Fund (FWF).
 
 \section*{Conventions}  
 
 We will work exclusively with complex Lie groups and Lie algebras, complex manifolds and jet-spaces, etc.  (However, all our results are analogously true for split-real forms.)
 
 Given a rank $\ell$ complex simple Lie algebra $\fg$, a Borel subalgebra is assumed fixed.  Let $\fh$ be the Cartan subalgebra, root system $\Delta \subset \fh^*$, simple roots $\Delta^0 = \{ \alpha_i \}_{i=1}^\ell$ (use the Bourbaki / {\tt LiE} ordering), and dual basis $\{ \sfZ_i \}_{i=1}^\ell \subset \fh$.  Let $\fg_\alpha$ be the root space for $\alpha \in \Delta$.  Let $\{ \lambda_i \}_{i=1}^\ell$ be the fundamental weights.
 
 A parabolic subalgebra $\fp \subset \fg$ is marked by crosses on the nodes $I_\fp = \{ i : \fg_{-\alpha_i} \not\subset \fp \} \subset \{ 1, ..., \ell \}$ of the Dynkin diagram of $\fg$.  A parabolic subgroup $P \subset G$ with Lie algebra $\fp$ is denoted by $P_{I_\fp}$.  (For the closed $G$-orbit $G/P  \inj \bbP(\bbV)$, where the $G$-irrep $\bbV$ has highest weight $\lambda = \sum_{i=1}^\ell r_i(\lambda) \lambda_i$, we have $I_\fp = \{ i : r_i(\lambda) \neq 0 \}$.)  The grading element $\sfZ = \sum_{i \in I_\fp} \sfZ_i$ gives a grading $\fg = \bop_{k=-\nu}^\nu \fg_k$, where $\fg_k = \bigoplus_{\sfZ(\alpha) = k} \fg_\alpha$, with $\fp = \fg_{\geq 0}$.
 
 \section{Sub-adjoint varieties and natural constructions}
 \label{S:SA}
 
 \subsection{The Lagrangian-Grassmannian}
 \label{S:LG}
 
 Let $n \geq 2$ and let $(V,\eta)$ be a $2n$-dimensional symplectic vector space.  A subspace $L \subset V$ is {\em Lagrangian} if $\dim(L) = n$ and $\eta|_L \equiv 0$.  The Lagrangian-Grassmannian $\tLG(V)$ consists of all such subspaces and depends only on the conformal class $[\eta]$.  The Lie groups $\Sp(V)$ and $\CSp(V)$ consist of linear transformations of $V$ that preserve $\eta$ and $[\eta]$ respectively.  These act transitively on the manifold $\tLG(V)$.  Since $T_L( \tLG(V) ) \cong S^2 L^*$, then $\dim(\tLG(V)) = \binom{n+1}{2}$.
 
 A basis $\{ \sfe_1,..., \sfe_{2n} \}$ of $(V,\eta)$ is {\em conformal symplectic} (a ``{\em CS-basis}'') if $\eta$ is represented in this basis by a multiple of $\begin{psmallmatrix} 0 & \id_n\\ -\id_n & 0 \end{psmallmatrix}$.  Then 
 $\fsp_{2n} = \left\{ \begin{psmallmatrix} a & b\\ c & -a^\top\end{psmallmatrix} : a,b,c \in \tMat_{n\times n};\, b,c \mbox{ symmetric} \right\}$.
 Now $o = \tspan\{ \sfe_1, ..., \sfe_n \}$ has stabilizer $P_n =  \left\{ \begin{psmallmatrix} A & B \\ 0 & (A^\top)^{-1} \end{psmallmatrix} : A^{-1} B \mbox{ symmetric} \right\} \subset \Sp(V)$ with Lie algebra $\fp_n \subset \fsp_{2n}$.  We obtain {\em standard coordinates} about $o$ by mapping the symmetric matrix $X$ to $\tspan\{ \sfe_i + X_{ij} \sfe_{n+j} \}_{i=1}^n$.
 
 For $g = \begin{psmallmatrix} A & B \\ C & D \end{psmallmatrix} \in \CSp(V)$ near the identity,  $ \begin{psmallmatrix} A & B \\ C & D \end{psmallmatrix} \cdot \begin{psmallmatrix} I & 0 \\ X & I \end{psmallmatrix} / P_n = \begin{psmallmatrix} I & 0 \\ \widetilde{X} & I \end{psmallmatrix} / P_n$, where
 \begin{align} \label{E:Mobius}
 \widetilde{X} = (C+DX)(A+BX)^{-1}.
 \end{align}

 \subsection{Adjoint and sub-adjoint varieties}
 \label{S:SA-var}
 
 Let $G$ be a (connected) complex simple Lie group with Lie algebra $\fg$.  The unique closed $G$-orbit $G/P \cong G^{\ad} \inj \bbP(\fg)$ is the adjoint variety of $G$.  This is a complex contact manifold except when $G = A_1$ (henceforth excluded).  Otherwise, the reductive part $G_0 \subset P$ induces a $G_0$-invariant {\em contact grading} on $\fg$, induced by a grading element $\sfZ \in \fz(\fg_0)$ (see ``Conventions''): 
 \begin{itemize}
 \item $\fg = \fg_{-2} \op \fg_{-1} \op \fg_0 \op \fg_1 \op \fg_2$, where $\fp = \fg_{\geq 0}$ and $(\fg_{-k})^* \cong \fg_k$ for $k \neq 0$ (via the Killing form);
 \item $[\fg_i,\fg_j] \subset \fg_{i+j}$ for $i,j \in \bbZ$ (take $\fg_i = 0$ for $|i| > 2$);
 \item $\fg_-$ is a Heisenberg algebra, i.e.\ $\dim(\fg_{-2}) = 1$ and the bracket $\eta : \bigwedge^2 \fg_{-1} \to \fg_{-2}$ is non-degenerate.
 \end{itemize}
 In particular, $V = \fg_{-1}$ is a CS-vector space and $G_0 \subset \CSp(V)$.  We have that $V$ is $G_0$-irreducible iff $G \neq A_\ell$; also, $\fg_0 \neq \fcsp(V)$ iff $G \neq C_\ell$.
 
 For $G \neq A_\ell,C_\ell$, we have $\lambda = \lambda_j$ (i.e.\ $j$ is the ``contact node''), $P = P_j$ is maximal parabolic, and $\fg_0 = \fz(\fg_0) \op \fg_0^{\Ss}$ with $\fz(\fg_0)$ spanned by $\sfZ = \sfZ_j$.  The {\em sub-adjoint variety} $\cV$ for $G$ is the unique closed $G_0$-orbit in $\bbP(V)$.   The stabilizer in the semisimple part $F = G_0^{\Ss}$ of the highest weight line $l_0 \in \cV \subset \bbP(V)$ is a parabolic subgroup $Q \subset F$, and this induces a $|1|$-grading $\ff = \ff_{-1} \op \ff_0 \op \ff_1$ with $\fq = \ff_0 \op \ff_1$.  Furthermore, $\cV \subset \bbP(V)$ is smooth, irreducible, and {\em Legendrian}, i.e.\ $\widehat{T}_l \cV \in \tLG(V)$ at any $l \in \cV$.  Here, the {\em affine tangent space} $\widehat{T}_l \cV \subset V$ is the span of $l$ and the tangent space to the cone over $\cV$ at any nonzero point along $l$.

 \begin{table}[h]
 \[
 \begin{array}{|cccccc|} \hline
 G / P & \mbox{Range} & F / Q & \begin{array}{c} V = \fg_{-1} \mbox{ as} \\ \mbox{an } \ff\mbox{-module} \end{array}& \dim(V) & \cV \subset \bbP(V)\\ \hline\hline
 B_\ell / P_2 & \begin{array}{c} \ell \geq 4\\ \ell=3 \end{array} & \begin{array}{l} A_1/P_1 \times B_{\ell-2}/P_1\\ A_1 / P_1 \times A_1 / P_1 \end{array} & \begin{array}{@{\quad\,}l} \bbC^2 \boxtimes \bbV_{\lambda_1}\\ \bbC^2 \boxtimes S^2\bbC^2 \end{array} & 2(2\ell-3) & \Seg(\bbP^1 \times \bbQ^{2\ell-5})\\  \hline
 D_\ell / P_2 & \ell \geq 5 & A_1/P_1 \times D_{\ell-2}/P_1 & \bbC^2 \boxtimes \bbV_{\lambda_1} & 2(2\ell-4) & \Seg(\bbP^1 \times \bbQ^{2\ell-6})\\ \hline
 G_2 / P_2 & - & A_1/P_1 & S^3 \bbC^2 & 4 & \mbox{twisted cubic } \nu_3(\bbP^1) \\
 D_4 / P_2 & - & (A_1/P_1)^3 & \bbC^2 \boxtimes \bbC^2 \boxtimes \bbC^2 & 8 & \Seg(\bbP^1 \times \bbP^1 \times \bbP^1) \\
 F_4 / P_1 & - & C_3/P_3 & \bbV_{\lambda_3} & 14 & \tLG(3,6)\\
 E_6 / P_2 & - & A_5/P_3 & \bbV_{\lambda_3} & 20 & \tGr(3,6)\\
 E_7 / P_1 & - & D_6/P_6 & \bbV_{\lambda_6} & 32 & \mbox{$D_6$-spinor variety} \\
 E_8 / P_8 & - & E_7/P_7 & \bbV_{\lambda_7} & 56 & \mbox{Freudenthal variety} \\ \hline
 \end{array}
 \]
 \caption{Sub-adjoint varieties}
 \label{F:subadjoint}
 \end{table}

 We can arrive at Table \ref{F:subadjoint} in a uniform manner via the Dynkin diagram $\fD(\fg)$:
 \begin{itemize}
 \item Given $P = P_j \subset G$, remove the contact node $j$ from $\fD(\fg)$ to obtain $\fD(\ff)$.
 \item For every node $i$ connected to $j$ in $\fD(\fg)$: inscribe a 1 over $i$ if the bond is simple or is directed from $i$ to $j$; otherwise inscribe the multiplicity of the bond.  This yields $V = \fg_{-1}$ as an $\ff$-module.
 \item Crossed nodes for $Q \subset F$ correspond to the neighbouring nodes $N(j)$ to $j$ in $\fD(\fg)$.
 \end{itemize}
 
 \begin{example} \label{EX:G2P2} \raisebox{-0.05in}{\includegraphics[width=2.5cm]{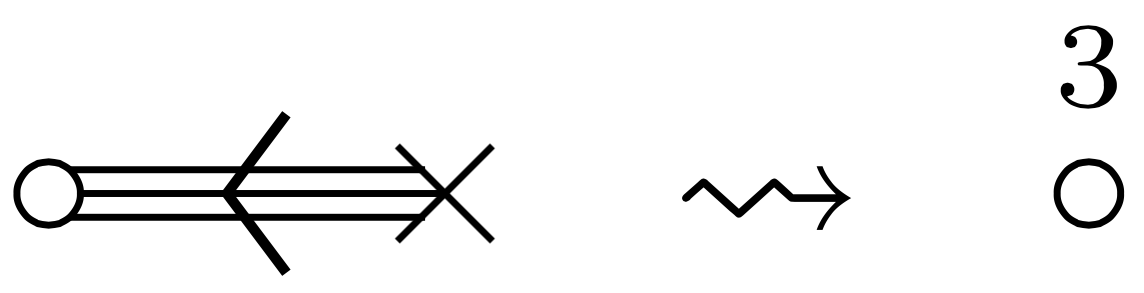}} \quad indicates that for $G_2 / P_2$, $V = \fg_{-1} \cong S^3 \bbC^2$ as an irrep of $A_1 \cong \fsl_2$.
 \end{example}
 
 We have several naturally associated objects inheriting $G_0$-invariance from  $\cV \subset \bbP(V)$:
 \begin{enumerate}
 \item   Let $\widehat\cV$ denote the image of the embedding $\cV \to \tLG(V)$ given by $l \mapsto \widehat{T}_l \cV$.

 \item Let $\widetilde\cV := \bigcup_{l \in \cV} \{ L \in \tLG(V) : l \subset L \} \subsetneq \operatorname{LG}(V)$.  (This is a hypersurface.)

 \item The tangential variety $\tau(\cV) = \bigcup_{l \in \cV} \bbP(\widehat{T}_l \cV) \subset \bbP(V)$ is a {\em quartic hypersurface}, so $\tau(\cV) = \{ \sfQ = 0 \}$ for some symmetric tensor $\sfQ \in S^4 V^*$.  Let $[\sfQ] = \{ c \sfQ : c\neq 0 \}$ denote its conformal class.
 \end{enumerate}
 
 \begin{example}[$G_2 / P_2$] \label{EX:G2-1} Here, $V = \fg_{-1} \cong S^3 \bbC^2$ as a module for $\fg_0 \cong \fgl_2$.   Let $\bbC^2 = \tspan\{ \sfr,\sfs \}$, so $\fgl_2$ is spanned by
 $\sfI = \sfr \partial_\sfr + \sfs \partial_\sfs,\,
 \sfE = \sfr \partial_\sfs, \,
 \sfH = \sfr \partial_\sfr - \sfs \partial_\sfs, \,
 \sfF = \sfs \partial_\sfr$.  Then $V$ has a $\tGL_2$-invariant CS-form $[\eta]$, where
 \begin{align} \label{E:G2-eta}
 \eta( f,g ) := \frac{1}{3!} (f_{\sfr\sfr\sfr} g_{\sfs\sfs\sfs} - 3 f_{\sfr\sfr\sfs} g_{\sfs\sfs\sfr} + 3 f_{\sfr\sfs\sfs} g_{\sfs\sfr\sfr} - f_{\sfs\sfs\sfs} g_{\sfr\sfr\sfr}).
 \end{align}
The twisted cubic $\cV = \{ [v^3] : [v] \in \bbP^1 \} \subset \bbP(V)$ is $\tGL_2$-invariant.  In $V$, differentiating $\gamma(t) = (\sfr+t\sfs)^3$ at $t=0$ yields the osculating sequence $V^0 \subset V^{-1} \subset V^{-2} \subset V^{-3} = V$, where $V^0 = \tspan\{ \sfr^3 \}$, $V^{-1} := \widehat{T}_{[x^3]} \cV = \tspan\{ \sfr^3, \sfr^2 \sfs \}$ is Legendrian, and $V^{-2} = \tspan\{ \sfr^3, \sfr^2\sfs, \sfr \sfs^2 \}$.
 In the dual basis $\theta^1, \theta^2, \theta^3, \theta^4$ to $(\sfr^3,3\sfr^2\sfs,3\sfr\sfs^2,\sfs^3)$, we have $\eta = 6(\theta^1 \wedge \theta^4 - 3 \theta^2 \wedge \theta^3)$.  The discriminant of $f = a_1 \sfr^3 + 3 a_2 \sfr^2 \sfs + 3 a_3 \sfr \sfs^2 + a_4 \sfs^3$ is:
 \begin{align} \label{E:G2-Q}
 \sfQ = (\theta^1)^2 (\theta^4)^2 - 6 \theta^1 \theta^2 \theta^3 \theta^4 + 4 \theta^1 (\theta^3)^3 + 4 (\theta^2)^3 \theta^4 - 3 (\theta^2)^2 (\theta^3)^2,
 \end{align}
 and this is conformally $G_0$-invariant. The locus $\sfQ = 0$ consists of all binary cubics with a multiple root.
 \end{example}
 
 When $G = D_4$, $\sfQ$ is Cayley's hyperdeterminant.
 
 \begin{lemma} If $G \neq A_\ell, C_\ell$, then $\ff = \fg_0^{\Ss} \subsetneq \fsp(V)$ is a maximal subalgebra.
 \end{lemma}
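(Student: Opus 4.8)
The plan is to reduce the maximality assertion to an irreducibility statement about $\fsp(V)$ regarded as an $\ff$-module, and then to read off that module structure from the Legendrian geometry of $\cV$. First I would record the elementary reduction: if $\ff \subseteq \fm \subseteq \fsp(V)$ is any intermediate subalgebra, then $[\ff,\fm]\subseteq\fm$ exhibits $\fm$ as an $\ff$-submodule, so $\fm/\ff$ is an $\ff$-submodule of $\fsp(V)/\ff$. Hence it suffices to prove that $\fsp(V)/\ff$ is an \emph{irreducible} $\ff$-module, for then $\fm/\ff\in\{0,\fsp(V)/\ff\}$ forces $\fm\in\{\ff,\fsp(V)\}$. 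The exclusion $G\neq C_\ell$ guarantees $\ff\subsetneq\fsp(V)$ (so ``maximal'' is meaningful), while $G\neq A_\ell$ guarantees that $V=\bbV_\lambda$ is $\ff$-irreducible, so that $\cV=F/Q\hookrightarrow\bbP(V)$ is the minimal embedding of an irreducible representation and Kostant's theorem applies below.

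To identify $\fsp(V)/\ff$ I would use the $\ff$-equivariant isomorphism $\fsp(V)\cong S^2V^*$ furnished by $\eta$, namely $X\mapsto q_X:=\tfrac12\,\eta(X\,\cdot\,,\,\cdot\,)$ (symmetric precisely because $X\in\fsp(V)$). Restriction of quadrics to $\cV$ gives an $\ff$-split exact sequence
\begin{equation*}
0 \to I_2(\cV) \to S^2V^* \to H^0(\cV,\cO(2)) \to 0,
\end{equation*}
with surjectivity coming from projective normality of $\cV$; and by Kostant (Borel--Weil), the homogeneous coordinate ring of $\cV=F/Q\hookrightarrow\bbP(\bbV_\lambda)$ is $\bigoplus_k\bbV^*_{k\lambda}$, so $H^0(\cV,\cO(2))=\bbV^*_{(2\lambda)}$ is the \emph{irreducible} Cartan component. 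The key geometric input is that $X\leftrightarrow q_X$ carries the space of quadrics through $\cV$ onto the symplectic symmetry algebra $\fa:=\{X\in\fsp(V):X\cdot\widehat\cV\subseteq\widehat\cV\}$. Indeed, since $\cV$ is Legendrian each $\widehat T_v\cV$ is Lagrangian: if $X$ preserves the cone then $v,Xv\in\widehat T_v\cV$, so $q_X(v)=\tfrac12\eta(Xv,v)=0$ on $\widehat\cV$; conversely, if $q_X\in I_2(\cV)$ then $dq_X$ annihilates the Lagrangian $\widehat T_v\cV$, whence the Hamiltonian field $X=\eta^{-1}dq_X$ is tangent to $\widehat\cV$. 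Thus $I_2(\cV)\cong\fa$.

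Granting the equality $\ff=\fa$ (discussed next), the isomorphism $X\leftrightarrow q_X$ sends $\ff$ onto $I_2(\cV)$, so
$\fsp(V)/\ff\cong S^2V^*/I_2(\cV)=H^0(\cV,\cO(2))=\bbV^*_{(2\lambda)}$
is irreducible, and the reduction above yields maximality. As a consistency check one reads off $\dim\bbV_{(2\lambda)}=\dim\fsp(V)-\dim\ff$ from Table~\ref{F:subadjoint}: for $G_2$ it is $S^6\bbC^2$ of dimension $7=10-3$, and for $D_4$ it is $(S^2\bbC^2)^{\boxtimes 3}$ of dimension $27=36-9$.

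The main obstacle is exactly the equality $\ff=\fa$, i.e.\ that $\cV$ carries no symplectic symmetries beyond $\ff$; only the inclusion $\ff\subseteq\fa$ is immediate from $G_0\subset\CSp(V)$. This is genuinely the crux, since $\fa$ is itself an intermediate subalgebra---the Poisson bracket of two quadrics vanishing on the Lagrangian cone $\widehat\cV$ again vanishes there---so $\ff\subsetneq\fa$ would contradict the lemma. I would settle $\ff=\fa$ using the classical determination of $\Aut(\cV\subset\bbP(V))$ for the sub-adjoint varieties of Table~\ref{F:subadjoint}: in every case $\Aut^0(\cV)$ is the adjoint group of $\ff$, the only extra automorphisms (the $S_3$ permuting the factors of $\Seg(\bbP^1\times\bbP^1\times\bbP^1)$ for $D_4$, or the duality of $\tGr(3,6)$ for $E_6$) being \emph{finite} and hence invisible to Lie algebras. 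Equivalently, one verifies $\dim\fa=\dim I_2(\cV)=\dim\ff$ case-by-case, which pins down $\fa=\ff$ and completes the argument.
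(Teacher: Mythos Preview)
Your proof is correct and takes a genuinely different route from the paper's. The paper simply invokes Dynkin's classification of maximal subalgebras of classical Lie algebras: since $V$ is $\ff$-irreducible, the inclusion $\ff\hookrightarrow\fsp(V)$ is an irreducible subalgebra, and one checks against Dynkin's lists that in the non-simple case ($G=B_\ell,D_\ell$) one has $\ff=\fsp(\bbC^2)\times\fso_m$ acting on $\bbC^2\otimes\bbC^m$, while in the simple (exceptional) case $\ff$ avoids the finitely many tabulated exceptions. This is a two-line appeal to a heavy classification theorem.

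Your argument is more geometric and arguably more illuminating: the identification $\fsp(V)\cong S^2V^*$ together with projective normality and Borel--Weil gives $\fsp(V)/I_2(\cV)\cong\bbV^*_{2\lambda}$ irreducible, and the Legendrian property of $\cV$ yields the elegant identification $I_2(\cV)\cong\fa$. This connects maximality directly to the quadratic ideal of the sub-adjoint variety and explains \emph{why} the quotient is a Cartan square. The cost is that you still need the input $\ff=\fa$, which you handle by appeal to the (classical but nontrivial) determination of $\Aut^0(\cV\subset\bbP(V))$ for each variety in Table~\ref{F:subadjoint}, or equivalently a case-by-case dimension count $\dim\bbV_{2\lambda}=\dim\fsp(V)-\dim\ff$. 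So both proofs ultimately rest on an external case check; yours trades Dynkin's tables for the automorphism groups of Legendrian varieties, which fits the paper's theme better but is not strictly more self-contained.
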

 
 \begin{proof}
 There are no proper $\ff$-invariant subspaces of $V$, so the inclusion $\ff \inj \fsp(V)$ is {\em irreducible}.   From Dynkin \cite{Dyn1957} (see also \cite[Chp.\ 6, Thms.\ 3.1--3.3]{OV1994}), the maximal subalgebras $\fm \inj \fsp(V)$ are:
 \begin{itemize}
 \item $\fm$ non-simple: $\fsp(V) = \fsp(\bbV_1 \otimes \bbV_2)$ and $\fm = \fsp(\bbV_1) \times \fso(\bbV_2)$, where $d_i = \dim(\bbV_i)$ satisfy $d_1 \geq 2$; $4 \neq d_2 \geq 3$ or $(d_1,d_2) = (2,4)$.  This is true for $\ff$ when $G = B_\ell$ or $D_\ell$ with $d_1 = 2$.
 \item $\fm$ simple: Aside from the exceptions in \cite[Table 7]{OV1994}, all non-trivial irreps $\psi : \fm \to \fsp(V)$ yield $\psi(\fm) \subset \fsp(V)$ a maximal subalgebra.  This is true for $\ff$ when $G$ is exceptional.
 \end{itemize}
 \end{proof}

 \begin{prop} \label{P:g0-reduction}
 Given the sub-adjoint variety $\cV \subset \bbP(V)$ for $G \neq A_\ell, C_\ell$, any of $\cV$, $\widehat\cV$, $\widetilde\cV$, or $[\sfQ]$ reduces the structure algebra $\fcsp(V)$ to $\fg_0$.
 \end{prop}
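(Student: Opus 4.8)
The plan is to show, for each of the four objects separately, that its infinitesimal stabilizer in $\fcsp(V)$ is exactly $\fg_0$. The structural backbone is the splitting $\fcsp(V) = \fsp(V) \op \bbC\,\id_V$ together with the observation that the grading element $\sfZ$ acts on $V = \fg_{-1}$ as $-\id_V$; hence inside $\fcsp(V)$ one has $\fg_0 = \ff \op \bbC\,\id_V$ with $\ff = \fg_0^{\Ss} \subset \fsp(V)$, which is maximal by the preceding Lemma.

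First I would record the two easy inclusions. Each object is $G_0$-invariant, hence $\fg_0$-invariant, so $\fg_0$ lies in the stabilizer $\fs$. Moreover the scaling $\id_V$ stabilizes every object for trivial reasons: it induces the zero vector field on $\bbP(V)$ and on $\tLG(V)$ (scaling fixes every line and every subspace), and it merely rescales $\sfQ$, hence fixes the conformal class $[\sfQ]$. Thus $\bbC\,\id_V \subseteq \fs$, and writing $\fs = \fs_0 \op \bbC\,\id_V$ with $\fs_0 := \fs \cap \fsp(V)$, the problem reduces to showing $\fs_0 = \ff$. Since $\ff \subseteq \fs_0$ and $\ff$ is maximal in $\fsp(V)$, we obtain the dichotomy $\fs_0 = \ff$ or $\fs_0 = \fsp(V)$, and everything comes down to excluding the second alternative.

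The exclusion splits into two flavors. For the three subvarieties $\cV \subset \bbP(V)$ and $\widehat\cV, \widetilde\cV \subset \tLG(V)$, I would invoke transitivity: $\Sp(V)$ acts transitively on $V \setminus \{0\}$ (hence on $\bbP(V)$) and on $\tLG(V)$, so the stabilizer in $\Sp(V)$ of a proper nonempty closed subvariety is a proper subgroup, and correspondingly its infinitesimal stabilizer $\fs_0$ is a proper subalgebra of $\fsp(V)$. Each of our subvarieties is proper and nonempty: $\cV$ and $\widehat\cV \cong \cV$ are Legendrian of dimension $n-1 < \binom{n+1}{2} = \dim\tLG(V)$, while $\widetilde\cV$ is a hypersurface. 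Hence $\fs_0 \neq \fsp(V)$, forcing $\fs_0 = \ff$. For the conformal quartic $[\sfQ]$ the argument is representation-theoretic: if all of $\fsp(V)$ preserved $[\sfQ]$, then $X \cdot \sfQ = \chi(X)\,\sfQ$ for a linear character $\chi$ of $\fsp(V)$; but $\fsp(V)$ is simple, hence perfect, so $\chi \equiv 0$ and $\sfQ$ would be a nonzero $\fsp(V)$-invariant element of $S^4 V^*$. Since $S^4 V^* \cong S^4 V$ is an irreducible, nontrivial $\fsp(V)$-module (symmetric powers of the standard symplectic representation are irreducible), it contains no nonzero invariant, a contradiction; hence $\fs_0 = \ff$ again.

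I expect the only real content to lie in this exclusion step, and in keeping the two flavors cleanly separated. The minor points that deserve care are: verifying that the three subvarieties are genuinely proper and nonempty (in particular that $\widetilde\cV$ is a hypersurface, as asserted at its definition, and that $l \mapsto \widehat{T}_l\cV$ is an embedding, so that $\widehat\cV \cong \cV$), and confirming the irreducibility of $S^4 V$ under $\fsp(V)$ (which holds precisely because the symplectic form is antisymmetric, so no contraction can lower the symmetric degree to produce a trivial summand). Once $\fs_0 = \ff$ is established in all four cases, we conclude $\fs = \ff \op \bbC\,\id_V = \fg_0$, which is the asserted reduction.
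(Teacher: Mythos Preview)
Your argument is correct and follows essentially the same route as the paper: both use the splitting $\fcsp(V)=\bbC\times\fsp(V)$ with $\bbC=\fz(\fg_0)$ and then invoke maximality of $\ff\subset\fsp(V)$ from the preceding Lemma. The paper's proof is terse and leaves the exclusion of $\fs_0=\fsp(V)$ implicit (``the result follows''), whereas you supply this step explicitly via transitivity of $\Sp(V)$ on $\bbP(V)$ and $\tLG(V)$ for the three subvarieties, and via irreducibility of $S^4V$ under $\fsp(V)$ for $[\sfQ]$; this extra care is a harmless elaboration rather than a different approach.
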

 
 \begin{proof} Let $\fs \subset \fcsp(V)$ be the Lie algebra of the stabilizer of any of the given objects, so $\fg_0 \subset \fs$.  We have $\fcsp(V) = \bbC \times \fsp(V)$ with $\bbC = \fz(\fg_0)$.  Since $\ff \subset \fsp(V)$ is maximal, the result follows.
 \end{proof}

 \subsection{Jordan algebras and sub-adjoint varieties}
 \label{S:LM}
 
 Sub-adjoint varieties $\cV \subset \bbP(V)$ admit a remarkably uniform description in terms of Jordan algebras, which we review here.
 
 Fixing $l_0 \in \cV$, we have $\cV \cong F/Q$.  Let $V^0 \subset V^{-1} \subset V^{-2} \subset ... \subset V = \fg_{-1}$ be the corresponding ($Q$-invariant) osculating sequence at $l_0$.  (In particular, $V^{-1} = \widehat{T}_{l_0} \cV$.)  In all our cases, $V^{-3} = V$.  This filtration has as its associated-graded $\tgr(V) = \bop_{i \leq 0} V_i$, where $V_i := V^i / V^{i+1}$, and this is naturally an $F_0$-module.  Since $\ff = \ff_{-1} \op \fq$ and $V^{-1} = \ff \cdot l_0$, then the (intrinsic) tangent space $T_{l_0} \cV \cong T_o (F/Q) \cong \ff_{-1} \cdot l_0$ is identified with $W:= V_{-1}$ as $\ff_0$-modules.

 In \cite[\S 5.1]{LM2001}, Landsberg and Manivel gave the following $\ff_0^{\Ss}$-module descriptions\footnote{In \cite[\S 5.1]{LM2001}, note that our $\ff,\ff_0$ are their $\fg,\fl$ respectively.  Also, while \cite{LM2001} mainly concentrates on the exceptional cases, the first sentence of \cite[p.496]{LM2001} indicates that the spin factor cases similarly satisfy \eqref{E:f-decomp}--\eqref{E:V-decomp}.
} of $\ff$ and $V$:
 \begin{align}
 \ff &= \ff_{-1} \op \ff_0 \op \ff_1 \cong W \op \ff_0 \op W^*, \label{E:f-decomp}\\
 V &\cong V_0 \op V_{-1} \op V_{-2} \op V_{-3} \cong \bbC \op W \op W^* \op \bbC, \label{E:V-decomp}
 \end{align}
 where $W$ is the (complex) Jordan algebra\footnote{The Jordan {\em algebra} structure will not play any explicit role in this article.  Instead, $\fC$ will play a fundamental role.}  corresponding to $G$ (Table \ref{F:sym}), which admits a natural cubic form $\fC \in S^3 W^*$ with symmetry algebra $\ff_0^{\Ss}$ (Table \ref{F:extended-magic}). Such $W$ and $\fC$ are given below:
 \begin{enumerate}
 \item[(i)] $3 \times 3$ $\bbA$-hermitian matrices $W = \cJ_3(\bbA)$, where $\bbA$ is a {\em complex} composition algebra, i.e.\ $\bbA = \bbA_\bbR \otimes_\bbR \bbC$ where $\bbA_\bbR$ is $\underline{0}$ (trivial algebra) or $\bbR, \bbC, \bbH, \bbO$.  Here, $\fC(t^3) = \det(t)$ is the determinant, defined via the Cayley--Hamilton identity (see \cite[eq (5.7)]{SV2000}): $ t^3 - \tr(t) t^2 - \frac{1}{2}( \tr(t^2) - \tr(t)^2) t - \det(t) \id = 0.$
 \item[(ii)] $W = \GJ := \bbC$ equipped with $\fC(t^3) = \frac{t^3}{3}$.
 \item[(iii)] Spin factor $W = \mathcal{J\!S}_m := \bbC^m \op \bbC$, $m \geq 1$, where $\bbC^m$ carries a non-degenerate symmetric bilinear form $\langle \cdot, \cdot \rangle$.  Here, $\fC(t^3) = \langle v,v \rangle \lambda$, where $t = (v,\lambda)$.
 We will often use an adapted basis: Let $\sfw_\infty = 1$ span the $\bbC$-factor and pick a basis $\{ \sfw_\sfa \}_{\sfa=1}^m$ of $\bbC^m$ with $\fC_{\infty \sfa\sfb} = \langle \sfw_\sfa, \sfw_\sfb \rangle = \delta_{\sfb,\sfa'}$, where $\sfa' := m+1 - \sfa$.  
 \end{enumerate}

 \begin{table}[h]
 \[
 \begin{array}{|c|cccccccc|} \hline
 &&& & & A_1 & A_2 & C_3 & F_4\\
 \ff_0^{\Ss} & B_{\ell-3} & D_{\ell-3} && & A_2 & A_2 \times A_2 & A_5 & E_6\\
 \ff = \fg_0^{\Ss} & A_1 \times B_{\ell-2} & A_1 \times D_{\ell-2} & A_1 & A_1 \times A_1 \times A_1 & C_3 & A_5 & D_6 & E_7\\
 \fg & B_\ell & D_\ell & G_2 & D_4 & F_4 & E_6 & E_7 & E_8\\ \hline
 \end{array}
 \]
 \caption{A magic rectangle}
 \label{F:extended-magic}
 \end{table}
 
 On $V \cong \tgr(V)$, we have the structure of a graded $\ff_0^{\Ss}$-algebra \cite[Cor.3.8]{LM2003} (induced from $\cV$ and the choice of $l_0$). The non-degenerate pairing $V_{-1} \times V_{-2} \to V_{-3} \cong \bbC$ then identifies $V_{-2} \cong W^*$, while $\fC$ arises from the (symmetric) pairing $V_{-1} \times V_{-1} \to V_{-2}$. The highest weight of $W \cong \ff_{-1}$ as a $\ff_0^{\Ss}$-module is obtained from $F/Q$ analogous to how the highest weight of $V = \fg_{-1}$ as $\fg_0^{\Ss}$-module was obtained from $G/P$.
 
 \begin{lemma} \label{L:C-inj}
 $\fC : W \to S^2 W^*$ is injective.
 \end{lemma}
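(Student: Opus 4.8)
The plan is to recast the statement as a contraction map and then exploit the $\ff_0^{\Ss}$-symmetry of $\fC$. Write $\Phi\colon W \to S^2 W^*$ for the map $w \mapsto \fC(w,\cdot,\cdot)$ obtained by inserting $w$ into one slot of $\fC \in S^3 W^*$; this is precisely the map in the statement, and the goal is $\ker\Phi = 0$. Since $\fC$ is $\ff_0^{\Ss}$-invariant (Table \ref{F:extended-magic}), $\Phi$ is a morphism of $\ff_0^{\Ss}$-modules, so $\ker\Phi \subset W$ is $\ff_0^{\Ss}$-stable. In fact $\Phi$, as a fixed element of $\tHom(W, S^2 W^*)$, is an $\ff_0$-weight tensor (namely $\fC$), so it carries each $\ff_0$-weight vector to an $\ff_0$-weight vector, shifting all weights by the fixed weight of $\fC$. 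Hence $\ker\Phi$ is a sum of $\ff_0$-weight spaces, and it suffices to show that $\Phi$ is nonzero on each irreducible $\ff_0$-constituent of $W$, i.e.\ that for one nonzero $w$ in each such constituent there exist $a,b \in W$ with $\fC(w,a,b) \neq 0$.

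For the genuinely exceptional cases $\fg \in \{F_4, E_6, E_7, E_8\}$ this is immediate. Here $W \cong \ff_{-1}$ is the bottom piece of the $|1|$-grading $\ff = \ff_{-1} \op \ff_0 \op \ff_1$ and is therefore $\ff_0^{\Ss}$-irreducible (respectively $S^2\bbC^3$, $\bbC^3 \ot \bbC^3$, $\wedge^2\bbC^6$, and the $27$ of $E_6$). By Schur's lemma $\ker\Phi \in \{0, W\}$, and $\ker\Phi = W$ would force $\fC \equiv 0$, contradicting $\fC = \det \neq 0$ from case (i). Hence $\ker\Phi = 0$.

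In the remaining cases $W$ is $\ff_0^{\Ss}$-reducible, so I fall back on the weight-space criterion together with the explicit cubics. For a spin factor $W = \JS_m = \bbC^m \op \bbC$ (types $B_\ell, D_\ell$), the constituents are the standard module $\bbC^m$ and the trivial line; from $\fC((v,\lambda)^3) = \langle v,v \rangle \lambda$ one computes $\fC\big((v,0),(0,1),(v',0)\big) = \tfrac{1}{3}\langle v,v' \rangle$ and $\fC\big((0,1),(v',0),(v'',0)\big) = \tfrac{1}{3}\langle v',v'' \rangle$, both nontrivial because $\langle\cdot,\cdot\rangle$ is non-degenerate; thus $\Phi$ is nonzero on each constituent. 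The case $W = \cJ_3(\underline{0}) = \bbC^3$ (type $D_4$, where $\ff_0^{\Ss}$ is trivial but $\ff_0$ is a torus acting with distinct weights on the three coordinate lines $\langle\sfe_i\rangle$) follows the same pattern from $\fC(t^3) = t_1 t_2 t_3$, since $\fC(\sfe_1,\sfe_2,\sfe_3) \neq 0$ rules out each weight line; and $W = \GJ = \bbC$ (type $G_2$) is trivial from $\fC(t^3) = t^3/3$.

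The main obstacle is exactly this loss of uniformity: irreducibility of $W$ fails precisely for the spin-factor and collapsed ($D_4$, $G_2$) cases, so a single Schur argument cannot cover everything and one must descend to the individual $\ff_0$-weight components. The essential input there is the non-degeneracy of the underlying bilinear data---the symmetric form $\langle\cdot,\cdot\rangle$ for the spin factors and the product structure for $\cJ_3(\underline{0})$---which is what keeps $\Phi$ nonzero on every constituent. (Conceptually, the same conclusion reflects the strictness of the osculating flag $V^0 \subset V^{-1} \subset V^{-2} \subset V^{-3} = V$ of the Legendrian variety $\cV$, with $\fC$ playing the role of its projective second fundamental form; this suggests an alternative, coordinate-free route to $\ker\Phi = 0$ that I would pursue if a fully case-independent proof were desired.)
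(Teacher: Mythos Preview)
Your proof is correct and follows essentially the same route as the paper: the identical case split (Schur's lemma via $\ff_0^{\Ss}$-irreducibility of $W$ for $\cJ_3(\bbA)$ with $\bbA \neq \underline{0}$; direct inspection for $\GJ$, $\cJ_3(\underline{0})$, and the spin factors). Your Schur step is in fact slightly cleaner than the paper's, which first decomposes $S^2 W^* \cong S^2_0 W^* \oplus W$, whereas you only need irreducibility of the source $W$ together with $\fC = \det \neq 0$; and for the ``immediate'' cases you supply the explicit polarizations that the paper leaves to the reader.
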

 
 \begin{proof} This is immediate for the spin factor, $\GJ$, and $\cJ_3(\underline{0})$ cases.  For $W = \cJ_3(\bbA)$, $S^2 W^* = S^2_0 W^* \op W$ as a sum of $\ff_0^{\Ss}$-irreps ($S^2_0 W^* \subset S^2 W^*$ denotes the highest weight component), e.g.\ when $G = E_8$, we have $\ff_0^{\Ss} = E_6$, and $W,W^*,S^2_0 W^*$ have $\ff_0^{\Ss}$-weights $\lambda_6,\lambda_1, 2\lambda_1$.  The claim follows by Schur's lemma.
 \end{proof}
 
 Fix a basis $\{ \sfw_a \}_{a=1}^{n-1}$ of $W$ and $\{ \sfw^a \}_{a=1}^{n-1}$ its dual basis.  On $V = \bbC \op W \op \bbC \op W^*$, take the basis 
 \begin{align} \label{E:CS-basis}
 \sfb_0 = 1, \quad \sfb_a = -6\sfw_a, \quad \sfb^0 = -4, \quad \sfb^a = -4\sfw^a.
 \end{align}
 \begin{framed}
  Notation: Given $t = t^a \sfw_a \in W$, write $\fC(t^3) := \fC(t,t,t) = \fC_{abc} t^a t^b t^c \in \bbC$, while $\fC_a(t^2) := \fC_{abc} t^b t^c = \frac{1}{3} \partial_{t^a}( \fC(t^3) )$ and $\fC_{ab}(t) := \fC_{abc} t^c = \frac{1}{6} \partial_{t^a} \partial_{t^b}( \fC(t^3) )$.
 \end{framed}
 
 We have the following descriptions of $\cV, \widehat\cV, \widetilde\cV$, and $[\sfQ]$, which are derived from Landsberg--Manivel \cite{LM2001}.  
 
 \begin{prop} The basis \eqref{E:CS-basis} is a CS-basis on $V$ for the ($\ff$-invariant) symplectic form given in \cite[Prop.5.4]{LM2001}.  In this basis, $\cV$ is locally parametrized about $l_0 = [1,0,0,0]$ by $t = t^a \sfw_a \in W$ via
 \begin{align} \label{E:SA-var}
 \phi: t \mapsto \left[1, -t^a, -\frac{1}{2} \fC(t^3), -\frac{3}{2} \fC_a(t^2)\right].
 \end{align}
 In standard coordinates $(u_{ij})$ (see \S \ref{S:LG}) about $o = \bbC \op W \in \tLG(V)$ induced from \eqref{E:CS-basis}, we locally have
 \begin{align}
 \widehat\cV: &\quad (u_{ij}) = \displaystyle\left( \begin{array}{c|c} u_{00} & u_{0b} \\ \hline u_{a0} & u_{ab}\end{array} \right) = \left( \begin{array}{c|c} \fC(t^3) & \frac{3}{2} \fC_b(t^2)\\[0.05in] \hline \frac{3}{2} \fC_a(t^2) & 3\fC_{ab}(t)  \end{array} \right); \label{E:whV}\\
 \widetilde\cV: &\quad \left\{ \begin{array}{l} u_{00} = t^a t^b u_{ab} - 2\fC(t^3)\\ u_{a0} = t^b u_{ab} - \frac{3}{2} \fC_a(t^2)\end{array} \right.. \label{E:wtV}
 \end{align}
 In particular, $\dim(\widehat\cV) = \dim(W)$ and $\codim(\widetilde\cV) = 1$. 
 \end{prop}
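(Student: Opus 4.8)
The plan is to take the Landsberg--Manivel parametrization \eqref{E:SA-var} as the starting point and then extract \eqref{E:whV} and \eqref{E:wtV} by elementary linear algebra in the standard coordinates of \S\ref{S:LG}, the only recurring ingredients being the homogeneity identities $t^a\fC_a(t^2) = \fC(t^3)$ and $t^a\fC_{ab}(t) = \fC_b(t^2)$, which are immediate from the total symmetry of $\fC$ together with Euler's relation.

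For the CS-basis assertion, I would substitute \eqref{E:CS-basis} into the $\ff$-invariant symplectic form of \cite[Prop.5.4]{LM2001}. By $\ff$-invariance and weights under the grading element defining $Q$, the form can only pair $V_0$ with $V_{-3}$ and $V_{-1} \cong W$ with $V_{-2} \cong W^*$ (the duality pairing), each up to a scalar; the normalizations $-6$ and $-4$ are chosen precisely to equalize these two scalars, so that $\eta$ becomes a multiple of $\begin{psmallmatrix} 0 & \id_n \\ -\id_n & 0 \end{psmallmatrix}$ in the ordered basis $(\sfb_0, \sfb_a; \sfb^0, \sfb^a)$. This simultaneously identifies the Lagrangian $o = \tspan\{\sfb_0, \sfb_a\} = V^{-1} = \widehat{T}_{l_0}\cV$ as the base point of standard coordinates. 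To confirm \eqref{E:SA-var} itself, I would exponentiate the action of $\ff_{-1}\cong W$ on the highest weight vector $v_0 = \sfb_0$: since $t$ lowers the grading by one and acts on the graded pieces through the cubic form, $\exp(t)\cdot v_0 = v_0 + t\cdot v_0 + \tfrac12 t^2\cdot v_0 + \tfrac16 t^3\cdot v_0$ terminates at $V_{-3}$, and reading off components in \eqref{E:CS-basis} gives the coefficients $-t^a, -\tfrac32\fC_a(t^2), -\tfrac12\fC(t^3)$.

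Granting \eqref{E:SA-var}, both remaining formulas are mechanical. Writing $\bV(t) = \sfb_0 - t^a\sfb_a - \tfrac12\fC(t^3)\sfb^0 - \tfrac32\fC_a(t^2)\sfb^a$, the affine tangent space is $\widehat{T}_{\phi(t)}\cV = \tspan\{\bV, \partial_{t^1}\bV, \dots, \partial_{t^{n-1}}\bV\}$, with $\partial_{t^c}\bV = -\sfb_c - \tfrac32\fC_c(t^2)\sfb^0 - 3\fC_{ac}(t)\sfb^a$. The vectors $-\partial_{t^c}\bV$ are already in graph form over $o$ and give the bottom block $u_{c0} = \tfrac32\fC_c(t^2)$, $u_{cb} = 3\fC_{cb}(t)$; forming $\bV + t^c(-\partial_{t^c}\bV)$ clears the $W$-components and, after the two homogeneity identities, yields $u_{00} = \fC(t^3)$ and $u_{0b} = \tfrac32\fC_b(t^2)$, which is \eqref{E:whV} (symmetry $u_{0b} = u_{b0}$ being automatic). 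For \eqref{E:wtV}, I would impose $[\bV(t)] \subset L$ for the graph Lagrangian $L = \tspan\{\sfb_i + u_{ij}\sfb^j\}$: matching the $\sfb_0, \sfb_a$ coefficients fixes the combination as $\bV(t) = (\sfb_0 + u_{0j}\sfb^j) - t^a(\sfb_a + u_{aj}\sfb^j)$, and matching the $\sfb^0, \sfb^b$ coefficients then gives $u_{00} - t^a u_{a0} = -\tfrac12\fC(t^3)$ and $u_{b0} - t^a u_{ab} = -\tfrac32\fC_b(t^2)$; the second is the stated formula for $u_{a0}$, and substituting it into the first (using $t^a\fC_a(t^2) = \fC(t^3)$) gives $u_{00} = t^a t^b u_{ab} - 2\fC(t^3)$.

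For the dimension counts, $\widehat\cV$ is parametrized injectively by $t$: by Lemma \ref{L:C-inj} the block $u_{ab} = 3\fC_{ab}(t)$ already recovers $t$, so $\dim\widehat\cV = \dim W$. Reading \eqref{E:wtV} as a parametrization of $\widetilde\cV$ by the free data $(t, u_{ab})$ of dimension $(n-1) + \binom{n}{2} = \binom{n+1}{2} - 1 = \dim\tLG(V) - 1$, it remains to see the map is generically immersive; the $t$-direction Jacobian block is $u_{ac} - 3\fC_{ac}(t)$, invertible for generic $u_{ab}$, so $\widetilde\cV$ is a hypersurface. I expect the only genuine obstacle to be bookkeeping: reconciling the Landsberg--Manivel normalizations with \eqref{E:CS-basis} so that the scalars $-6,-4$ produce a true CS-basis and the exact coefficients $-\tfrac12, -\tfrac32$ appear in \eqref{E:SA-var}. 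Once these are pinned down, everything else reduces to the Euler-identity manipulations above.
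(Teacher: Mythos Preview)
Your proposal is correct and follows essentially the same route as the paper's proof: for $\widehat\cV$ you row-reduce the span of $\bV(t)$ and its $t$-derivatives into graph form over $o$ using the Euler identities, and for $\widetilde\cV$ you impose $\bV(t)\in L$ by matching coefficients against the graph basis $\{\sfb_i + u_{ij}\sfb^j\}$---exactly the paper's two row-reduction steps. The paper is terser, deferring the CS-basis check and the form of \eqref{E:SA-var} to \cite[Prop.~5.4, \S1.2]{LM2001} rather than rederiving them via weight arguments and exponentiation of $\ff_{-1}$, and it omits the explicit dimension arguments you supply; but these are elaborations, not a different strategy.
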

 
 \begin{proof} The first claim is clear and \eqref{E:SA-var} follows from $\phi$ in \cite[Sec.1.2]{LM2001}.  Put the components of $l = \phi(t)$ and $\frac{\partial \phi}{\partial t^b}$ into the rows of a matrix and then row reduce to obtain $\widehat\cV$:
 \begin{align} \label{E:Vhat-mat}
 \left( \begin{array}{cc|cc}
 1 & -t^a & -\frac{\fC(t^3)}{2} & -\frac{3\fC_a(t^2)}{2}\\
 0 & -\delta_b{}^a & -\frac{3\fC_b(t^2)}{2} & -3\fC_{ba}(t) 
 \end{array} \right) \leadsto
 \left( \begin{array}{cc|cc}
 1 & 0 & \fC(t^3) & \frac{3\fC_a(t^2)}{2}\\
 0 & \delta_b{}^a & \frac{3\fC_b(t^2)}{2} & 3\fC_{ba}(t) 
 \end{array} \right).
 \end{align}
 Now for $\widetilde\cV$, let $L \in \tLG(V)$ have standard coordinates $(u_{ij})$.  Then row reduce
 \[
 \left( \begin{array}{cc|cc}
 1 & 0 & u_{00} & u_{0a}\\
 0 & \delta_b{}^a & u_{b0} & u_{ba}\\
 1 & -t^a & -\frac{\fC(t^3)}{2} & -\frac{3\fC_a(t^2)}{2}
 \end{array} \right) \leadsto
 \left( \begin{array}{cc|cc}
 1 & 0 & u_{00} & u_{0a}\\
 0 & \delta_b{}^a & u_{b0} & u_{ba}\\
 0 & 0 & -u_{00} + t^b u_{b0} -\frac{\fC(t^3)}{2} & -u_{0a} + t^b u_{ba} -\frac{3\fC_a(t^2)}{2}
 \end{array} \right).
 \]
 For the incidence condition $l \subset L$, the bottom row must be zero, and this yields $\widetilde\cV$. 
 \end{proof} 
 
 Remarkably, $\widehat\cV$ and $\widetilde\cV$ can also be derived via an envelope construction:
 
 \begin{cor} \label{R:Goursat} Consider the family of hypersurfaces $\fG_t = u_{00} - 2 t^a u_{a0} + t^a t^b u_{ab} - \fC(t^3) = 0$ in $\tLG(V)$ parametrized by $t \in W$.  Its first and second order envelopes are $\widetilde\cV$ and $\widehat\cV$ respectively.
 \end{cor}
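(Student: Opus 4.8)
The plan is to prove the corollary by a direct computation: differentiate the family $\fG_t$ with respect to the parameters $t^a$ and match the resulting systems against the already-established closed forms \eqref{E:wtV} and \eqref{E:whV}. I will interpret the $k$-th order envelope as the locus of those $(u_{ij})$ for which there exists $t \in W$ such that $\fG_t = 0$ together with all partial $t$-derivatives of $\fG_t$ of order $\leq k$ vanish. Throughout I use that $(u_{ij})$ is symmetric and the homogeneity identities for the cubic form, namely $\partial_{t^c}\fC(t^3) = 3\fC_c(t^2)$ and $\partial_{t^d}\fC_c(t^2) = 2\fC_{cd}(t)$, together with the Euler relations $t^a\fC_a(t^2) = \fC(t^3)$, $t^b\fC_{ab}(t) = \fC_a(t^2)$, and $t^a t^b\fC_{ab}(t) = \fC(t^3)$.

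First I would treat the first-order envelope. Differentiating the family gives
\[
\partial_{t^c}\fG_t = -2u_{c0} + 2t^b u_{cb} - 3\fC_c(t^2),
\]
so the conditions $\partial_{t^c}\fG_t = 0$ are exactly the second line $u_{c0} = t^b u_{cb} - \tfrac{3}{2}\fC_c(t^2)$ of $\widetilde\cV$ in \eqref{E:wtV}. Substituting this back into $\fG_t = 0$, i.e.\ into $u_{00} = 2t^a u_{a0} - t^a t^b u_{ab} + \fC(t^3)$, and invoking $t^a\fC_a(t^2) = \fC(t^3)$, the quadratic-in-$t$ terms collapse to $u_{00} = t^a t^b u_{ab} - 2\fC(t^3)$, which is the first line of \eqref{E:wtV}. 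Conversely the two equations of $\widetilde\cV$ reassemble into $\fG_t = 0$ and $\partial_{t^a}\fG_t = 0$, so the first-order envelope is precisely $\widetilde\cV$.

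Next I would treat the second-order envelope, which adjoins the Hessian conditions
\[
\partial_{t^c}\partial_{t^d}\fG_t = 2u_{cd} - 6\fC_{cd}(t),
\]
so $\partial_{t^c}\partial_{t^d}\fG_t = 0$ yields $u_{cd} = 3\fC_{cd}(t)$, the lower-right block of $\widehat\cV$ in \eqref{E:whV}. Feeding this into the order-one equation $u_{a0} = t^b u_{ab} - \tfrac{3}{2}\fC_a(t^2)$ and using $t^b\fC_{ab}(t) = \fC_a(t^2)$ collapses it to $u_{a0} = \tfrac{3}{2}\fC_a(t^2)$; feeding both into $\fG_t = 0$ and using $t^a\fC_a(t^2) = t^a t^b\fC_{ab}(t) = \fC(t^3)$ collapses it to $u_{00} = \fC(t^3)$. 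These are exactly the entries of \eqref{E:whV}, so the second-order envelope is $\widehat\cV$.

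I do not expect a genuine obstacle here: the essential content is that the three blocks $(u_{ab}, u_{a0}, u_{00})$ are fixed successively by the order-$2$, order-$1$, and order-$0$ conditions, with every would-be cross term annihilated by the Euler homogeneity relations for $\fC$. The only point requiring care is the direction of the equivalences, namely that each envelope locus is cut out by exactly the stated equations and nothing more; this follows because the three groups of conditions are triangular in the blocks $u_{ab} \to u_{a0} \to u_{00}$, so each group determines its block once the lower-order blocks are known. Since \eqref{E:wtV} and \eqref{E:whV} were already proven to describe $\widetilde\cV$ and $\widehat\cV$, matching them completes the identification.
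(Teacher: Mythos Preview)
Your proposal is correct and is precisely the direct verification that the paper's one-line proof (``We readily verify that $\widetilde\cV = \{ \fG_t = 0, \frac{\partial \fG_t}{\partial t^a} = 0 \}_{t \in W}$ and $\widehat\cV = \{ \fG_t = 0, \frac{\partial \fG_t}{\partial t^a} = 0, \frac{\partial^2 \fG_t}{\partial t^a \partial t^b} = 0 \}_{t \in W}$'') leaves to the reader. You have simply spelled out the differentiation and the Euler-identity substitutions that collapse the envelope conditions onto \eqref{E:wtV} and \eqref{E:whV}; there is no difference in approach.
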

 
 \begin{proof}
 We readily verify that $\widetilde\cV = \{ \fG_t = 0, \frac{\partial \fG_t}{\partial t^a} = 0 \}_{t \in W}$ and $\widehat\cV = \{ \fG_t = 0, \frac{\partial \fG_t}{\partial t^a} = 0, \frac{\partial^2 \fG_t}{\partial t^a \partial t^b} = 0  \}_{t \in W}$.
 \end{proof}

 To describe $\sfQ$, we use the {\em dual cubic} $\fC^* \in S^3 W$ (see Table \ref{F:dual-cubic}), induced from $\fC \in S^3 W^*$ via a multiple of the trace form $t \mapsto \tr(t^2)$ on the Jordan algebra $W$.  To fix this multiple, we use the normalization\footnote{See \cite[Lemma 5.2.1(iv)]{SV2000} or \cite[\S 4.3]{Man2013} for why the identity \eqref{E:dual-cubic-id} (up to scale) exists.}
 \begin{align} \label{E:dual-cubic-id}
 \fC^*(\fC(t^2)^2) = \frac{4}{27} \fC(t^3) t, \quad\qbox{i.e.} (\fC^*)^{abc} \fC_{bde} \fC_{cfg} = \frac{4}{27} \fC_{(def} \delta_{g)}{}^a.
 \end{align}
 (Rescaling $\fC$ by $\lambda$ forces $\fC^*$ to rescale by $\frac{1}{\lambda}$.)  Note that $\fC^*(\fC(t^2)^3) = \frac{4}{27} \fC(t^3)^2$.  But setting $s^* = \fC(r^2)$ and $t=r$ in the equation preceding \cite[Lemma 5.6]{LM2001} yields $\fC^*(\fC(r^2)^3) = 2\fC(r^3)^2$, so our $\fC^*$ is $\frac{2}{27}$ times theirs.  

 \begin{table}[h]
 \[
 \begin{array}{|c|c|c|c|c|} \hline
 W & \mbox{Trace form on $W$} & \mbox{Induced $\sharp : W^* \to W$} & \fC(t^3) & \fC^*((s^*)^3)\\ \hline\hline
 \cJ_3(\bbA) & t \mapsto \tr(t^2) & 
 \begin{psmallmatrix}
 \lambda_1 & v_1 & v_2\\
 \overline{v_1} & \lambda_2 & v_3\\
 \overline{v_2} & \overline{v_3} & \lambda_3
 \end{psmallmatrix} \mapsto 
 \begin{psmallmatrix}
 \lambda_1 & \frac{1}{2} v_1 & \frac{1}{2} v_2\\
 \frac{1}{2} \overline{v_1} & \lambda_2 & \frac{1}{2} v_3\\
 \frac{1}{2} \overline{v_2} & \frac{1}{2} \overline{v_3} & \lambda_3
 \end{psmallmatrix} & \det(t) & 4\det((s^*)^\#)\\ \hline
 \GJ & t \mapsto t^2 & t \mapsto t & \frac{t^3}{3} & \frac{4(s^*)^3}{9}\\ \hline
 \JS_m & t = (v,\lambda) \mapsto \langle v,v \rangle + \lambda^2 & (v^*,\mu) \mapsto ((v^*)^{\sharp},\mu) & \langle v,v \rangle \lambda & \langle (v^*)^{\sharp}, (v^*)^{\sharp} \rangle \mu\\ \hline
 \end{array}
 \]
 \caption{Dual cubic $\fC^* \in S^3 W$ in our normalization \eqref{E:dual-cubic-id}}
 \label{F:dual-cubic}
 \end{table}
 
 \begin{prop}
 Let $(\alpha,r_a,\beta^*,s^a)$ be coordinates wrt \eqref{E:CS-basis} and let $r = r^a\sfw_a$ and $s^* = s_a \sfw^a$.  Then
 \begin{align} \label{E:quartic}
 \sfQ(\alpha,r^a,\beta^*,s_a) = (\alpha\beta^* + \langle r, s^* \rangle )^2 + 2 \beta^* \fC(r^3) - 2\alpha \fC^*((s^*)^3) - 9 \langle \fC(r^2), \fC^*((s^*)^2) \rangle.
 \end{align}
 \end{prop}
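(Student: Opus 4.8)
The plan is to verify that the quartic on the right-hand side of \eqref{E:quartic}, call it $\widetilde{\sfQ}$, vanishes on the tangential variety $\tau(\cV) = \bigcup_{l\in\cV}\bbP(\widehat T_l\cV)$, and then to invoke irreducibility of $\tau(\cV)$ to conclude that $\widetilde{\sfQ}$ is a scalar multiple of the defining equation $\sfQ$. First I would parametrize the cone over $\tau(\cV)$. Using the standard-coordinate description \eqref{E:whV} of $\widehat\cV$, the affine tangent space $\widehat T_{\phi(t)}\cV \in \tLG(V)$ is spanned by $\sfb_0 + \fC(t^3)\sfb^0 + \tfrac32\fC_a(t^2)\sfb^a$ and $\sfb_b + \tfrac32\fC_b(t^2)\sfb^0 + 3\fC_{ba}(t)\sfb^a$. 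Hence a generic point $v = \alpha\sfb_0 + r^b\sfb_b + \beta^*\sfb^0 + s_a\sfb^a$ of this cone is obtained by taking $(\alpha, r^b)$ free and setting
\[
\beta^* = \alpha\fC(t^3) + \tfrac32\, r^b\fC_b(t^2), \qquad s_a = \tfrac32\alpha\fC_a(t^2) + 3\, r^b\fC_{ab}(t),
\]
so that $s^* = s_a\sfw^a = 3\,\fC\!\left(t,\,\tfrac{\alpha}{2}t + r,\,\cdot\right) \in W^*$. This realizes the $(2n-1)$-parameter family sweeping out the codimension-one variety $\tau(\cV)$.

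Next I would substitute these expressions into $\widetilde{\sfQ}$ and reduce to $0$. The terms $(\alpha\beta^* + \langle r, s^*\rangle)^2$ and $2\beta^*\fC(r^3)$ expand into contractions of $\fC$ alone, and are handled purely by the homogeneity relations $\fC_a(t^2) = \fC_{abc}t^bt^c$ and $\fC_{ab}(t) = \fC_{abc}t^c$ together with the symmetry of $\fC$. The genuinely hard terms are $-2\alpha\fC^*((s^*)^3)$ and $-9\langle\fC(r^2),\fC^*((s^*)^2)\rangle$, since $s^*$ enters through $\fC^*$. For these I would use the fundamental identity \eqref{E:dual-cubic-id}, $\fC^*(\fC(t^2)^2) = \tfrac{4}{27}\fC(t^3)t$, together with its full polarization in the vector argument of $\fC(t^2)$, which produces the mixed contractions $\fC^*\big(\fC(t,p,\cdot),\fC(t,q,\cdot),\cdot\big)$ needed once $s^* = 3\fC(t,p,\cdot)$ with $p = \tfrac\alpha2 t + r$ is inserted. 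Expanding in powers of $\alpha$ and matching coefficients turns the vanishing of $\widetilde{\sfQ}$ into a finite list of polynomial identities in $(r,t)$, each a consequence of polarizing \eqref{E:dual-cubic-id} and its corollary $\fC^*(\fC(t^2)^3) = \tfrac4{27}\fC(t^3)^2$.

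Finally I would conclude: since $\cV$ is irreducible, so is $\tau(\cV)$, and as it is cut out by the irreducible quartic $\sfQ$, its ideal is $(\sfQ)$. The quartic $\widetilde{\sfQ}$ vanishes on $\tau(\cV)$ and is nonzero (it contains the monomial $(\alpha\beta^*)^2$), so $\widetilde{\sfQ} = c\,\sfQ$ for some constant $c$; absorbing $c$ fixes the representative and yields \eqref{E:quartic}. As a cross-check — and a viable alternative to the substitution — one can instead verify that $\widetilde{\sfQ}$ is $\fg_0$-invariant up to conformal scaling: it is manifestly $\ff_0^{\Ss}$-invariant and, by the weight count on $V = V_0\op V_{-1}\op V_{-2}\op V_{-3}$, a weight vector for the grading elements, so it remains only to check invariance under $\ff_{\pm1}\cong W^*\op W$, after which uniqueness of the $\fg_0$-invariant quartic (Proposition \ref{P:g0-reduction}) forces $\widetilde{\sfQ}\in[\sfQ]$.

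The hardest step, I expect, is the polarization bookkeeping for the $\fC^*$-terms: assembling the mixed contractions $\fC^*(\fC(t,p,\cdot)^k)$ out of \eqref{E:dual-cubic-id} and verifying the cancellations against the $\fC$-only terms. The envelope viewpoint of Corollary \ref{R:Goursat} offers no shortcut here, since $\sfQ$ lives on $\bbP(V)$ rather than on $\tLG(V)$, so the identity \eqref{E:dual-cubic-id} (equivalently, the precise normalization tying $\fC^*$ to $\fC$) is doing the essential work.
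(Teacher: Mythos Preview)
Your approach is correct but quite different from the paper's. The paper does not verify anything from scratch: it simply cites \cite[Prop.~5.5]{LM2001}, where the quartic is already written down in Landsberg--Manivel's conventions, and then performs two bookkeeping steps --- rescaling their $\fC^*$ by $\tfrac{2}{27}$ to match the normalization \eqref{E:dual-cubic-id}, and substituting the basis change \eqref{E:CS-basis} (i.e.\ $v=(\alpha,-6r,-4s^*,-4\beta^*)$) followed by an overall rescaling of $\sfQ$. So the paper's ``proof'' is really a translation exercise.

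Your route is a genuine self-contained verification: parametrize the cone over $\tau(\cV)$ via $\widehat T_{\phi(t)}\cV$, substitute, and reduce using polarizations of \eqref{E:dual-cubic-id}. This is more work but has the virtue of not depending on the external reference, and you have correctly identified \eqref{E:dual-cubic-id} as the engine driving the $\fC^*$-cancellations. One small caveat on your alternative invariance argument: Proposition~\ref{P:g0-reduction} asserts that the stabilizer of $[\sfQ]$ in $\fcsp(V)$ is $\fg_0$, not that the $\fg_0$-invariant quartic is unique up to scale; the latter requires knowing that the trivial $\fg_0$-isotypic component of $S^4 V^*$ is one-dimensional, which is true but needs its own justification. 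Your primary approach via irreducibility of $\tau(\cV)$ avoids this and is the cleaner way to close the argument.
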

 
 \begin{proof}
 This follows from \cite[Prop.5.5]{LM2001} by replacing their $\fC^*$ by $\frac{27}{2} \fC^*$ to get to our normalization.  Then substitute $v = (\alpha,-6r, -4s^*,-4\beta^*)$ and rescale $\sfQ$.
 \end{proof}
 
 \begin{example}[$G_2 / P_2$]  Continuing Example \ref{EX:G2-1}, let $\fC(t^3) = \frac{t^3}{3}$.  In the CS-basis $(\sfb_0,\sfb_1,\sfb^0,\sfb^1) = (\sfr^3,-3\sfr^2\sfs,-6\sfs^3,-6\sfr\sfs^2)$, $[(\sfr + t\sfs)^3]$ takes the form in \eqref{E:SA-var} and $\sfQ$ from \eqref{E:G2-Q} takes the form in \eqref{E:quartic}.
 
 On $\bigwedge^2 V$, define $(\cdot,\cdot)$ by $ (f_1 \wedge f_2, g_1 \wedge g_2) = \vol_V(f_1,f_2,g_1,g_2)$, where $0 \neq \vol_V \in \bigwedge^4 V^*$.  The Pl\"ucker embedding identifies $\tLG(2,4)$ with $\cQ = \{ [z] : (z,z) = 0 \} \subset \bbP(\bigwedge^2_0 V) \cong \bbP^4$, where elements of $\bigwedge^2_0 V$ contract trivially with $\eta$.  Then $\widehat\cV \subset \cQ \subset \bbP^4$ and $\tau(\widehat\cV) = \widetilde\cV$.  (The latter does not hold for other $G$.)
 
  About $o = \tspan\{ \sfb_0, \sfb_1 \}$, standard coordinates $(u_{00}, u_{01}, u_{11})$ on $\tLG(2,4)$ correspond to $\tspan\{ \sfb_0 + u_{00} \sfb^0 + u_{01} \sfb^1, \sfb_1 + u_{01} \sfb^0 + u_{11} \sfb^1 \}$.  Via Pl\"ucker, this is $(1,u_{00},u_{01},u_{11},u_{00} u_{11} - (u_{01})^2)$ with respect to the basis $\sfb_0 \wedge \sfb_1, \, \sfb^0 \wedge \sfb_1, \, \sfb_0 \wedge \sfb^0 - \sfb_1 \wedge \sfb^1, \, \sfb_0 \wedge \sfb^1, \, \sfb^0 \wedge \sfb^1$ on $\bigwedge^2_0 V$.  Using \eqref{E:whV}, $\widehat\cV$ is a twisted quartic given by 
$\gamma(t) = \left(1,\frac{t^3}{3},\frac{t^2}{2},t,\frac{t^4}{12}\right)$, and $\gamma(t) + \mu\gamma'(t)$ is given by $u_{00} = \frac{t^3}{3} + \mu t^2, \, u_{01} = \frac{t^2}{2} + \mu t, \, u_{11} = t + \mu$ is its tangent developable.\footnote{In \cite{Car1893}, Cartan only briefly alluded to the Goursat parabolic PDE as the tangent developable for which the involutive system is the singular variety.  See \cite[p.161 -- eq.(7)]{Car1910} for the explicit model, which should read: $r \,{\color{red} +}\, x_5 s - \frac{1}{6} x_5^3 = 0,\,\, s + x_5 t + \frac{1}{2} x_5^2 = 0$.
}  Eliminating $\mu$ yields \eqref{E:wtV} for $\widetilde\cV$.  Note that $\widehat\cV$ is a null curve and $\widetilde\cV$ is a null surface in $\tLG(2,4)$ for the conformal structure $[du_{00} du_{11} - (du_{01})^2]$.
 \end{example}

 \section{Parabolic contact structures and flat models}
  
 \subsection{Contact geometry}
 \label{S:contact}
 
 We now summarize the geometric construction of jet spaces \cite{Yam1982,Olv1995,KLR2007}.
 
 Given a contact manifold $(M^{2n+1},\cC)$, the corank one contact distribution $\cC \subset \Gamma(TM)$ is completely non-integrable.  For any local defining 1-form $\sigma$ (unique up to a conformal factor), this means that $\sigma \wedge (d\sigma)^n \neq 0$ everywhere and so $\eta = (d\sigma)|_\cC$ yields a CS-form on $\cC$.  Define the Lagrange--Grassmann bundle $\pi : M^{(1)} \to M$ by letting $\tLG(\cC_m)$ be its fibre over $m \in M$.  Any $m^{(1)} \in M^{(1)}$ such that $\pi(m^{(1)}) = m$ corresponds to a Lagrangian subspace $L_{m^{(1)}} \subset \cC_m$, so this tautologically defines the {\em canonical distribution} $\cC^{(1)} \subset \Gamma(TM^{(1)})$ via $\cC^{(1)}_{m^{(1)}} = (\pi_*)^{-1}(L_{m^{(1)}})$.  (For higher-order prolongations $M^{(k)}$, see e.g.\ \cite{Yam1982}.)
 
 By Pfaff's theorem, there are local coordinates $(x^i,u,u_i)$ on $M$ such that $\sigma = du - u_i dx^i$, i.e.\ locally, $M$ is the first jet space $J^1(\bbC^n,\bbC)$.  With respect to $\eta = d\sigma = dx^i \wedge du_i$, $\cC$ has {\em standard CS-framing}
 \begin{align} \label{E:std-CS}
 \partial_{x^i} + u_i \partial_u, \quad \partial_{u_i}.
 \end{align}
 On $M^{(1)}$, take $\pi$-adapted coordinates $(x^i,u,u_i,u_{ij})$: about $o = \tspan\{ \partial_{x^i} + u_i \partial_u \}$, let fibre coordinates $u_{ij} = u_{ji}$ correspond to the Lagrangian subspace $\tspan\{ \partial_{x^i} + u_i \partial_u + u_{ij} \partial_{u_j} \}$ so that $\cC^{(1)}$ is given by
 \begin{align} \label{E:J2-cansys}
 \tspan\{ \partial_{x^i} + u_i \partial_u + u_{ij} \partial_{u_j}, \,\, \partial_{u_{ij}} \} = \ker\{ du - u_i dx^i, du_i - u_{ij} dx^j \}.
 \end{align}
 Locally, $M^{(1)}$ is the second jet space $J^2(\bbC^n,\bbC)$.

 Given a distribution $\cD$ on a manifold $N$, we may form its weak derived flag $\cD =: \cD^{-1} \subset \cD^{-2} \subset ...$.  Its associated-graded $\fg_-(n) = \cD(n) \op (\cD^{-2}(n) / \cD^{-1}(n)) \op ...$ at $n \in N$ is the {\em symbol algebra}.  This is a nilpotent graded Lie algebra, whose (tensorial) bracket is induced from the Lie bracket of vector fields on $N$.  The symbol algebras for $(M,\cC)$ and $(M^{(1)},\cC^{(1)})$ are respectively modelled on:
 \begin{align}
 \fg_- &= \fg_{-1} \op \fg_{-2} \cong V \op \bbC, \label{E:J1-symbol}\\
 \fg_- &= \fg_{-1} \op \fg_{-2} \op \fg_{-3} \cong (L \op S^2 L^*) \op L^* \op \bbC, \label{E:J2-symbol}
 \end{align}
 where $\dim(V) = 2n$ and $\dim(L) = n$.  The former is the Heisenberg Lie algebra, while the non-trivial brackets on the latter are all natural contractions.  We note that $S^2 L^*$ corresponds to a distinguished subbundle of $\cC^{(1)}$, namely the vertical bundle for $\pi : M^{(1)} \to M$.
   
 A contact transformation of $(M,\cC)$ is a diffeomorphism $\phi : M \to M$ such that $\phi_*(\cC) = \cC$.  Infinitesimally, $\bX \in \Gamma(TM)$ is contact if $\cL_\bX \cC \subset \cC$.  These definitions apply similarly for $(M^{(1)},\cC^{(1)})$, but more can be said: by B\"acklund's theorem, any contact transformation [vector field] of $(M^{(1)},\cC^{(1)})$ is the {\em prolongation} of one on $(M,\cC)$.  (See \cite{Olv1995} for the standard prolongation formula yielding $\bX^{(1)} \in \Gamma(TM^{(1)})$ from $\bX$.) 
 
   On $J^1(\bbC^n,\bbC)$, any contact vector field is uniquely determined by a function on $M$ called its {\em generating function}.  Conversely, any $f = f(x^i,u,u_i)$ is a generating function for a contact vector field via
 \begin{align} \label{E:Xf}
 \bS_f = -f_{u_i} \partial_{x^i} + (f - u_i f_{u_i}) \partial_u + (f_{x^i} + u_i f_u) \partial_{u_i} 
 = -f_{u_i} \frac{d}{dx^i} + f \partial_u + \frac{df}{dx^i} \partial_{u_i},
 \end{align}
 where $\frac{d}{dx^i} := \partial_{x^i} + u_i \partial_u$. If $g$ is another generating function, the commutator $[\bS_f, \bS_g]$ is a contact vector field $\bS_{[f,g]}$, where the {\em Lagrange bracket} $[f,g]$ is given by
 \begin{align} \label{E:Lagrange}
 [f,g] = f g_u - g f_u + \frac{df}{dx^i} g_{u_i} - \frac{dg}{dx^i} f_{u_i}.
 \end{align}

A (system of) second order PDE in one dependent variable and $n$-independent variables corresponds to a submanifold $\cR \subset \tLG(\cC) = M^{(1)}$ transverse to $\pi$.  The distribution $\cC^{(1)}$ and its derived system $(\cC^{(1)})^{-2}$ induce distributions $\cD$ and $\widetilde\cC$ on $\cR$, and $(\cR;\cD,\widetilde\cC)$ is called a {\em PD-manifold} \cite{Yam1982}.  By \cite[Thm.4.1]{Yam1999}, all symmetries of $(\cR;\cD,\widetilde\cC)$ correspond to (external) contact symmetries of $\cR \subset M^{(1)}$, i.e.\ contact transformations of $M^{(1)}$ preserving $\cR$.   Define $\cR^{(1)}$ as the collection of $n$-dimensional integral elements for $(\cR,\cD)$ transverse to $\pi : M^{(1)} \to M$.  From \cite[Thm.4.2]{Yam1999}, \cite[Prop.5.11]{Yam1982}, if $\cR^{(1)} \to \cR$ is surjective, then for any $v \in \cR$,
 \begin{align} \label{E:PD-Cauchy}
 \dim(\widetilde\cC(v)) - \dim(\cD^{-2}(v)) = \dim(\Ch(\cD)(v)),
 \end{align}
 where $\Ch(\cD) = \{ \bX \in \Gamma(\cD) : \cL_\bX \cD \subset \cD \}$ is the {\em Cauchy characteristic} space of $\cD$.

 
 \subsection{$G$-contact structures}
 \label{S:G-contact}
 
 Given a contact manifold $(M,\cC)$ with symbol algebra $\fg_-(m)$ at $m \in M$ modelled on the Heisenberg algebra $\fg_-$, the graded frame bundle $F_{\tgr}(M) \to M$ has fibre over $m \in M$ consisting of all graded Lie algebra isomorphisms $\iota : \fg_- \to \fg_-(m)$.  Its structure group is $\CSp(\fg_{-1})$.  
   
 \begin{defn} \label{D:G-contact} Let $G \neq A_1,C_\ell$ be a complex simple Lie group and $G^{\ad} \cong G/P$.  Let $G_0 \subset P$ be the reductive part.  A {\em $G$-contact structure} is a contact manifold $(M,\cC)$ of dimension $\dim(G/P)$ whose graded frame bundle $F_{\tgr}(M) \to M$ has structure group reduced according to the homomorphism $G_0 \to \CSp(\fg_{-1})$.
 \end{defn}
 
 A (local) equivalence of $G$-contact structures is a (local) contact transformation whose pushforward preserves the graded frame bundle reductions.  The fundamental theorem of Tanaka, Morimoto, and \v{C}ap--Schichl (see \cite{CS2009} for definitions and references) establishes an equivalence of categories between $G$-contact structures and (regular, normal) parabolic geometries of type $(G,P)$. Well-known consequences \cite{CS2009} are:
 \begin{itemize}
 \item Any such structure has symmetry dimension at most $\dim(\fg)$.
 \item There is a unique local model (the ``flat model'') with maximal symmetry dimension $\dim(\fg)$ and this has symmetry algebra isomorphic to $\fg$.
 \item $G$-contact structures are all non-rigid geometries, i.e.\ there exist non-flat models.
 \end{itemize}
 In spite of these general results arising from the broader theory of parabolic geometries, concrete local descriptions of $G$-contact structures have been lacking in the literature.  Indeed, we only know of Engel's twisted cubic model \cite{Eng1893} and the (contact) conformal quartic description \cite{Nur2013,LNS2016}.
 
Restrict now to $G \neq A_\ell,C_\ell$. Since $\fg_0 \subsetneq \fcsp(\fg_{-1})$ is a {\em maximal} subalgebra (Proposition \ref{P:g0-reduction}), the required structure group reduction (up to possibly a discrete subgroup) is mediated by a {\em field} of sub-adjoint varieties $\cV$ {\em or} any of $\widehat\cV,\widetilde\cV, [\sfQ]$, e.g.\ we require any graded isomorphism $\iota : \fg_- \to \fg_-(m)$ to map the model $\cV \subset \bbP(\fg_{-1})$ projectively onto $\cV_m \subset \bbP(\cC_m)$.  In \S \ref{S:LM}, these were given in a CS-basis, so a (local) $G$-contact structure is determined by a (local) CS-framing $\{ \bX_i, \bU^i \}_{i=0}^{n-1}$ on $\cC$ or its dual coframing $\{ \omega^i, \theta_i \}_{i=0}^{n-1}$.  The former induces fibre coordinates $p_{ij} = p_{ji}$ on $M^{(1)}$ corresponding to the Lagrangian subspace $\tspan\{ \bX_i + p_{ij} \bU^j \} = \ker\{ \theta_i - p_{ij} \omega^j \}$.  Thus, a $G$-contact structure is equivalently any of:
 \begin{itemize}
 \item a field of sub-adjoint varieties $\cV \subset \bbP(\cC)$, given by the projectivization of the vector fields
 \begin{align} \label{E:G-SA-var}
 \bV(\lambda,t) = \lambda^3\bX_0 - \lambda^2 t^a \bX_a - \frac{1}{2} \fC(t^3) \bU^0 - \frac{3}{2} \lambda\fC_a(t^2) \bU^a, \qquad \forall [\lambda,t] \in \bbP(\bbC \op W).
 \end{align}
 \item a field of tangential varieties $\tau(\cV) = \{ \sfQ = 0 \} \subset \bbP(\cC)$.  Letting $\Omega = \omega^a \otimes \sfw_a$ and $\Theta = \theta_a \otimes \sfw^a$, the (conformal) quartic $\sfQ \in S^4 \cC^*$ is given by
 \begin{align} \label{E:G-quartic}
  \sfQ = (\omega^i \theta_i )^2 + 2 \theta_0 \fC(\Omega^3) - 2 \omega^0 \fC^*(\Theta^3) - 9 \fC_a(\Omega^2) (\fC^*)^a(\Theta^2).
 \end{align}
 \item a 2nd order PDE (system) $\cE := \widehat\cV \subset \tLG(\cC) = M^{(1)}$, given in the CS-framing $\{ \bX_i, \bU^i \}$ by
 \begin{align} \label{E:G-involutive}
 (p_{ij}) 
 = \displaystyle\left( \begin{array}{c|c} p_{00} & p_{0b} \\ \hline p_{a0} & p_{ab}\end{array} \right) 
 = \left( \begin{array}{c|c} \fC(t^3) & \frac{3}{2} \fC_b(t^2)\\[0.05in] \hline \frac{3}{2} \fC_a(t^2) & 3\fC_{ab}(t)  \end{array} \right).
 \end{align}
 \item a single 2nd order PDE $\cF := \widetilde\cV \subset \tLG(\cC) = M^{(1)}$, given in the CS-framing $\{ \bX_i, \bU^i \}$ by
 \begin{align} \label{E:G-Goursat}
  \left\{ \begin{array}{l} p_{00} = t^a t^b p_{ab} - 2\fC(t^3)\\ p_{a0} = t^b p_{ab} - \frac{3}{2} \fC_a(t^2)\end{array} \right. .
 \end{align}
 \end{itemize}

 Now $(M_1,\cC_1,\cV_1)$ and $(M_2,\cC_2,\cV_2)$ are (locally) equivalent if there is a (local) contact transformation $\phi : M_1 \to M_2$ such that $\phi_*(\cV_1) = \cV_2$.   A symmetry is a self-equivalence of $(M,\cC,\cV)$.  A similar formulation holds for $[\sfQ]$.  For $\cE \subset M^{(1)}$, a symmetry is a contact transformation $\Phi : M^{(1)} \to M^{(1)}$ such that $\Phi(\cE) = \cE$, i.e.\ {\em external} symmetries.  By B\"acklund's theorem, $\Phi = \phi_*$ for some contact transformation $\phi : M \to M$.  Thus, symmetries of $\cE$ regarded as a field $m \mapsto \cE_m = \widehat\cV_m \subset \tLG(\cC_m)$ on $M$ are in 1-1 correspondence with external symmetries of $\cE \subset M^{(1)}$ regarded as a submanifold (PDE).  A similar formulation holds for $\cF$.  
  
If $\bS \in \Gamma(TM)$ is a contact vector field with prolongation $\bS^{(1)} \in \Gamma(TM^{(1)})$, then the infinitesimal symmetry condition for each of $\cV$, $[\sfQ]$, $\cE$, $\cF$ is correspondingly:
 \begin{align}
 \cL_\bS (\bV(\lambda,t)) \in \widehat{T}_{[\bV(\lambda,t)]} \cV; \qquad
 \cL_\bS \sfQ = \mu \sfQ \,\,\mbox{on}\,\, \cC; \qquad
 \cL_{\bS^{(1)}} \cE = 0 \,\,\mbox{on}\,\, \cE; \qquad
 \cL_{\bS^{(1)}} \cF = 0 \,\,\mbox{on}\,\, \cF.
 \end{align}
 
 \begin{prop} \label{P:same-sym}
 The symmetry algebra of $(M,\cC)$ endowed with any of $\cV$, or any of the induced fields $\cE = \widehat\cV$, $\cF = \widetilde\cV$, or $\tau(\cV) = \{ \sfQ = 0 \}$ is the {\em same}.
 \end{prop}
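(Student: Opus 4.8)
The plan is to show that all four fields encode the \emph{same} reduction of the conformal symplectic frame bundle of $(M,\cC)$ to structure group $G_0$, so that a contact transformation preserves one field precisely when it preserves this common $G_0$-structure. A contact transformation $\phi$ of $(M,\cC)$ rescales the conformal class $[\eta]$ and hence induces, fibrewise, conformal symplectic isomorphisms $\cC_m \to \cC_{\phi(m)}$; these act compatibly on $\bbP(\cC)$, on $\tLG(\cC) = M^{(1)}$ (conformal symplectic maps carry Lagrangians to Lagrangians), and on $S^4\cC^*$. By B\"acklund's theorem, external symmetries of the PDE $\cE, \cF \subset M^{(1)}$ are exactly the prolongations $\phi_*$ of such $\phi$ preserving the fields $\widehat\cV, \widetilde\cV$, and similarly symmetries of $\cV$ and of $[\sfQ]$ are the $\phi$ preserving those fields. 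Thus it suffices to prove that, for a contact transformation $\phi$, the four conditions $\phi_*\cV = \cV$, $\phi_*\widehat\cV = \widehat\cV$, $\phi_*\widetilde\cV = \widetilde\cV$, and $\phi_*[\sfQ] = [\sfQ]$ are mutually equivalent.

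One implication is immediate from naturality. The assignments $\cV \mapsto \widehat\cV$ (via $l \mapsto \widehat{T}_l\cV$), $\cV \mapsto \widetilde\cV$ (the union of Lagrangians containing a point of $\cV$), and $\cV \mapsto \tau(\cV) = \{\sfQ = 0\}$ are all fibrewise and $\CSp(\cC)$-equivariant, since affine tangent spaces, Lagrangian subspaces, and tangential varieties are conformal symplectic notions. Hence $\phi_*\cV = \cV$ forces $\phi_*\widehat\cV = \widehat\cV$, $\phi_*\widetilde\cV = \widetilde\cV$, and $\phi_*[\sfQ] = [\sfQ]$. For the converse I would invoke Proposition \ref{P:g0-reduction}: the pointwise stabilizer in $\fcsp(\cC_m)$ of each of the four objects equals $\fg_0$. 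Consequently each field cuts out the \emph{same} infinitesimal $G_0$-reduction of the frame bundle, and $\phi$ preserves a given field if and only if its induced frame-bundle action preserves this reduction. Since the reduction is common to all four, preserving any one is equivalent to preserving all; passing to one-parameter families of contact transformations then recovers the equality of symmetry \emph{algebras}.

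The crux is the converse direction, where one must rule out that preserving (say) $\widehat\cV$, $\widetilde\cV$, or $[\sfQ]$ could admit strictly more symmetries than preserving $\cV$. This is exactly what the maximality of $\fg_0 \subsetneq \fcsp(\cC_m)$ supplies: each construction is manifestly $G_0$-invariant, so its stabilizer contains $\fg_0$; by Proposition \ref{P:g0-reduction} (ultimately resting on the irreducibility of $\ff \subset \fsp(V)$ together with Dynkin's classification of maximal subalgebras), the stabilizer can be no larger than $\fg_0$, and hence equals it. Thus all four fields determine one and the same $\fg_0$-structure at the infinitesimal level, and since the proposition concerns symmetry \emph{algebras} it is enough to work at this Lie algebra level, so no component-group or global subtleties in the stabilizer groups intervene. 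With this in hand, the equivalence of the four infinitesimal symmetry conditions displayed just before the proposition is automatic, and the symmetry algebras coincide.
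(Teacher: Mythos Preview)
Your proposal is correct and follows essentially the same approach as the paper's proof: both argue that, by Proposition~\ref{P:g0-reduction}, each of the four fields reduces the structure algebra of the graded frame bundle to $\fg_0$, that these reductions are compatible since all are induced from $\cV$, and that working at the Lie algebra level sidesteps any discrete discrepancies at the group level. Your version is more explicit about the naturality of the forward direction and the role of B\"acklund's theorem, but the underlying argument is the same.
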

 
 \begin{proof}
 By Proposition \ref{P:g0-reduction}, each structure reduces the structure algebra of $F_{\tgr}(M) \to M$ according to the homomorphism $\fg_0 \to \fcsp(\fg_{-1})$ and these reductions are compatible since they are all induced from $\cV$.  At the group level, the reductions could potentially differ, but only by the action of a discrete group, which does not affect the (infinitesimal) symmetry algebra.
 \end{proof}
 
 This simple observation {\em dramatically} simplifies the (contact) symmetry computation for the PDE $\cE$ or $\cF$.  In particular, we avoid the complicated prolongation formula that yields $\bS^{(1)}$ from $\bS$ and instead we can equivalently find symmetries of $\cV$ or $[\sfQ]$ on $M$ itself.
 

 \subsection{Harmonic curvature and the flat $G$-contact structure}
 \label{S:Kh}
 
 A fundamental tensorial invariant for all (regular, normal) parabolic geometries is {\em harmonic curvature} $\kappa_H$.  It is a {\em complete} obstruction to flatness of the geometry.  Given the $G_0$-reduction $\cG_0 \subset F_{\tgr}(M)$ for a $G$-contact structure, $\kappa_H$ is a $G_0$-equivariant function valued in a cohomology space $H^2_+(\fg_-,\fg)$, or equivalently it is a section of the associated vector bundle $\cG_0 \times_{G_0} H^2_+(\fg_-,\fg)$ over $M$.  Concretely \cite[Chp.5]{CS2009}, for $G$-contact structures we find $\kappa_H$ as follows:
 \begin{enumerate}
 \item[(i)] Given any CS-framing $\{ \bX_i, \bU^i \}$ of $\cC$, define a partial connection $\nabla : \Gamma(TM) \times \Gamma(\cC) \to \Gamma(\cC)$ for which all frame vector fields are {\em parallel}.  Then $\nabla (\bV(\lambda,t)) = 0$ for any $[\lambda,t] \in \bbP(\bbC\op W)$, so the $G_0$-structure reduction is preserved.  Writing $[\bX_i, \bU^j] = \delta^i{}_j \bT \mod \cC$, we have $\bT \mod \cC \in \Gamma(TM/\cC)$ parallel for the induced connection on $TM/\cC$.
 \item[(ii)] Let $\bigwedge^2_0 \cC$ be kernel of the map $\bigwedge^2 \cC \to TM/\cC$ induced from the Lie bracket. The torsion $T^\nabla(X,Y) = \nabla_X Y - \nabla_Y X - [X,Y]$ restricts to a map $T^\nabla : \Gamma(\bigwedge^2_0 \cC) \to \Gamma(\cC)$.  In particular, {\em its components in the  CS-framing $\{ \bX_i, \bU^i \}$ above involve only the Lie bracket}.
 \item[(iii)] $\kappa_H$ is obtained by projecting $T^\nabla \in  \Gamma(\bigwedge^2_0 \cC^* \otimes \cC)$ to certain $\fg_0$-irreducible components obtained from a standard application of Kostant's theorem \cite{Kos1961,CS2009}.  (See \S\ref{S:G2P2} for the $G_2$-contact case.)
 \end{enumerate}
  
 This information is sufficient to identify the flat model for $G$-contact structures:

 \begin{theorem} \label{T:flat} Let $G \neq A_\ell,C_\ell$ be a complex simple Lie group.  Consider $(M,\cC) = (J^1(\bbC^n,\bbC),\cC)$ with standard jet space coordinates $(x^i,u,u_i)$, $0 \leq i \leq n-1$, standard CS-framing $\{ \bX_i = \partial_{x^i} + u_i \partial_u, \bU^i = \partial_{u_i} \}$ on $\cC$, and dual coframing $\{ \omega^i = dx^i, \theta_i = du_i \}$. Any of the models in Table \ref{F:ExcSimp} equivalently describes the flat $G$-contact structure.  This has (contact) symmetry algebra isomorphic to $\fg$.
 \end{theorem}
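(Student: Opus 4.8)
The plan is to prove three things in turn: that the four rows of Table \ref{F:ExcSimp} are merely the specializations of the general constructions of \S\ref{S:G-contact} to the standard framing, so that they all describe a single $G$-contact structure; that this structure is flat; and that its symmetry algebra is $\fg$.

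First I would note that substituting the standard CS-framing $\{\bX_i, \bU^i\}$ and dual coframing $\{\omega^i, \theta_i\}$ into the general formulas \eqref{E:G-SA-var}, \eqref{E:G-quartic}, \eqref{E:G-involutive}, and \eqref{E:G-Goursat} reproduces verbatim the expressions for $\cV$, $[\sfQ]$, $\cE = \widehat\cV$, and $\cF = \widetilde\cV$ in Table \ref{F:ExcSimp}. By Proposition \ref{P:g0-reduction} each of these reduces the structure algebra $\fcsp(\cC)$ to $\fg_0$, and by Proposition \ref{P:same-sym} they define the same $G$-contact structure with a common symmetry algebra. Hence it suffices to analyze the sub-adjoint variety field $\cV$ and to establish flatness and the symmetry claim for it.

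To establish flatness, I would run the harmonic-curvature recipe of \S\ref{S:Kh} on the standard framing. A direct computation on $J^1(\bbC^n,\bbC)$ gives $[\bX_i, \bX_j] = 0$, $[\bU^i, \bU^j] = 0$, and $[\bX_i, \bU^j] = -\delta_i{}^j \partial_u$, so that the only nonzero bracket among frame fields is the Heisenberg bracket into $TM/\cC$. Taking $\nabla$ so that the frame is parallel, the defining fields $\bV(\lambda, t)$ of $\cV$ have constant coefficients in this frame and are therefore $\nabla$-parallel, so the $\fg_0$-reduction is preserved and the only remaining obstruction is the torsion $T^\nabla \colon \bigwedge^2_0 \cC \to \cC$, whose frame components are computed solely from the above brackets (step (ii)). Since every bracket of frame fields that lands in $\cC$ vanishes, $T^\nabla \equiv 0$ on $\bigwedge^2_0 \cC$. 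Because $\kappa_H$ is a $\fg_0$-equivariant projection of $T^\nabla$ (step (iii)), we obtain $\kappa_H = 0$, and since $\kappa_H$ is a complete obstruction to flatness, the structure is flat.

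Finally, the fundamental theorem of Tanaka--Morimoto--\v{C}ap--Schichl (\cite{CS2009}) identifies a flat regular normal geometry of type $(G,P)$ with a neighbourhood of the homogeneous model $G/P$, whose infinitesimal (contact) symmetry algebra is isomorphic to $\fg$ and realizes the maximal symmetry dimension $\dim(\fg)$; this gives the symmetry claim. The main obstacle is really the flatness step: one must be sure that $\nabla$-parallelism of $\cV$ together with vanishing of the frame-bracket torsion forces the \emph{entire} $\kappa_H$ to vanish, not merely some of its irreducible components. This is secured precisely because $\kappa_H$ is obtained by projecting the full torsion $T^\nabla$, which we have shown to be identically zero on $\bigwedge^2_0 \cC$. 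The explicit contact vector field formulas obtained later (Theorem \ref{T:sym-flat}, Table \ref{F:explicit-sym}) then furnish an independent confirmation that the symmetry algebra is all of $\fg$.
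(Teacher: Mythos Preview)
Your proposal is correct and follows essentially the same route as the paper: specialize the general constructions \eqref{E:G-SA-var}--\eqref{E:G-Goursat} to the standard CS-framing to obtain Table~\ref{F:ExcSimp}, compute the frame brackets to see that $T^\nabla|_{\bigwedge^2_0\cC}=0$ for the parallel connection, conclude $\kappa_H=0$, and invoke the general theory of \S\ref{S:G-contact} for the symmetry claim. The paper's proof is simply a terser version of what you wrote, with the symmetry assertion left to the three bulleted consequences stated after Definition~\ref{D:G-contact}.
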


 \begin{proof} For the given CS-framing, $p_{ij} = u_{ij}$ in
 \eqref{E:G-SA-var}--\eqref{E:G-Goursat}, so we obtain Table \ref{F:ExcSimp}.
 The only non-trivial brackets among $\{ \bX_i, \bU^i \}$ are $[\bX_i,\bU^j] = \delta_i{}^j \partial_u$, so
 $\bigwedge^2_0 \cC = \tspan\{ \bX_i \wedge \bX_j, \bU^i \wedge \bU^j$ and $\bX_i \wedge \bU^j - \delta_i{}^j (\bX_0 \wedge \bU^0) \}$.  Pick $\nabla$ for which the CS-framing is parallel.  Then $T^\nabla|_{\bigwedge^2_0\cC} = 0$, so $\kappa_H = 0$ and the model is flat.
 \end{proof}
 
 \subsection{Symmetries of the flat $G$-contact structure}
  
Computing all symmetries of the flat $G$-contact structure via the PDE $\cE$ or $\cF$ is in general a hopeless task, but we will efficiently compute them via $\cV$ (see Table \ref{F:ExcSimp}).  We have $[\bS_f, \bV(\lambda,t)] \subset \widehat{T}_{\bV(\lambda,t)} \cV$, where $f$ is a generating function for a contact symmetry (see \eqref{E:Xf}).  The space of all such is equipped with the Lagrange bracket \eqref{E:Lagrange}.
 
 Clearly, $1, x^i, u_i$, and $\sfZ = 2u - x^i u_i$ are all (generating functions for) symmetries.  Indeed, $\bS_1 = \partial_u$, $\bS_{x^i} = \partial_{u_i} + x^i \partial_u$, and $\bS_{u_i} = -\partial_{x^i}$ all commute with $\bV(\lambda,t)$, while $\bS_\sfZ = x^i \partial_{x^i} + 2 u\partial_u + u_i \partial_{u_i}$ satisfies $[\bS_\sfZ,\bV(\lambda,t)] = \bV(\lambda,t)$.  (Alternatively, their prolongations to $J^2$ act trivially on all $u_{ij}$, while the defining equations for $\cE$ and $\cF$ only involve $u_{ij}$.)  Recall from \S \ref{S:SA-var} that $(G,P)$ induces a contact grading $\fg = \fg_{-2} \op \fg_{-1} \op \fg_0 \op \fg_1 \op \fg_2$.  At $o = \{ x^i = u = u_i = 0\}$, $\cC$ is spanned by all $\bS_{x^i}$ and $\bS_{u_i}$, so these correspond to $\fg_{-1}$.  Also, $[x^i,u_j] = \delta^i{}_j$, so $1 \in \fg_{-2}$.  Since $\sfZ$ acts by $-1$ on $\fg_{-1}$ and $-2$ on $\fg_{-2}$, this will serve as our grading element.  Because $\fg_-$ is known, and the brackets $\fg_{-1} \times \fg_2 \to \fg_1$ and $\fg_{-1} \times \fg_1 \to \fg_0$ are surjective, it suffices to determine $\fg_2$, which is only 1-dimensional.
  
 \begin{theorem} \label{T:sym-flat} Let $G \neq A_\ell, C_\ell$ be complex simple Lie group.  The flat $G$-contact structure from Theorem \ref{T:flat} admits the following symmetry that spans the top slot $\fg_2$ of the contact grading $\fg = \fg_{-2} \op ... \op \fg_2$:
\begin{align} \label{E:top-slot}
 f = u(u - x^i u_i) - \frac{1}{2} \fC(X^3) u_0 + \frac{1}{2} \fC^*(P^3) x^0 + \frac{9}{4} \fC_a(X^2) (\fC^*)^a(P^2),
\end{align}
 where $X = x^a \sfw_a$ and $P = u_a \sfw^a$.  Via the bracket \eqref{E:Lagrange}, $\{ x^i,u_i, f \}$ generate all of $\fg$.  (See Table \ref{F:explicit-sym}.)
 \end{theorem}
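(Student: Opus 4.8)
The plan is to work throughout with the sub-adjoint variety field $\cV$, which is legitimate by Proposition \ref{P:same-sym}: the symmetry condition for a contact field $\bS_f$ reads $[\bS_f,\bV(\lambda,t)]\in\widehat{T}_{[\bV(\lambda,t)]}\cV$ for all $[\lambda,t]$. Since $\bV$ is homogeneous of degree $3$ in $(\lambda,t)$, Euler's relation identifies $\widehat{T}_{[\bV(\lambda,t)]}\cV=\tspan\{\partial_\lambda\bV,\partial_{t^c}\bV\}$, so I would recast the condition as the existence of scalars $a,b^c$ (functions of $\lambda,t$ and of the point) with $[\bS_f,\bV]=a\,\partial_\lambda\bV+b^c\,\partial_{t^c}\bV$. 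First I would record, from \eqref{E:Xf}, the framing brackets
\begin{align*}
[\bS_f,\bX_i]&=(\bX_i f_{u_k})\,\bX_k-(\bX_i\bX_k f)\,\bU^k,\\
[\bS_f,\bU^i]&=f_{u_iu_k}\,\bX_k-(\bX_k f_{u_i}+\delta_{ik}f_u)\,\bU^k,
\end{align*}
which carry no $\partial_u$-component (as they must, $\bS_f$ being contact). Because the coefficients of $\bV$ in the framing $\{\bX_i,\bU^i\}$ are constant in $(x^i,u,u_i)$, expanding $[\bS_f,\bV]$ is then mechanical.

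Two ingredients are immediate, and I would dispatch them first. Writing $\bS_\sfZ=x^i\partial_{x^i}+2u\partial_u+u_i\partial_{u_i}$, one has $[\sfZ,h]=\bS_\sfZ(h)-2h$ for any generating function $h$; assigning weights $1,2,1$ to $x^i,u,u_i$, every term of \eqref{E:top-slot} has weight $4$, so $\bS_\sfZ(f)=4f$ and hence $[\sfZ,f]=2f$, placing $f\in\fg_2$. Moreover $[1,f]=f_u=2u-x^iu_i=\sfZ$, which confirms that the pairing $\fg_{-2}\times\fg_2\to\fg_0$ returns the grading element, exactly as the top slot (dual to $1\in\fg_{-2}$) should.

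The heart of the argument is verifying the symmetry condition at a \emph{general} point; note it is vacuous at $o$, since the weight-$2$ coefficients $\bX_i\bX_k f$, $f_{u_iu_k}$, $\bX_k f_{u_i}$, $f_u$ all vanish there. The plan is to solve for $a$ from the $\bX_0$-coefficient and for $b^c$ from the $\bX_c$-coefficients of $[\bS_f,\bV]$ (the ``$\bX$-part'', which is the simpler system), and then to check that these same $a,b^c$ reproduce the $\bU$-coefficients. The $\bU$-part is where the Jordan structure enters: inserting the explicit derivatives of $f$ produces $\fC$, $\fC^*$ and their contractions with $X=x^a\sfw_a$ and $P=u_a\sfw^a$, and the cross terms cancel only by the normalized dual-cubic identity \eqref{E:dual-cubic-id}, $(\fC^*)^{abc}\fC_{bde}\fC_{cfg}=\tfrac{4}{27}\fC_{(def}\delta_{g)}{}^a$, together with its polarizations. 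I expect this $\bU$-part to be the main obstacle: it must be pushed through uniformly for every entry of Table \ref{F:sym}, and the numerical coefficients $2,-\tfrac12,\tfrac12,\tfrac94$ in \eqref{E:top-slot} are precisely the ones that make it close up.

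Finally, once $f$ is known to be a symmetry, it spans $\fg_2$: by Theorem \ref{T:flat} the symmetry algebra is $\fg$ with its contact grading, $\dim\fg_2=\dim\fg_{-2}=1$, and $f\neq0$. For the generation claim I would argue purely from the graded bracket: $[x^i,u_j]=\delta_{ij}$ recovers $\fg_{-2}=\langle 1\rangle$; bracketing $\fg_{-1}=\langle x^i,u_i\rangle$ against $f$ fills $\fg_1$ because $\fg_{-1}\times\fg_2\to\fg_1$ is surjective; and bracketing $\fg_{-1}$ against the resulting $\fg_1$ (together with $[1,f]=\sfZ$) fills $\fg_0$ because $\fg_{-1}\times\fg_1\to\fg_0$ is surjective. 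Hence $\{x^i,u_i,f\}$ generate $\fg_{-2}\oplus\cdots\oplus\fg_2=\fg$ under the Lagrange bracket.
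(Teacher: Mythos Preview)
Your proposal is sound and would prove the theorem, but it proceeds by \emph{verification} of the stated $f$, whereas the paper \emph{derives} $f$ from scratch. The paper starts from the abstract facts that $\dim\fg_2=1$ and that $\fg_2\times\fg_{-2}\to\fz(\fg_0)$ is surjective, so there exists $f\in\fg_2$ with $f_u=\sfZ$; this forces $f=u(u-x^iu_i)+g(x^i,u_i)$ with $g$ homogeneous of degree~$4$. It then works in the affine chart $\lambda=1$, computes the components $(\rho^0,\rho^a,\mu_0,\mu_a)$ of $[\bV,\bS_f]$ in the CS-framing, and imposes the tangency condition via the row-reduced form \eqref{E:Vhat-mat} of $\widehat T_{[\bV]}\cV$, obtaining two scalar equations polynomial in $t$. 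Extracting coefficients degree by degree in $t$ successively pins down the pieces of $g$, with the dual-cubic identity \eqref{E:dual-cubic-id} entering at the final step to determine the $\fC^*(P^3)$ term. The paper's route thus yields uniqueness of $f$ as a byproduct, while yours relies on $\dim\fg_2=1$ from the general theory (which you correctly invoke). Your parametrization of $\widehat T\cV$ by $\partial_\lambda\bV,\partial_{t^c}\bV$ is equivalent to the paper's via Euler's relation; note however that at $\lambda=0$ your scheme of solving for $a$ from the $\bX_0$-coefficient degenerates, and the paper handles $\bV(0,t)$ separately (trivially, via $f_{u_0u_i}=0$). Your generation argument matches the paper's.
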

 
 \begin{proof}
 The $\fg_0$-invariant pairing $\fg_2 \times \fg_{-2} \to \fg_0$ surjects onto $\fz(\fg_0)$, so there exists $f \in \fg_2$ such that $\sfZ = [1,f] = f_u$.  Since $\sfZ = 2u - x^i u_i$, then $f = u (u - x^i u_i) + g(x^i,u_i)$.  By \eqref{E:Xf},
 \[
 \bS_f = (x^i u - g_{u_i}) \frac{d}{dx^i} + (u(u - x^i u_i) + g)\partial_u + \left( u_i (u - x^k u_k) + g_{x^i} \right)\partial_{u_i}.
 \]
 We require that $[\sfZ,f] = 2f$, which implies $x^i g_{x^i} + u_i g_{u_i} = 4g$, i.e.\ $g$ is homogeneous of degree 4.
 
 Fix $t \in W$ and let $\bV := \bV(1,t) \in \Gamma(\cC)$.  Let $[\bV,\bS_f] \in \Gamma(\cC)$ have components $(\rho^0,\rho^a, \mu_0, \mu_a)$ in the CS-framing $\{ \bX_i = \partial_{x^i} + u_i \partial_u, \bU^i = \partial_{u_i} \}$. Since $\bV(u - x^k u_k) = x^0 \frac{\fC(t^3)}{2} + \frac{3}{2} \fC(t,t,x)$, we have
 \begin{align*}
 \rho^0 = dx^0([\bV,\bS_f]) &= u + x^0 (u_0 - t^a u_a) - \bV(g_{u_0})\\
 \rho^a = dx^a([\bV,\bS_f]) &= -t^a u + x^a (u_0 - t^b u_b) - \bV(g_{u_a})\\
 \mu_0 = du_0([\bV,\bS_f]) &= \frac{\fC(t^3)}{2} (-u + x^k u_k + x^0 u_0) + \frac{3}{2} u_0 \fC(t,t,x) + \bV(g_{x^0})\\
 \mu_a = du_a([\bV,\bS_f]) &= -\frac{3\fC_a(t^2)}{2}(u - x^k u_k) + u_a\left(x^0 \frac{\fC(t^3)}{2} + \frac{3}{2} \fC(t,t,x)\right) + \bV(g_{x^a})
 \end{align*}
 Using \eqref{E:G-involutive} for $\widehat{T}_{[\bV]} \cV$, we row reduce $ \begin{psmallmatrix}  1 & 0 & \fC(t^3) & \frac{3}{2} \fC_b(t^2)\\
 0 & \delta_a{}^b & \frac{3}{2} \fC_a(t^2) & 3\fC_{ab}(t)\\
 \rho^0 & \rho^b & \mu_0 & \mu_b
 \end{psmallmatrix}
$ so that $[\bV,\bS_f] \in \widehat{T}_{[\bV]} \cV$ if and only if
 \begin{align}
  0 &= \mu_0 - \fC(t^3) \rho^0 - \frac{3}{2} \fC_a(t^2) \rho^a, \label{E:mu0}\\
  0 &= \mu_a - \frac{3}{2} \fC_a(t^2) \rho^0 - 3\fC_{ab}(t) \rho^b. \label{E:mua}
 \end{align}
  These equations are polynomial in $t$.  Extracting the $t$-degree 6,5,1,0 parts from \eqref{E:mu0} yields
  \begin{align*}
 g_{u_0 u_0} = g_{u_a u_0} = g_{x^0 x^a} = g_{x^0 x^0} = 0 \qRa g = A(x^a,u_b) + B(x^a) u_0 + \widetilde{B}(u_a) x^0 + \gamma x^0 u_0.
 \end{align*}
 Since $g$ is homogeneous of degree 4, then $A,B,\widetilde{B}$ are homogeneous of degrees $4, 3, 3$. In \eqref{E:mu0}[$t$-degree 4], set $x^0 = 0$, then differentiate with respect to $u_c$ to obtain $0 = \fC_a(t^2) \fC_b(t^2) A_{u_a u_b u_c}$.  Since $w_a = \fC_a(t^2)$ is arbitrary, then $A_{u_a u_b u_c} = 0$. Now consider \eqref{E:mua}[$t$-degree 1]:
 \begin{align*}
 0 &= 3\fC_{ab}(t) (g_{u_b x^0} - x^b u_0) - t^b g_{x^b x^a} = 3\fC_{ab}(t) (\widetilde{B}_{u_b} - x^b u_0) - t^b (A_{x^b x^a} + B_{x^b x^a} u_0),
 \end{align*}
 which splits according to $u_0$-degree.  Differentiation yields
 $B_{x^a x^b} = -3\fC_{ab}(x)$ and $A_{x^a x^b} = 3\fC_{abc} \widetilde{B}_{u_c}$.
 Since $A_{u_a u_b u_c} = 0$ and since $g$ is homogeneous of degree 4, this implies that:
 \begin{align*}
 B = -\frac{1}{2}\fC(X^3), \quad A = \frac{3}{2} \fC_a(X^2) \widetilde{B}_{u_a}.
 \end{align*}
 Observe that \eqref{E:mu0}[$t$-degree 4,3] is a contraction of \eqref{E:mua}[$t$-degree 3,2], so only the latter equations remain. Setting $x^b = 0$ in \eqref{E:mua}[$t$-degree 2] yields $\gamma = 0$. Now \eqref{E:mua}[$t$-degree 3] is at most linear in $x^0$, with
 \begin{align} \label{E:Bt}
 0 &= \left( \frac{\fC(t^3)}{2} \delta_a{}^b + \frac{3}{2} \fC_a(t^2) t^b\right) u_b - \frac{9}{2} \fC_c(t^2) \fC_{ab}(t) \widetilde{B}_{u_b u_c}
 \end{align}
 its $x^0$-degree 1 part.
 This determines the $x^0$-degree 0 part (differentiate by $u_d$ and contract with $\frac{1}{2} \fC_d(X^2)$) as well as the $t$-degree 2 part of \eqref{E:mua} (differentiate by $t^d$ and contract with $x^a$).  Thus, only \eqref{E:Bt} remains.
 
 Since $\widetilde{B}$ is cubic, write $\widetilde{B} = \beta^{abc} u_a u_b u_c$.  Contract \eqref{E:Bt} with $t^a$ and differentiate by $u_e$ to obtain:
 \begin{align}
 0 &= 2 \fC(t^3) t^e - 27\fC_b(t^2) \fC_c(t^2) \beta^{bce}, \qbox{i.e.} \beta(\fC(t^2)^2) = \frac{2}{27} \fC(t^3) t.
 \end{align}
 By \eqref{E:dual-cubic-id}, $\beta = \frac{1}{2} \fC^*$, so we obtain \eqref{E:top-slot}.  Finally, for $\bV := \bV(0,t)$ (which is a multiple of $\bU^0$), the condition $[\bV,\bS_f] \in \widehat{T}_{[\bV]} \cV = \tspan\{ \bU^i \}$ readily follows from observing that $f_{u_0 u_i} = 0$.
 \end{proof}
 
  \begin{table}[h]
 \[
 \begin{array}{|c|c|c|c|} \hline
 \fg_2 & & f & u(u - x^i u_i) - \frac{1}{2} \fC(X^3) u_0 + \frac{1}{2} \fC^*(P^3) x^0 + \frac{9}{4} \fC_a(X^2) (\fC^*)^a(P^2)\\ \hline
  \fg_1 & & [x^0,f] & x^0(u - x^i u_i) - \frac{1}{2} \fC(X^3)\\ 
 & & [x^a,f] & x^a(u - x^i u_i) + \frac{3}{2} (\fC^*)^a(P^2) x^0 + \frac{9}{2} \fC_b(X^2) (\fC^*)^{ab}(P)\\
 & & [u_0,f] & u u_0 - \frac{1}{2} \fC^*(P^3)\\
 & & [u_a,f] & uu_a + \frac{3}{2} \fC_a(X^2) u_0 - \frac{9}{2} \fC_{ab}(X) (\fC^*)^b(P^2)\\ \hline
 \fz(\fg_0) &  & [1,f] & \sfZ := 2u - x^i u_i \\ \hline
 \fg_0^{\Ss} & \ff_{1} & [x^a,[u_0,f]] & x^a u_0 - \frac{3}{2} (\fC^*)^a(P^2)\\ 
 & \ff_0 & [x^0,[u_0,f]] - \frac{1}{2} \sfZ & \sfZ^{(0)} := \frac{3}{2} x^0 u_0 + \frac{1}{2} x^c u_c \in \fz(\ff_0)\\
 & \ff_0 & [x^a,[u_b,f]] - \delta^a{}_b (\frac{1}{2}\sfZ + \frac{1}{3} \sfZ^{(0)}) & \psi^a{}_b := x^a u_b + \frac{1}{3} \delta^a{}_b x^c u_c - 9 \fC_{bc}(X) (\fC^*)^{ac}(P) \\
 & \ff_{-1} & [u_a,[x^0,f]] & u_a x^0 + \frac{3}{2} \fC_a(X^2)\\ \hline
 \fg_{-1} & & & x^i, u_i \\ \hline
 \fg_{-2} & && 1 \\ \hline
 \end{array}
 \]
 \caption{Generating functions for any complex simple $\fg$ not of type $A$ or $C$}
 \label{F:explicit-sym}
 \end{table}
 
 Assign weighted degree $+1$ to $x^i, u_i$ and $+2$ to $u$, so $\fg_k$ are polynomials of weighted degree $k+2$. 
  
 \begin{cor} \label{C:g-embed} Let $\fg$ be a complex simple Lie algebra not of type $A$ or $C$.  Then $\fg$ embeds into the space of polynomials in $x^i, u, u_i$ $(0 \leq i \leq n-1)$ of weighted degree $\leq 4$, equipped with the Lagrange bracket.
 \end{cor}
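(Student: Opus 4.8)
The plan is to read the corollary off directly from Theorem \ref{T:sym-flat} by tracking a single grading, namely weighted degree. Recall that the assignment $f \mapsto \bS_f$ of \eqref{E:Xf} is a Lie algebra isomorphism from the space of polynomial generating functions equipped with the Lagrange bracket \eqref{E:Lagrange} onto the corresponding contact vector fields on $J^1(\bbC^n,\bbC)$, intertwining $[f,g]$ with the commutator $[\bS_f,\bS_g]$; since the total derivative and the partials of a polynomial are again polynomial, the polynomials form a Lie subalgebra here. By Theorem \ref{T:sym-flat}, the symmetry algebra of the flat $G$-contact structure is isomorphic to $\fg$ and is generated under the Lagrange bracket by $x^i$, $u_i$, and the top-slot function $f$ of \eqref{E:top-slot}. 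Hence $\fg$ already sits as a Lie subalgebra of the Lagrange-bracket algebra of polynomial generating functions, and it remains only to bound the weighted degrees of its elements.

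First I would check that the Lagrange bracket is homogeneous of weighted degree $-2$. With $x^i, u_i$ of weighted degree $1$ and $u$ of weighted degree $2$, the operator $\partial_u$ lowers weighted degree by $2$, while $\partial_{x^i}$, $\partial_{u_i}$, and the total derivative $\frac{d}{dx^i} = \partial_{x^i} + u_i \partial_u$ each lower it by $1$. Inspecting the four terms of \eqref{E:Lagrange}: the first two pair a factor of weighted degree $\deg f$ (resp.\ $\deg g$) with one of weighted degree $\deg g - 2$ (resp.\ $\deg f - 2$), while the last two pair factors of weighted degrees $\deg f - 1$ and $\deg g - 1$; in every case the resulting weighted degree is $\deg f + \deg g - 2$. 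Thus the polynomial algebra is graded by weighted degree and the Lagrange bracket shifts this grading by $-2$; equivalently, the contact grading slot $\fg_k$ consists of polynomials of weighted degree $k + 2$.

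Finally I would invoke the contact (width two) nature of the adjoint grading from \S\ref{S:SA-var}: $\fg = \fg_{-2} \op \fg_{-1} \op \fg_0 \op \fg_1 \op \fg_2$ with $\fg_k = 0$ for $|k| > 2$. The weighted degrees occurring are therefore exactly $0,1,2,3,4$, realized respectively by $1$; by $x^i, u_i$; by $\sfZ = 2u - x^i u_i$ together with $\fg_0^{\Ss}$; by $\fg_1$; and by the top slot $f$ of weighted degree $4$. Table \ref{F:explicit-sym} exhibits explicit polynomial representatives for a basis of each slot, all of weighted degree $\leq 4$, which yields the desired embedding of $\fg$ into the space of polynomials in $x^i, u, u_i$ of weighted degree $\leq 4$ equipped with the Lagrange bracket.

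The only genuine content is the weight bookkeeping of the second paragraph, combined with the fact that the adjoint grading has width two; the substantive work was already done in Theorem \ref{T:sym-flat}, so I do not anticipate a serious obstacle. The one point worth stating carefully is that the ambient space of weighted-degree-$\leq 4$ polynomials is \emph{not} itself closed under the Lagrange bracket (two functions of weighted degree $4$ bracket to weighted degree $6$); it is the image of $\fg$, a bona fide finite-dimensional subalgebra, that lies inside this space, which is exactly what the assertion ``embeds into'' is meant to convey.
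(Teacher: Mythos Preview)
Your proposal is correct and follows exactly the paper's approach: the paper simply records the sentence ``Assign weighted degree $+1$ to $x^i, u_i$ and $+2$ to $u$, so $\fg_k$ are polynomials of weighted degree $k+2$'' and states the corollary without further proof, so your argument is a faithful (and more careful) expansion of precisely that reasoning. Your closing remark about the ambient degree-$\leq 4$ space not being bracket-closed is a nice clarification of what ``embeds into'' means here.
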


 \begin{remark}
 Theorem \ref{T:sym-flat} and Corollary \ref{C:g-embed} are also valid for $G = A_{n+1}$ with $\fC = 0$ and $1 \leq i \leq n$, i.e.\ the $0$-th coordinate is not distinguished (\S \ref{S:typeA}).  It is also valid for $G = A_1$: $\{ 1, 2u, u^2 \}$ is a standard $\fsl_2$-triple.
 \end{remark}

 All generating functions for $\fg$ are given as linear combinations of those specified in Table \ref{F:explicit-sym}.
 \begin{itemize}
 \item $\sfZ = \sfZ_j$ and $\sfZ^{(0)} = \sum_{i \in N(j)} \sfZ_i$, where $N(j)$ are neighbouring nodes to the contact node $j$.
 \item $\psi^a{}_b \in \ff_0$ are the only functions quadratic in $\{ x^c, u_c \}$ and independent of $u, x^0, u_0$.  Hence, their span is closed under the Poisson bracket $[f,g] = f_{x^c} g_{u_c} - g_{x^c} f_{u_c}$, i.e.\ restriction of the Lagrange bracket.
 \end{itemize}
 
 \begin{cor} \quad
 \begin{enumerate}
 \item[(i)] $G = F_4, E_6, E_7, E_8$: $\{ \psi^a{}_b \}_{1 \leq a,b \leq \dim(W)}$ spans $\ff_0^{\Ss} \cong A_2, A_2 \times A_2, A_5, E_6$ respectively; $\dim(\fz(\ff_0)) = 1$.
 
 \item[(ii)] $G = \SO_{m+6}$, $m \geq 1$: $W = \JS_m = \bbC^m\op \bbC$, $\psi^\infty{}_\sfa = \psi^\sfa{}_\infty = 0$, $\sfZ^{(\infty)} := \psi^\infty{}_\infty = \frac{4}{3} x^\infty u_\infty - \frac{2}{3} x^\sfa u_\sfa$.
 \begin{itemize}
 \item $m \geq 3$: $\ff_0^{\Ss} \cong \fso_m$ is the span of all $x^\sfa u_\sfb - x_\sfb u^\sfa$.  (See the adapted basis $\{ \sfw_\sfa \}_{\sfa = 1}^m$ in \S \ref{S:LM} and use $\langle \cdot, \cdot \rangle$ and its inverse to lower and raise indices.)  Also, $\fz(\ff_0) = \tspan \{ \sfZ^{(0)},\sfZ^{(\infty)} \}$.
 \item $m=1$ or $m=2$: $\ff_0^{\Ss} = 0$ and $\fz(\ff_0) = \tspan \{ \sfZ^{(0)},\sfZ^{(\infty)}, \sfZ^{(1)} \}$, where $\sfZ^{(1)} := x^1 u_1 - x_1 u^1$.
 \end{itemize}
  
 \end{enumerate}
 \end{cor}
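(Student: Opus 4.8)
The plan is to extract $\ff_0$ from the explicit generators of Table~\ref{F:explicit-sym} and then separate its center from its semisimple part. By Theorem~\ref{T:sym-flat} the functions $\{x^i, u_i, f\}$ generate all of $\fg$ under the Lagrange bracket, and the entries of Table~\ref{F:explicit-sym} lying in the degree-zero slot $\ff_0$ of the grading $\ff = \ff_{-1} \op \ff_0 \op \ff_1$ are precisely $\sfZ^{(0)}$ together with the $\psi^a{}_b$; hence $\ff_0 = \tspan\{\sfZ^{(0)}\} + \tspan\{\psi^a{}_b\}$. On the functions bilinear in $\{x^c, u_c\}$ and independent of $u, x^0, u_0$, the Lagrange bracket restricts to the Poisson bracket $[g,h] = g_{x^c} h_{u_c} - h_{x^c} g_{u_c}$, under which the monomials $x^a u_b$ realize $\fgl(W)$ and the $\psi^a{}_b$ are their projections into $\ff_0$, the cubic correction $-9\fC_{bc}(X)(\fC^*)^{ac}(P)$ effecting this projection. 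Since $\sfZ^{(0)}$ is the unique $\ff_0$-generator involving $x^0 u_0$, it lies outside $\tspan\{\psi^a{}_b\}$; moreover $\ff_0^{\Ss} = [\ff_0, \ff_0] \subseteq \tspan\{\psi^a{}_b\}$ because the $\psi^a{}_b$ are bracket-closed and $\sfZ^{(0)}$ is central. Thus in each case it remains to compute the $\psi^a{}_b$ explicitly and to determine $\fz(\ff_0)$.

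For part (i), $G \in \{F_4, E_6, E_7, E_8\}$, $W = \cJ_3(\bbA)$, and $\fC = \det$ is the cubic norm. In each of these Dynkin diagrams the contact node $j$ has a single neighbour, so the parabolic $\fq \subset \ff$ has one-dimensional Levi center and $\fz(\ff_0) = \tspan\{\sfZ^{(0)}\}$, giving $\dim(\fz(\ff_0)) = 1$. Combined with $\ff_0^{\Ss} \subseteq \tspan\{\psi^a{}_b\}$ and $\dim\tspan\{\psi^a{}_b\} = \dim\ff_0 - 1 = \dim\ff_0^{\Ss}$, this forces $\tspan\{\psi^a{}_b\} = \ff_0^{\Ss}$, the Lie algebra preserving the cubic norm on $\cJ_3(\bbA)$. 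By the magic rectangle (Table~\ref{F:extended-magic}) this is $A_2, A_2 \times A_2, A_5, E_6$ respectively.

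For part (ii), $G = \SO_{m+6}$ and $W = \JS_m$. I would compute the $\psi^a{}_b$ in the adapted basis of \S\ref{S:LM}, where the only nonzero components of $\fC$ are $\fC_{\infty\sfa\sfb} = g_{\sfa\sfb} := \langle\sfw_\sfa, \sfw_\sfb\rangle$, and those of $\fC^*$ are dually built from the inverse $g^{\sfa\sfb}$ (Table~\ref{F:dual-cubic}); indices are raised and lowered by $g$, with $g_{\sfa\sfb} = \delta_{\sfb,\sfa'}$ and $\sfa' = m+1-\sfa$. Tracking index types through $\fC_{bc}(X)(\fC^*)^{ac}(P)$, the mixed blocks collapse to $\psi^\infty{}_\sfa = \psi^\sfa{}_\infty = 0$, while the purely infinite component is $\psi^\infty{}_\infty = \frac{4}{3} x^\infty u_\infty - \frac{2}{3} x^\sfa u_\sfa =: \sfZ^{(\infty)}$. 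For the $\{1, \dots, m\}$-block, the normalization \eqref{E:dual-cubic-id} fixes the dual-cubic scale so that the correction cancels the symmetric part of $x^\sfa u_\sfb$, leaving $\psi^\sfa{}_\sfb = (x^\sfa u_\sfb - x_\sfb u^\sfa) - \frac{1}{2}\delta^\sfa{}_\sfb \sfZ^{(\infty)}$. Therefore $\tspan\{\psi^a{}_b\} = \tspan\{\sfZ^{(\infty)}\} + \tspan\{x^\sfa u_\sfb - x_\sfb u^\sfa\}$, and the antisymmetric combinations close under the Poisson bracket to the orthogonal algebra $\fso_m$ of $g$. For $m \geq 3$ this is semisimple, whence $\ff_0^{\Ss} \cong \fso_m$ and $\fz(\ff_0) = \tspan\{\sfZ^{(0)}, \sfZ^{(\infty)}\}$; for $m = 1, 2$ it is abelian, whence $\ff_0^{\Ss} = 0$ and its generator $\sfZ^{(1)} = x^1 u_1 - x_1 u^1$ joins the center (with $\sfZ^{(1)} = 0$ when $m = 1$).

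The crux is the spin-factor computation of $\psi^\sfa{}_\sfb$ in part (ii): one must carry the index-type bookkeeping in $\fC_{bc}(X)(\fC^*)^{ac}(P)$ through the involution $\sfa \mapsto \sfa'$ and verify that \eqref{E:dual-cubic-id} delivers exactly the constant needed to annihilate the symmetric part of $x^\sfa u_\sfb$ and isolate the $\fso_m$-generator. Once this cancellation and the explicit form of $\sfZ^{(\infty)}$ are confirmed, both the identification of semisimple type and the dimension of the center follow from the magic rectangle and the elementary fact that $\fso_m$ is semisimple exactly when $m \geq 3$.
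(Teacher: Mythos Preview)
Your approach is correct and essentially matches the paper's: both reduce part (i) to the magic rectangle (Table~\ref{F:extended-magic}) and handle part (ii) by explicitly computing $\fC_{ab}(X)$ and $(\fC^*)^{ab}(P)$ in the adapted $\JS_m$-basis and substituting into $\psi^a{}_b$. The paper's proof is terser---it simply writes out $\fC_{\sfa\sfb}(t) = \tfrac{1}{3} q_{\sfa\sfb}\lambda$, $\fC_{\sfa\infty}(t) = \tfrac{1}{3} q_{\sfa\sfb} v^\sfb$ and the dual expressions, then records the substitution result $\psi^\sfa{}_\sfb + \tfrac{1}{2}\delta^\sfa{}_\sfb \sfZ^{(\infty)} = x^\sfa u_\sfb - x_\sfb u^\sfa$---whereas you supply more surrounding scaffolding (the Poisson-bracket realization of $\fgl(W)$, the single-neighbour argument for $\dim\fz(\ff_0)=1$ in the exceptional cases) but leave the actual index computation as something ``to be confirmed.'' That computation is short and exactly as you describe; carrying it out would complete your argument.
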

 
 \begin{proof} Table \ref{F:extended-magic} yields the assertions for $\ff_0^{\Ss}$.  For (ii), if $q_{\sfa\sfb}$ are the components of $\langle \cdot, \cdot \rangle$ with respect to $\{ \sfw_a \}$, then for $t = (v,\lambda)$, we have $\fC(t^3) = \fC_{abc} t^a t^b t^c = q_{\sfa\sfb} v^\sfa v^\sfb \lambda$, so $\fC_{\sfa\sfb}(t) = \frac{1}{3} q_{\sfa\sfb} \lambda$ and $\fC_{\sfa\infty}(t) = \frac{1}{3} q_{\sfa\sfb} v^\sfb$.  Consulting Table \ref{F:dual-cubic}, we have for $s^* = (v^*,\mu)$, $(\fC^*)^{\sfa\sfb}(s^*) = \frac{1}{3} q^{\sfa\sfb} \mu$ and $(\fC^*)^{\sfa\infty}(s^*) = \frac{1}{3} q^{\sfa\sfb} v_\sfb$.  Substitution into $\psi^a{}_b$ then yields the result.  (In particular, $\psi^\sfa{}_\sfb + \frac{1}{2} \delta^\sfa{}_\sfb \sfZ^{(\infty)} = x^\sfa u_\sfb - x_\sfb u^\sfa$.)
 \end{proof}

 \begin{example}[$\fg = F_4$, $\ff_0^{\Ss} = A_2$]  On $W = \cJ_3(\bbR_\bbC)$, $\fC(t^3) = \det(t)$.  Let $\sfe^i{}_j$ denote the $3 \times 3$ matrix with 1 in the $(i,j)$-position and 0 otherwise.  Consider the ordered basis of $W = \cJ_3(\bbR_\bbC)$ given by
 \[
 \sfw_1 = \sfe^1{}_1, \quad 
 \sfw_2 = \sfe^2{}_2, \quad
 \sfw_3 = \sfe^3{}_3, \quad
 \sfw_4 = \sfe^1{}_2 + \sfe^2{}_1, \quad
 \sfw_5 = \sfe^1{}_3 + \sfe^3{}_1, \quad
 \sfw_6 = \sfe^2{}_3 + \sfe^3{}_2.
 \]
 This basis determines coordinates $\{ x^a \}$ on $W$ and dual coordinates $\{ u_a \}$ on $W^*$.
 A Lie algebra isomorphism from $A_2 = \fsl_3$ to $\tspan\{ \psi^a{}_b \}_{1 \leq a,b \leq \dim(W) }$ is given by
 \begin{align*}
 \begin{array}{l}
 \begin{array}{l@{\,\,\mapsto\,\,}c}
 \diag(\frac{2}{3},-\frac{1}{3},-\frac{1}{3}) & 
 \psi^1{}_1 = \frac{4}{3} x^1 u_1 - \frac{2}{3} x^2 u_2 - \frac{2}{3} x^3 u_3 + \frac{1}{3} x^4 u_4 + \frac{1}{3} x^5 u_5  - \frac{2}{3} x^6 u_6; \\
 \diag(\frac{1}{3},\frac{1}{3},-\frac{2}{3}) &
 -\psi^2{}_2 = \frac{2}{3} x^1 u_1 - \frac{4}{3} x^2 u_2 + \frac{2}{3} x^3 u_3 - \frac{1}{3} x^4 u_4 + \frac{2}{3} x^5 u_5  - \frac{1}{3} x^6 u_6;
 \end{array}\\
 \begin{array}{c@{\mapsto}c@{\qquad\qquad}c@{\mapsto}c}
 \sfe^1{}_2 & \psi^3{}_5 = 2x^5 u_1+ x^6 u_4 + x^3 u_5; & 
 \sfe^2{}_1 & \psi^1{}_5 = x^1 u_5 + x^4 u_6 + 2 x^5 u_3;\\
 \sfe^2{}_3 & \psi^2{}_6 = x^4 u_5 + x^2 u_6 + 2 x^6 u_3; & 
 \sfe^3{}_2 & \psi^3{}_6 = x^5 u_4 + 2 x^6 u_2 + x^3 u_6;\\
 \sfe^1{}_3 & \psi^2{}_4 = 2x^4 u_1 + x^2 u_4 + x^6 u_5; & 
 \sfe^3{}_1 & \psi^1{}_4 = x^1 u_4 + 2 x^4 u_2 + x^5 u_6.
 \end{array}
 \end{array}
 \end{align*}
 \end{example}

 The following result will be used in \S \ref{S:involutive} and \S \ref{S:Monge}.
 
 \begin{prop} \label{P:skew-cubic} Consider the map $\Psi: \tEnd(W) \to W^* \otimes \bigwedge^2 W^*$ given by $A^a{}_b \mapsto \fC_{ab[c} A^a{}_{d]}$. Then
 \[
 \ker(\Psi) = \left\{ \begin{array}{ll}
 \tspan\{ \id_W \}, & \mbox{if } W \neq \JS_1;\\
 \tspan\{ \id_W, (\sfw_1)^* \otimes \sfw_\infty \}, & \mbox{if } W = \JS_1.\\
 \end{array} \right.
 \]
 \end{prop}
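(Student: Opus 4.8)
The plan is to recast membership in $\ker(\Psi)$ as a simultaneous \emph{self-adjointness} condition and then analyse it family-by-family. Unwinding indices, set $T^{(A)}_{bcd} := \fC_{abc}A^a{}_d$; since $\fC$ is totally symmetric, $T^{(A)}$ is automatically symmetric in $(b,c)$, so $\Psi(A) = T^{(A)}_{b[cd]}$ vanishes precisely when $T^{(A)}$ is \emph{fully} symmetric. Equivalently, $A$ is self-adjoint with respect to every symmetric bilinear form $\fC_w := \fC(w,\cdot,\cdot)$, $w\in W$. Taking $A = \id_W$ gives $T^{(A)} = \fC\in S^3 W^*$, so $\id_W\in\ker(\Psi)$ always; and since $\Psi$ is built solely from the $\ff_0$-invariant $\fC$, it is $\ff_0$-equivariant, so $\ker(\Psi)$ is an $\ff_0$-submodule of $\fgl(W) = \bbC\,\id_W\oplus\fsl(W)$. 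By Lemma \ref{L:C-inj} the contraction $A\mapsto T^{(A)}$ is injective (hence, e.g., $\ff_0^{\Ss}\cap\ker(\Psi)=0$: derivations preserve $\fC$, so $\mathrm{Sym}(T^{(A)})=0$, which together with full symmetry would force $T^{(A)}=0$). The goal is to show $\ker(\Psi) = \bbC\,\id_W$ apart from the stated exception.

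I would dispose of the small and non-simple families by direct computation. For $\GJ$ ($G_2$, $\dim W = 1$) we have $\bigwedge^2 W^* = 0$, so $\Psi\equiv 0$ and $\ker(\Psi) = \tEnd(W) = \bbC\,\id_W$. For a spin factor $\JS_m$ I would use the adapted basis of \S\ref{S:LM}, in which the only nonzero components of $\fC$ are $\fC_{\infty\sfa\sfb} = q_{\sfa\sfb}$ with $q$ nondegenerate. Full symmetry of $T^{(A)}$ then yields, from the all-finite indices, the two-finite-one-$\infty$ indices, and the one-finite-two-$\infty$ indices respectively: $q_{\sfb\sfc}A^\infty{}_\sfd = q_{\sfb\sfd}A^\infty{}_\sfc$; the finite block of $A$ equals $A^\infty{}_\infty\,\id$; and $A^\sfa{}_\infty = 0$. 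When $m\ge 2$ one may pick distinct finite indices $\sfc\neq\sfd$, so nondegeneracy of $q$ forces $A^\infty{}_\sfc = 0$, giving $A = A^\infty{}_\infty\,\id_W$. When $m = 1$ there is a single finite index, the relation $q_{\sfb\sfc}A^\infty{}_\sfd = q_{\sfb\sfd}A^\infty{}_\sfc$ is vacuous, and $A^\infty{}_1$ is unconstrained, producing exactly the extra generator $(\sfw_1)^*\otimes\sfw_\infty$. For $\cJ_3(\underline{0})$ ($D_4$) the cubic is $\fC(t^3)=\det(t)$ on diagonal matrices; self-adjointness with respect to all $\fC_w$ forces $A$ diagonal in the idempotent basis, and self-adjointness with respect to the nondegenerate form $\fC_1\propto \tr(v)\tr(v') - \tr(vv')$ then forces the diagonal entries equal, so $A = \lambda\,\id_W$.

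For $W = \cJ_3(\bbA)$ with $\bbA\in\{\bbR_\bbC,\bbC_\bbC,\bbH_\bbC,\bbO_\bbC\}$ (the $F_4,E_6,E_7,E_8$ cases) I would choose $w_0 = 1$, for which $\fC_1\propto \tr(v)\tr(v')-\tr(vv')$ is nondegenerate. Self-adjointness at $w_0$ gives $A^\top = C_1 A C_1^{-1}$, and substituting into self-adjointness at arbitrary $w$ yields $[\,A,\ C_1^{-1}C_w\,] = 0$ for all $w\in W$. Thus $A$ lies in the centralizer of the structural multiplication operators $M_w := C_1^{-1}C_w$ (with $M_1 = \id_W$). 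Since these algebras are \emph{central simple}, this centralizer (their centroid) is $\bbC\,\id_W$, whence $A=\lambda\,\id_W$. Equivalently, one may argue representation-theoretically: $\ker(\Psi)\cap\fsl(W)$ is an $\ff_0^{\Ss}$-submodule of $\fsl(W)$, whose irreducible constituents are known (e.g. $\fsl_{27} = \mathbf{78}\oplus\mathbf{650}$ under $E_6$ for $E_8$), and $\Psi$ is seen to be nonzero on each by evaluating on a single representative.

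The main obstacle is this last family. The clean conclusion rests on identifying the joint centralizer of the operators $M_w = C_1^{-1}C_w$ with the centroid of the cubic Jordan algebra and invoking central simplicity; making this identification precise requires the trace-form/cubic-form relations for cubic Jordan algebras, and in the non-simple cases ($\cJ_3(\underline{0})$ and $\JS_m$) one genuinely needs the self-adjointness constraint \emph{in addition} to commutation to collapse the centralizer to scalars. The accompanying delicate point is the bookkeeping for $\JS_1$: one must verify that having a single finite index removes precisely the constraint $q_{\sfb\sfc}A^\infty{}_\sfd = q_{\sfb\sfd}A^\infty{}_\sfc$ that would otherwise kill $(\sfw_1)^*\otimes\sfw_\infty$, and that all remaining constraints are consistent with this one extra generator.
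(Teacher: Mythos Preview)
Your self-adjointness reformulation is correct and clarifying, and your treatment of $\GJ$, $\JS_m$, and $\cJ_3(\underline{0})$ by direct computation matches the paper (which subsumes $\cJ_3(\underline{0})$ under $\JS_2$). Your parenthetical observation that $\ff_0^{\Ss}\cap\ker(\Psi)=0$ --- because a derivation of $\fC$ has $\mathrm{Sym}(T^{(A)})=0$, so full symmetry forces $T^{(A)}=0$, hence $A=0$ by Lemma~\ref{L:C-inj} --- is slick and dispatches half of the remaining work for free.

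For $\cJ_3(\bbA)$ with $\bbA\neq\underline{0}$, your route (a) is genuinely different from the paper's. The paper argues purely by representation theory: it decomposes $\tEnd(W)\cong\bbC\oplus\ff_0^{\Ss}\oplus\tEnd_0(W)$ into $\ff_0^{\Ss}$-irreducibles (the last being the Cartan product of $W$ and $W^*$), invokes Schur, and checks $\Psi\neq 0$ on each nontrivial summand by evaluating on a single element --- a product of highest-weight vectors for $\tEnd_0(W)$, and the explicit $\psi^a{}_b$ of Table~\ref{F:explicit-sym} together with identity~\eqref{E:dual-cubic-id} for $\ff_0^{\Ss}$. Your alternative (b) is this argument in outline, not carried out.

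Your route (a) has a real gap at exactly the point you flag. You correctly derive that $A$ must commute with every $M_w=C_1^{-1}C_w$, but the assertion that this commutant is $\bbC\,\id_W$ is unproved. The $M_w$ are \emph{not} the Jordan multiplications $L_w$: unwinding $\fC_1(u,M_w v)=\fC(w,u,v)$ for $\cJ_3(\bbA)$ shows $M_w$ is $L_w$ plus trace-type corrections, so the identification ``commutant of $\{M_w\}$ equals centroid of the Jordan algebra'' is not immediate. One could repair this by proving the associative algebra generated by $\{M_w\}$ acts irreducibly on $W$ (then Schur over $\bbC$ finishes), or by relating the $M_w$ to the quadratic representation, but either route imports structure theory of cubic Jordan algebras that the paper's representation-theoretic argument avoids entirely. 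As written, step (a) is incomplete.
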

 
 \begin{proof}  Let $A \in \ker(\Psi)$.  The $W = \GJ$ case is trivial and the $W = \JS_1$ case follows easily.
 \begin{enumerate}
  \item $W = \JS_m$, $m \geq 2$: If $b=d=\infty \neq \sfc$, then $A^{\sfc'}{}_\infty = 0$, where $\sfc' = m+1-\sfc$. Let $1\leq \sfb,\sfc \leq m$.
 \begin{itemize}
 \item $\sfc \neq \sfd = \sfb'$: We have $A^\infty{}_\sfc = 0$. (Note $m \geq 2$ was used here.)
 \item $d=\infty$: If $\sfc \neq \sfb'$, then $A^{\sfb'}{}_\sfc = 0$.  If $\sfc = \sfb'$, then $A^\infty{}_\infty = A^\sfc{}_\sfc$ (no sum).
 \end{itemize}

 \item $W = \cJ_3(\bbA)$, $\bbA \neq \underline{0}$: $\tEnd(W)$ contains $\ff_0$ and the Cartan product $\tEnd_0(W)$ of $W$ and $W^*$.
  \[
 \begin{array}{|c|cccc|}\hline
 \ff_0^{\Ss} & A_2 & A_2 \times A_2 & A_5 & E_6\\
 \ff_0^{\Ss}\mbox{-weight of $W$} & 2\lambda_1 & \lambda_1 + \lambda_1' & \lambda_2 & \lambda_6\\
 \ff_0^{\Ss}\mbox{-weight of $\tEnd_0(W)$} & 2\lambda_1 + 2\lambda_2 & \lambda_1 + \lambda_2 + \lambda_1' + \lambda_2' & \lambda_2 + \lambda_4 & \lambda_1 + \lambda_6 \\
 \dim(\tEnd_0(W)) & 27 & 64 & 189 & 650\\ \hline
 \end{array}
 \]
 By comparing dimensions, it follows that $\tEnd(W)  \cong \bbC \op \ff_0^{\Ss} \op \tEnd_0(W)$ as $\ff_0^{\Ss}$-irreps.  By Schur's lemma, it suffices to verify that $\Psi \neq 0$ on $\ff_0^{\Ss}$ and $\tEnd_0(W)$.
 \begin{itemize}
 \item $\tEnd_0(W)$:  take highest weight vectors for $W$ and $W^*$, so their product is a highest weight vector $A \in \tEnd_0(W)$.  In an adapted weight basis, we have $A^a{}_b = 0$ except for $A^1{}_{n-1} \neq 0$.  Injectivity of $\fC : W \to S^2 W^*$ implies that $\Psi(A) \neq 0$. 
 \item $\ff_0^{\Ss}$: From \S \ref{S:LM}, we know $[\ff_{-1}, V_0] = V_{-1} = W$.  Explicitly, $V_0 = \tspan\{ u_0 \}$, $W = V_{-1} = \tspan\{ u_a \}$, $\ff_{-1} = \tspan\{ u_a x^0 + \frac{3}{2} \fC_a(X^2) \}$, and $[u_a x^0 + \frac{3}{2} \fC_a(X^2),u_0] = u_a$.  Referring to Table \ref{F:explicit-sym}, let $A = \psi^a{}_b \in \ff_0$, so $[A,u_d] = A^k{}_d u_k$, where $A^k{}_d = \delta^a{}_d \delta^k{}_b + \frac{1}{3} \delta^a{}_b \delta^k{}_d - 9 \fC_{bed} (\fC^*)^{aek}$.  Evaluate $\Psi(A)$.  Use \eqref{E:dual-cubic-id} and set $a =d \neq c$.  Then
 \begin{align*}
 \fC_{kh[c} A^k{}_{d]} &= \fC_{kh[c} \delta^a{}_{d]} \delta^k{}_b + \frac{1}{3} \fC_{kh[c} \delta^k{}_{d]} \delta^a{}_b - 9 \fC_{kh[c} \fC_{d]be}(\fC^*)^{aek} = \fC_{bh[c} \delta_{d]}{}^a = \frac{1}{2} \fC_{bhc} \not\equiv 0.
 \end{align*}
 \end{itemize}
 \end{enumerate}
 \end{proof}

 \subsection{The PDE system $\cE$}
 \label{S:E-PDE}

Let $G \neq A_\ell, C_\ell$.  Consider the PDE $\cE \subset J^2$ for the flat $G$-contact structure.  As in \S \ref{S:contact}, we obtain a PD-manifold $(\cE; \cD, \widetilde\cC)$ whose symmetries correspond to external symmetries of $\cE \subset J^2$.
  
 We have local coordinates $(x^i,u,u_i,t^a)$ on $\cE$ adapted to $\cE \to M^{2n+1}$, so $\dim(\cE) = 3n$.  While $\widetilde\cC$ is given by $\fann\{ du - u_i dx^i \} = \tspan\{ \partial_{x^i} + u_i \partial_u, \partial_{u_i}, \partial_{t^a} \}$, $\cD$ has rank $2n-1$ and is spanned by
 \begin{align} \label{E:E-D-framing}
 \bT_a = \partial_{t^a}, \quad
 \bX_i = \partial_{x^i} + u_i \partial_u + u_{ij} \partial_{u_j}, \qbox{where} 
 (u_{ij})
 = \left( \begin{array}{c|c} \fC(t^3) & \frac{3}{2} \fC_b(t^2)\\[0.05in] \hline \frac{3}{2} \fC_a(t^2) & 3\fC_{ab}(t)  \end{array} \right).
 \end{align}
 Let us consider the subordinate structure $(\cE,\cD)$.

 \subsubsection{Involutivity}
 \label{S:involutive}

 The dual Pfaffian system to $\cD$ is $\cI = \tspan\{ \sigma, \theta_i \}$, where
 \begin{align} \label{E:EV-EDS}
 \sigma = du - u_i dx^i, \quad
 \theta_0 = du_0 - \fC(t^3) dx^0 - \frac{3}{2} \fC_a(t^2) dx^a, \quad
 \theta_a = du_a - \frac{3}{2} \fC_a(t^2) dx^0 - 3 \fC_{ab}(t) dx^b.
 \end{align}
 Defining $\widetilde\theta_0 = \theta_0 - t^a \theta_a$, $\omega^0 = dx^0$, 
$\omega^b = 3(dx^b + t^b dx^0)$, and $\pi^c = dt^c$, we have
 \begin{align} \label{E:EV-streq}
 d\sigma \equiv 0, \quad d\widetilde\theta_0 \equiv 0, \quad 
 d\theta_a \equiv \fC_{abc} \omega^b \wedge \pi^c \qquad \mod \cI.
 \end{align}
 Letting $\cJ = \tspan\{ \sigma, \theta_i, \omega^i \}$, then $(\cI,\cJ)$ is a linear Pfaffian system, i.e. $d\cI \equiv 0 \mod \cJ$.  Its corresponding tableau is involutive (in the sense of Cartan--K\"ahler \cite{Olv1995, IL2003}) only in two cases:
 
 \begin{theorem} \label{T:involutive} Given $G \neq A_\ell,C_\ell$ a complex simple Lie group, the tableau for $(\cI,\cJ)$ is involutive if and only if $G = G_2$ or $B_3$.  These cases have single nonzero Cartan character $s_1 = 1$ or $s_1 = 2$ respectively.
 \end{theorem}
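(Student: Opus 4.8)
The plan is to apply Cartan's test to the linear Pfaffian system $(\cI,\cJ)$ using the structure equations \eqref{E:EV-streq}. Since $d\sigma \equiv d\widetilde\theta_0 \equiv 0 \pmod{\cI}$ and $d\theta_a \equiv \fC_{abc}\,\omega^b \wedge \pi^c$ carries no $\omega^i \wedge \omega^j$ part, the system is torsion-free and I would pass directly to the tableau. Let $V$ be the $n$-dimensional space dual to $\tspan\{\omega^i\}$, with basis $e_0,\dots,e_{n-1}$ and $e_b \leftrightarrow \sfw_b$ for $b \ge 1$, and regard the $\pi^c = dt^c$ as the $(n-1)$ free $1$-forms. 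Then the tableau $A \subset \tHom(V,W^*)$ is spanned, as $\xi$ ranges over $W$, by the maps $P_\xi$ with $P_\xi(e_0) = 0$ and $P_\xi(e_b) = -\fC(\xi,\sfw_b,\cdot)$. By Lemma \ref{L:C-inj} the assignment $\xi \mapsto P_\xi$ is injective, so $\dim A = n-1$; hence the Cartan characters (for a generic flag) satisfy $s_1 + \dots + s_n = \dim A = n-1$, and therefore $s_1 + 2s_2 + \dots + n s_n \ge n-1$.

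The decisive step is the prolongation $A^{(1)}$. An element is a linear map $V \to A$, say $e_i \mapsto P_{\xi_i}$ with $\xi_i \in W$, subject to the symmetry $P_{\xi_i}(e_j) = P_{\xi_j}(e_i)$ for all $i,j$. The case $j=0$ forces $\fC(\xi_0,\sfw_b,\cdot) = 0$ for all $b$, hence $\xi_0 = 0$ by Lemma \ref{L:C-inj}; writing $T \in \tEnd(W)$ for the map $\sfw_a \mapsto \xi_a$, the remaining relations say exactly that $\fC_{ebc}T^e{}_d$ is totally symmetric in $b,c,d$ (using that it is already symmetric in $(b,c)$), which is the equation $\Psi(T) = 0$ of Proposition \ref{P:skew-cubic}. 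Thus $A^{(1)} \cong \ker(\Psi)$, and Proposition \ref{P:skew-cubic} gives $\dim A^{(1)} = 1$ in every case except $W = \JS_1$ (that is, $G = B_3$), where $\dim A^{(1)} = 2$. I expect this identification of the prolongation with $\ker(\Psi)$ to be the crux; everything else is bookkeeping with Cartan's test.

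Finally, Cartan's test asserts involutivity precisely when $\dim A^{(1)} = s_1 + 2s_2 + \dots + n s_n$ (the inequality $\le$ holding always). For $G \neq G_2, B_3$ one has $n \ge 3$ and $W \neq \JS_1$, so $\dim A^{(1)} = 1 < n-1 \le s_1 + 2s_2 + \dots$, a strict inequality, and the tableau is not involutive. For $G_2$, $n = 2$ forces $s_1 = \dim A = 1$ with all higher characters zero, so $\dim A^{(1)} = 1 = s_1$ and the tableau is involutive with $s_1 = 1$. For $B_3$, $W = \JS_1$ and $\dim A^{(1)} = 2 = \dim A$; here I would check directly from $\fC(t^3) = (t^1)^2 t^\infty$ that $\fC(w,\cdot,\cdot)$ is nondegenerate for generic $w$, which gives $s_1 = 2$ and $s_2 = 0$, whence $s_1 + 2 s_2 = 2 = \dim A^{(1)}$ and the tableau is involutive with $s_1 = 2$. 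This establishes the equivalence together with the stated values of the single nonzero character.
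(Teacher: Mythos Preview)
Your proof is correct and follows essentially the same approach as the paper: both identify the prolongation $A^{(1)}$ with $\ker(\Psi)$ via Proposition~\ref{P:skew-cubic}, invoke Lemma~\ref{L:C-inj} for the injectivity of $\fC$, and then apply Cartan's test. The only cosmetic difference is that the paper argues non-involutivity for $G\neq G_2,B_3$ by assuming $s_1+2s_2+\cdots=1$ and deducing $\dim W=1$, whereas you use the cleaner lower bound $\sum_i i s_i \ge \sum_i s_i = \dim A = n-1$ directly.
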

 
 \begin{proof} The {\em degree of indeterminacy} $r^{(1)}$ of $(\cI,\cJ)$ is the dimension of the space of all $A^c{}_i$ such that the replacements $\pi^c \mapsto \pi^c + A^c{}_i \omega^i$ preserve \eqref{E:EV-streq}, i.e.\ $0 = \fC_{ac[b} A^c{}_{i]}$.   (Clearly, $A^c{}_i = \delta^c{}_i \lambda$ solves this.)  Thus, $A^c{}_0 = 0$, so $0 = \fC_{ac[b} A^c{}_{d]}$ remains.  By Proposition \ref{P:skew-cubic}, $r^{(1)} = 1$ if $G \neq B_3$ and $r^{(1)} = 2$ if $G = B_3$.

The only non-trivial part of the tableau associated to $(\cI,\cJ)$ is the 1-form valued submatrix with symmetric entries $\cT_{ab} = \fC_{abc} \pi^c$.  The Cartan characters $s_1, s_2,...$ are obtained as follows: $s_1$ is the maximal number of linearly independent 1-forms in the first column, $s_2$ is the maximal number of independent  1-forms in the second column that are independent of the first column, etc.  These characters are computed under the assumption of working in a generically chosen basis for the tableau, and we have $s_1 \geq s_2 \geq ... \geq 0$.  Cartan's test for involutivity of $(\cI,\cJ)$ is that $s_1 + 2 s_2 + 3 s_3 + ... = r^{(1)}$.

 Suppose $G \neq B_3$, so $r^{(1)} = 1$.  Involutivity forces $s_1 = 1$ and $s_2 = s_3 = ... = 0$, so all entries of the tableau are linearly dependent.  But $\fC : W \to S^2 W^*$ is injective (Lemma \ref{L:C-inj}), so $\dim(W) = 1$, i.e.\ $G = G_2$.
 
 Suppose $G = B_3$, so $r^{(1)} = 2$.  Thus, $W = \JS_1$ and the only non-trivial component of $\fC$ is $\fC_{11\infty} = 1$, so $\cT = \begin{psmallmatrix} \pi^\infty & \pi^1\\ \pi^1 & 0\end{psmallmatrix}$.  The only nonzero Cartan character is $s_1 = 2$ and Cartan's test is satisfied.
 \end{proof}

 In the involutive cases $W = \GJ$ and $W = \JS_1$, the general integral manifold of \eqref{E:EV-EDS} will depend on {\em $s_1$ functions of one variable}. In all non-involutive cases, we need to prolong the system: consider the bundle $\widetilde\cE \to \cE$ with 1-dimensional fibres, fibre coordinate $\lambda$, equipped with $\widetilde\cI$ consisting of (the pullback of) $\cI$ together with the forms $\widetilde\pi^c := \pi^c + \lambda \omega^c$.  Then $d\cI \equiv 0 \mod \widetilde\cI$, but
 \[
 d\widetilde\pi^c \equiv (d\lambda + 3\lambda^2 dx^0) \wedge \omega^c \quad\mod \widetilde\cI.
 \]
 This system has torsion, so we must restrict to any submanifold $\widetilde\cE_0 \subset \widetilde\cE$ on which $d\lambda + 3\lambda^2 dx^0 = 0$.  Then $\widetilde\cI|_{\widetilde\cE_0}$ is a rank $2n$ Frobenius system on the $3n$-manifold $\widetilde\cE_0$.  Together with an additional arbitrary constant parametrizing the possible $\widetilde\cE_0$, the general integral manifold of $\cI$ will depend on $2n+1$  arbitrary {\em constants}.
 
  \subsubsection{Homogeneous space description}

 Let $G \neq A_\ell, C_\ell$ and $G^{\ad} \cong G/P_j$.  In the Dynkin diagram of $G$, let $N(j)$ denote the neighbouring nodes to the contact node $j$, and let $\sfZ_{N(j)} = \sum_{i \in N(j)} \sfZ_i$.  Refine the $\sfZ_j$-grading on $\fg$ with the $(\sfZ_{N(j)},\sfZ_j)$-bigrading.  From \eqref{E:f-decomp}--\eqref{E:V-decomp}, we have:
 \begin{align} \label{E:bigrading}
 \ff_{-1} = \fg_{-1,0}, \quad 
 \fg_{\geq 0,0} = \fz(\fg_0) \op \fq, \quad 
 \ff_1 = \fg_{1,0}, \quad 
 V_{-k} = \fg_{-k,-1} \quad (k=0,1,2,3).
 \end{align}
 The symplectic form on $V$ pairs $V_0$ with $V_{-3}$ and $V_{-1}$ with $V_{-2}$.  This yields a 1-dimensional subspace $\fg_{-3,-2}$ transverse to $V$.  (See Figure \ref{F:G2-rootdiag} for the $G_2$-case.) 
 
 \begin{figure}[h]
 \framebox{\includegraphics[width=15cm]{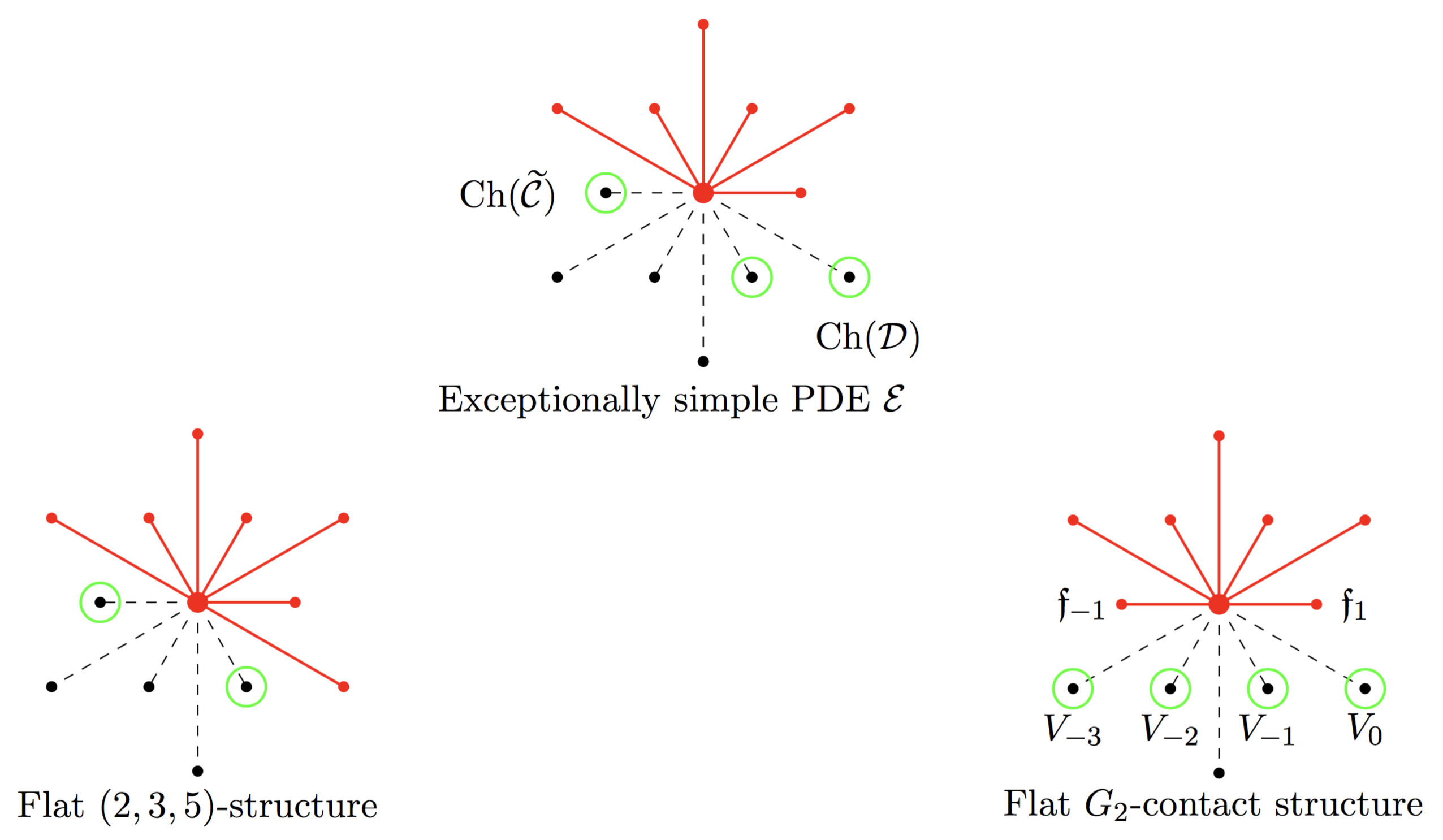}}
 \caption{Some geometric structures encoded on the $G_2$-root diagram}
 \label{F:G2-rootdiag}
 \end{figure}
 
 \begin{prop} \label{P:bigrading}
 Let $G \neq A_\ell, C_\ell$ and consider the PD-manifold $(\cE;\cD,\widetilde\cC)$ for the flat $G$-contact structure on $M = G^{\ad} \cong G/P_j$.  As $G$-homogeneous spaces, $\cE \cong G/P_{B(j)}$, where $B(j) = \{ j \} \cup N(j)$.  Fix $o \in G/P_j$, 
 $l \in \bbP(\cC_o)$ the highest weight line, and $p = \widehat{T}_l \cV_o \in \tLG(\cC_o)$.
 With respect to the $(\sfZ_{N(j)},\sfZ_j)$-bigrading,
 \begin{itemize}
 \item $\cD_p \cong \fg_{-1,0} \op \fg_{0,-1} \op \fg_{-1,-1} \cong W \op \fg_{0,-1} \op W$.  In particular, $\fg_{0,-1} \subset \Ch(\cD)_p$.
 \item $\widetilde\cC_p \cong \fg_{-1,0} \op \fg_{0,-1} \op \fg_{-1,-1} \op \fg_{-2,-1} \op \fg_{-3,-1} \cong W \op \bbC \op W \op W^* \op \bbC$.
 \end{itemize}
 With respect to the $\sfZ_{B(j)}$-grading, these are $\fg_{-1} \op \fg_{-2}$ and $\fg_{-1} \op ... \op \fg_{-4}$ respectively.
 \end{prop}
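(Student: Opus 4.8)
The plan is to identify $\cE$ with the homogeneous space $G/P_{B(j)}$ and then read off both distributions directly from the $(\sfZ_{N(j)},\sfZ_j)$-bigrading, exploiting that $\cD$ and $\widetilde\cC$ are $G$-invariant and hence determined by $P_{B(j)}$-submodules of $\fg/\fp_{B(j)}$. First I would observe that, via the embedding $l\mapsto\widehat{T}_l\cV$, the manifold $\cE=\widehat\cV$ is the total space of the bundle $\cV\to M$ whose fibre over $m$ is the sub-adjoint variety $\cV_m\cong F/Q$. Since $G$ acts transitively on $M=G/P_j$ and the isotropy $P_j\supset G_0$ already acts transitively on the fibre $\cV_o=G_0\cdot l_0$, the group $G$ acts transitively on $\cE$. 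It then remains to compute the stabilizer of the base point $p=\widehat{T}_{l_0}\cV_o$, which amounts to identifying the Lie algebra of the stabilizer of the pair $(o,l_0)$.

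For the stabilizer I would argue purely with the bigrading. Stabilizing $o$ forces membership in $\fp_j$, i.e.\ all bidegrees $\fg_{p,q}$ with $q\geq0$. The remaining condition is that the induced action on $\cC_o\cong\fg_{-1}=V$ fix the highest weight line $l_0=V_0=\fg_{0,-1}$ projectively. Here the $\sfZ_j$-positive contact slots $\fg_1^{\mathrm{ct}},\fg_2^{\mathrm{ct}}$ act trivially on the associated graded $\cC_o$ (they raise $\sfZ_j$-degree), so they always stabilize $l_0$; within $\fg_0=\fg_{-1,0}\op\fg_{0,0}\op\fg_{1,0}$ the slot $\fg_{-1,0}=\ff_{-1}$ maps $\fg_{0,-1}$ onto $[\ff_{-1},V_0]=\fg_{-1,-1}\neq0$ and is excluded, while $\fg_{\geq0,0}=\fz(\fg_0)\op\fq$ preserves $V_0$. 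Collecting the surviving slots and comparing bidegrees, the stabilizer Lie algebra is exactly $\bigoplus_{p+q\geq0}\fg_{p,q}=\fp_{B(j)}$, since $\sfZ_{B(j)}=\sfZ_j+\sfZ_{N(j)}$ assigns degree $p+q$ to $\fg_{p,q}$. This yields $\cE\cong G/P_{B(j)}$ together with the identification $T_p\cE\cong\fg/\fp_{B(j)}\cong\bigoplus_{p+q<0}\fg_{p,q}$.

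With this in hand, both distributions are preimages under the projection $\pi_*\colon T_p\cE\cong\fg/\fp_{B(j)}\to T_oM\cong\fg/\fp_j$, whose kernel is the vertical slot $\fg_{-1,0}$. Since $\sigma=du-u_idx^i$ is pulled back from $M$, the derived system $\widetilde\cC=\ker\sigma|_\cE$ equals $\pi^*\cC$, the preimage of $\cC\cong V$, so $\widetilde\cC_p=\fg_{-1,0}\op V=\fg_{-1,0}\op\fg_{0,-1}\op\fg_{-1,-1}\op\fg_{-2,-1}\op\fg_{-3,-1}$. Likewise $\cD=\cC^{(1)}|_\cE$ is fibrewise the preimage under $\pi_*$ of the Lagrangian $\widehat{T}_{l_0}\cV=V^{-1}$; by the osculating sequence of \S\ref{S:LM}, $V^{-1}\cong V_0\op V_{-1}=\fg_{0,-1}\op\fg_{-1,-1}$, whence $\cD_p=\fg_{-1,0}\op\fg_{0,-1}\op\fg_{-1,-1}$. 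The Cauchy characteristic claim $\fg_{0,-1}\subset\Ch(\cD)_p$ then follows from the bracket relations in $\fg$ (exact in the flat model): $[\fg_{0,-1},\fg_{-1,0}]\subset\fg_{-1,-1}\subset\cD_p$, while $[\fg_{0,-1},\fg_{0,-1}]\subset\fg_{0,-2}=0$ and $[\fg_{0,-1},\fg_{-1,-1}]\subset\fg_{-1,-2}=0$, so the line $\fg_{0,-1}=l_0$ brackets $\cD_p$ into itself. Finally, translating bidegrees into $\sfZ_{B(j)}$-degrees $p+q$ places $\fg_{-1,0},\fg_{0,-1}$ in degree $-1$, $\fg_{-1,-1}$ in $-2$, $\fg_{-2,-1}$ in $-3$, and $\fg_{-3,-1}$ in $-4$, giving $\cD_p=\fg_{-1}\op\fg_{-2}$ and $\widetilde\cC_p=\fg_{-1}\op\cdots\op\fg_{-4}$.

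The main obstacle I anticipate is the stabilizer computation of the second paragraph: pinning down the isotropy exactly rather than merely up to dimension requires correctly placing the contact slots $\fg_1^{\mathrm{ct}},\fg_2^{\mathrm{ct}}$ inside the bigrading (their bidegrees $(0,1),(1,1),(2,1),(3,1),(3,2)$ all satisfy $p+q\geq0$, so they lie in $\fp_{B(j)}$ despite acting trivially on $\cC_o$), and conflating the contact grading with the $\sfZ_{B(j)}$-grading here would produce the wrong isotropy. The only other delicate input is recognizing the horizontal lift of $\cC^{(1)}$ as the affine tangent space $\widehat{T}_{l_0}\cV$, i.e.\ the Legendrian/osculating identification $V^{-1}=V_0\op V_{-1}$; granting that, the remainder is bookkeeping with bidegrees.
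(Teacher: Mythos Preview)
Your proof is correct and follows essentially the same approach as the paper: identify $\cE\cong G/P_{B(j)}$ via a stabilizer computation, then read off $\cD_p$ and $\widetilde\cC_p$ as the $\pi_*$-preimages of $p=V^{-1}$ and $\cC_o=V$ respectively. The only stylistic difference is that the paper pins down the stabilizer more succinctly---it notes that since $\cV_o\hookrightarrow\tLG(\cC_o)$ is an embedding, the stabilizer of $p$ in $F=G_0^{\Ss}$ equals the stabilizer $Q$ of $l_0$, whose crossed nodes are already known to be $N(j)$---whereas you verify the same conclusion by a direct bigrading check; you also supply an explicit bracket verification of $\fg_{0,-1}\subset\Ch(\cD)_p$, which the paper leaves implicit in the statement.
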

 
 \begin{proof}
 The parabolic $Q \subset F = G_0^{\Ss}$ is the stabilizer of $l$.  Since the map $\cV_o \to \tLG(\cC_o)$ is an embedding, then $Q \subset F$ is also the stabilizer of $p = \widehat{T}_l \cV_o$.  As remarked in \S \ref{S:SA-var}, the crossed nodes for $Q$ are $N(j)$.  Hence, the stabilizer in $G$ of $p$ is $P_{B(j)}$, so $\cE \cong G/P_{B(j)}$.
 
 In the Lagrange--Grassmann bundle $M^{(1)}$ over $(M,\cC)$, we restrict $\cC^{(1)}$ and $(\cC^{(1)})^{-2}$ to $\cE$ to obtain $\cD$ and $\widetilde\cC$ respectively.  Referring to \eqref{E:bigrading}, we have $p = V_0 \op V_{-1}$ and $\cC_o$ is identified with $V = V_0 \op V_{-1} \op V_{-2} \op V_{-3}$.  The vertical subspace at $p \in M^{(1)}$ is isomorphic to $\ff_{-1} = \fg_{-1,0}$.  By definition, the pullback under the projection $\cE \to M$ of: (i) $\cC_o$ yields $\widetilde\cC_p$, and (ii) $p \subset \cC_o$ yields $\cD_p$.  This yields the stated decompositions.
 \end{proof}

 We emphasize that the subordinate structure $(\cE,\cD)$:
 \begin{itemize}
 \item is {\em not} the underlying structure for a $(G,P_{B(j)})$-geometry.  (It is not finite-type since $\Ch(\cD) \neq 0$.)
 \item has no distinguished vertical subspace (corresponding to $\ff_{-1} = \fg_{-1,0}$).  This is determined from the additional data of $\widetilde\cC$.  Namely, this vertical subspace is $\Ch(\widetilde\cC) = \Ch(\cC^{(1)}) \cap T\cE$.
 \end{itemize}
 
 \newpage
 \subsection{Cauchy characteristics and second-order Monge geometries}
 \label{S:Cauchy}

 \subsubsection{Cauchy characteristic reduction}
 
 We know that $(\cE,\cD)$ admit Cauchy characteristics.  More precisely:
 \begin{prop}
 $\rnk(\Ch(\cD)) = 1$ (so $\Ch(\cD)_p = \fg_{0,-1}$) and $\Ch(\cD) \not\subset \Ch(\widetilde\cC)$ is spanned by
 \begin{align} \label{E:Cauchy-char}
  \bZ = \partial_{x^0} + u_0 \partial_u - t^a \left( \partial_{x^a} + u_a \partial_u\right) - \frac{\fC(t^3)}{2} \partial_{u_0} - \frac{3\fC_a(t^2)}{2} \partial_{u_a}.
 \end{align}
 \end{prop}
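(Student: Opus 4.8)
The plan is to compute $\Ch(\cD)$ dually, via the Pfaffian system $\cI = \tspan\{\sigma,\theta_i\}$ annihilating $\cD$ together with the structure equations \eqref{E:EV-streq}, and to trace the whole computation back to the injectivity of $\fC$ from Lemma \ref{L:C-inj}. Recall that $\bX \in \Gamma(\cD)$ is a Cauchy characteristic precisely when $\cL_\bX\cI \subseteq \cI$; since $\alpha(\bX) = 0$ for every $\alpha \in \cI$, Cartan's formula reduces this to $\bX \intprod d\alpha \equiv 0 \pmod{\cI}$ for each generator $\alpha$. By \eqref{E:EV-streq} we have $d\sigma \equiv d\widetilde\theta_0 \equiv 0 \pmod{\cI}$, so the conditions coming from $\sigma$ and $\widetilde\theta_0 = \theta_0 - t^a\theta_a$ hold automatically for any $\bX \in \cD$. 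Hence only $d\theta_a \equiv \fC_{abc}\,\omega^b \wedge \pi^c \pmod{\cI}$ imposes genuine constraints. Note that $\{\sigma,\widetilde\theta_0,\theta_a,\omega^0,\omega^b,\pi^c\}$ is a coframe on the $3n$-manifold $\cE$, with $\cD$ the annihilator of $\cI$.

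Next I would write a general $\bX \in \cD$ in the frame \eqref{E:E-D-framing} as $\bX = p\,\bX_0 + q^a\bX_a + r^c\bT_c$; reading the complementary coframe against this frame gives $\omega^0(\bX) = p$, $\omega^b(\bX) = 3(t^b p + q^b)$, and $\pi^c(\bX) = r^c$. Contracting the surviving structure equation yields
\begin{align*}
\bX \intprod d\theta_a \equiv \fC_{abc}\bigl(\omega^b(\bX)\,\pi^c - \pi^c(\bX)\,\omega^b\bigr) \pmod{\cI}.
\end{align*}
Because $\{\omega^b\}$ and $\{\pi^c\}$ are independent modulo $\cI$, the vanishing of this expression splits into $\fC_{abc}\,\omega^b(\bX) = 0$ and $\fC_{abc}\,r^c = 0$ for all admissible indices. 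Each of these asserts that an element of $W$ lies in the kernel of the contraction $\fC : W \to S^2W^*$, which is \emph{injective} by Lemma \ref{L:C-inj}; hence $\omega^b(\bX) = 0$ and $r^c = 0$. This forces $q^b = -t^b p$ and $r^c = 0$, so $\bX = p\,(\bX_0 - t^a\bX_a)$, giving $\rnk(\Ch(\cD)) \le 1$. Conversely $\bZ := \bX_0 - t^a\bX_a$ manifestly lies in $\cD$ and satisfies $\omega^b(\bZ) = \pi^c(\bZ) = 0$, so $\bZ \in \Ch(\cD)$ and the rank is exactly one. A short computation using the Euler relations $t^a\fC_a(t^2) = \fC(t^3)$ and $t^a\fC_{ab}(t) = \fC_b(t^2)$ then checks that $\bX_0 - t^a\bX_a$ equals the explicit expression \eqref{E:Cauchy-char}.

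To pin down $\Ch(\cD)_p$, I invoke Proposition \ref{P:bigrading}, which already gives $\fg_{0,-1} \subseteq \Ch(\cD)_p$; since $\fg_{0,-1} \cong V_0 \cong \bbC$ is one-dimensional by \eqref{E:V-decomp}, the rank-one count forces equality $\Ch(\cD)_p = \fg_{0,-1}$. Finally, for $\Ch(\cD) \not\subset \Ch(\widetilde\cC)$ it suffices to show $\bZ \notin \Ch(\widetilde\cC)$, i.e.\ $\cL_\bZ\sigma \not\equiv 0 \pmod{\sigma}$. Computing $\cL_\bZ\sigma = \bZ \intprod d\sigma$ with $d\sigma = dx^i \wedge du_i$ produces a nonzero $1$-form containing $du_0$ but \emph{no} $du$ term, whereas $\sigma$ contains $du$; so it cannot be a multiple of $\sigma$, and $\bZ \notin \Ch(\widetilde\cC)$.

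The main obstacle — indeed essentially the only nontrivial input — is the injectivity of $\fC$: it is precisely what rules out spurious Cauchy characteristics in the fibre ($\pi^c$) and $\omega^b$ directions and collapses the rank to one, mirroring the role that the same injectivity (through Proposition \ref{P:skew-cubic}) plays in the indeterminacy count of Theorem \ref{T:involutive}. Everything else is bookkeeping with the coframe $\{\sigma,\widetilde\theta_0,\theta_a,\omega^0,\omega^b,\pi^c\}$ and routine contraction identities for $\fC$.
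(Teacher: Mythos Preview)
Your argument is correct, but it proceeds differently from the paper's. The paper works on the vector-field side: it computes the nontrivial brackets $[\bT_a,\bX_b]$ and $[\bT_a,\bX_0] = t^b[\bT_a,\bX_b]$ among the framing \eqref{E:E-D-framing}, uses injectivity of $\fC$ to read off $\rnk(\cD^{-2}) = 3n-2$, and then invokes the PD-manifold identity \eqref{E:PD-Cauchy} (after noting $\cE^{(1)}\to\cE$ is onto) to conclude $\rnk(\Ch(\cD)) = 1$; finally it verifies $\bZ\in\Ch(\cD)$ by checking $[\bZ,\bT_a]=\bX_a$ and $[\bZ,\bX_i]=0$ directly. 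You instead stay on the coframe side and solve the Cauchy-characteristic condition $\bX\intprod d\theta_a\equiv 0\pmod\cI$ head-on, which simultaneously produces the rank bound and the explicit generator without appealing to \eqref{E:PD-Cauchy}. Both routes hinge on the same injectivity of $\fC:W\to S^2W^*$ (Lemma \ref{L:C-inj}); yours is more self-contained, while the paper's makes visible the link to Yamaguchi's general PD-manifold machinery.
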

 
 \begin{proof}  From \eqref{E:E-D-framing}, the non-trivial commutator relations are
 \begin{align*}
 [\bT_a,\bX_b] = 3\fC_{ab}(t) \partial_{u_0} + 3\fC_{abc} \partial_{u_c}, \quad
 [\bT_a,\bX_0] = t^b [\bT_a,\bX_b].
 \end{align*}
 By injectivity of $\fC : W \to S^2 W^*$, $\rnk(\cD^{-2}) = 3n-2$.  
 By \eqref{E:EV-streq}, $\cE^{(1)} \to \cE$ is onto, so by \eqref{E:PD-Cauchy}, $\rnk(\Ch(\cD)) = 1$.  Since $[\bZ, \bT_a] = \bX_a$ and $[\bZ,\bX_0] = [\bZ, \bX_a] = 0$, then $\bZ \in \Ch(\cD)$.  Also, $\bZ \not\in \Ch(\widetilde\cC)$.
 \end{proof}

 Tautologically, $\bbP(\Ch(\cD))$ {\em is} $\cV \subset \bbP(\cC)$ on $M$. The vector field $\bZ$ has the $3n-1$ invariants
  \begin{align*}
 X^a &= x^a + t^a x^0, \quad 
 U = u - (u_0 - t^a u_a) x^0 + \frac{\fC(t^3)}{2} (x^0)^2, \quad 
 U_a = u_a + \frac{3\fC_a(t^2)}{2} x^0,\\
 T^a &= t^a, \quad  Z = u_0 + \frac{\fC(t^3)}{2} x^0,
 \end{align*}
 which yields local coordinates $(X^a, U, U_a, T^a, Z)$ on the leaf space $\overline\cE$.  Pulling back $\cI$ by the section $\sigma : \overline\cE \to \cE$ determined by $x^0 = 0$ yields the differential system 
 \begin{align} \label{E:CCred}
 \omega = dZ - \frac{3}{2} \fC_a(T^2) dX^a, \quad \theta = dU - U_a dX^a, \quad \theta_a = dU_a - 3\fC_{ab}(T) dX^b, \quad 1 \leq a,b \leq n-1.
 \end{align}
 Its dual vector distribution $\overline\cD$ is spanned by
 \begin{align} \label{E:D-red}
  \partial_{T^a}, \quad \bY_a = \partial_{X^a} + U_a \partial_U + 3\fC_{ab}(T) \partial_{U_b} + \frac{3}{2} \fC_a(T^2) \partial_Z.
 \end{align}
 Note $[\partial_{T^a}, \bY_b] = 3\fC_{abc} \partial_{U_c} + 3\fC_{ab}(T) \partial_Z$.  Since $\fC : W \to S^2 W^*$ is injective, we may take $\fm_{-2} \cong \cD^{-2} / \cD$ to be spanned by $\partial_{U_c} + T^c \partial_Z \,\,(\!\mod \cD)$. We obtain the symbol algebra $\fg_- = \fg_{-1} \op \fg_{-2} \op \fg_{-3}$, where
 \begin{align} \label{E:PNj-symbol}
 \fg_{-1} \cong W \op W \cong W \otimes \bbC^2, \qquad \fg_{-2} \cong W^*, \qquad \fg_{-3} \cong \bbC^2.
 \end{align}
 With respect to the bracket $\bigwedge^2 \fg_{-1} \to \fg_{-2}$, each copy of $W$ in $\fg_{-1} \cong W \op W$ is isotropic, while cross terms yield a map $W \otimes W \to W^*$ expressed via $\fC$.  The bracket $\fg_{-1} \times \fg_{-2} \to \fg_{-3}$ is the natural contraction.

 More abstractly, referring to the $(\sfZ_{N(j)},\sfZ_j)$-bigrading used in Proposition \ref{P:bigrading}, we have $\Ch(\cD)_p = \fg_{0,-1}$, so quotienting by this only retains the $\sfZ_{N(j)}$-grading and we may re-express \eqref{E:PNj-symbol} as
 \begin{align}
 \fg_{-1} = \fg_{-1,0} \op \fg_{-1,-1}, \qquad \fg_{-2} = \fg_{-2,-1}, \qquad \fg_{-3} = \fg_{-3,-1} \op \fg_{-3,-2}.
 \end{align}
 Thus, the symbol algebra for the distribution $\overline\cD$ on $\overline\cE$  corresponding to $\fg_{-1}$ matches that for the underlying structure for (regular, normal) $(G,P_{N(j)})$-geometries.  (From \cite{Yam1993}, this structure comes from the filtration; no additional structure group reduction is required.)  We refer to these as {\em second-order Monge geometries}.

 \begin{prop} \label{E:CC-red}
 Let $G \neq A_\ell, C_\ell$.  The structure $(\overline\cE,\overline\cD)$ in \eqref{E:CCred} is the flat model for $(G,P_{N(j)})$-geometries.   If moreover $G \neq G_2, B_3$, then any (regular, normal) $(G,P_{N_{(j)}})$-geometry is (locally) flat.
 \end{prop}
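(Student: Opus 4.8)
The plan is to treat the two assertions separately: first identify $(\overline\cE,\overline\cD)$ with the homogeneous flat model, and then establish rigidity for $G\neq G_2,B_3$ through a cohomological vanishing. The first part is essentially bookkeeping with the homogeneous picture already in hand; the second is where the real work lies.

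For the first claim, I would build directly on Proposition \ref{P:bigrading}, where $\cE \cong G/P_{B(j)}$ with $B(j) = \{j\}\cup N(j)$, together with the preceding computation that $\Ch(\cD)$ has rank one with $\Ch(\cD)_p = \fg_{0,-1}$ in the $(\sfZ_{N(j)},\sfZ_j)$-bigrading. Deleting the contact node $j$ from the marking gives $\fp_{B(j)} \subset \fp_{N(j)}$, and a short check with the bigrading shows that the only root spaces in $\fp_{N(j)}/\fp_{B(j)}$ are those with $\sfZ_{N(j)}=0$ and $\sfZ_j<0$; since the contact grading forces $\fg_{0,-2}=0$, this quotient equals $\fg_{0,-1}=V_0$, one-dimensional, and coincides with $\Ch(\cD)_p$. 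Hence the Cauchy-characteristic foliation integrates to the fibres of the $G$-equivariant fibration $G/P_{B(j)} \to G/P_{N(j)}$, so the local leaf space is $\overline\cE \cong G/P_{N(j)}$ with $\overline\cD$ the induced $G$-invariant distribution corresponding to $\fg_{-1}$ in the $\sfZ_{N(j)}$-grading. As recorded just before the statement (citing \cite{Yam1993}), this symbol requires no further structure-group reduction, so $(\overline\cE,\overline\cD)$ is a $(G,P_{N(j)})$-geometry; being the full homogeneous space $G/P_{N(j)}$ with its canonical $G$-invariant (Maurer--Cartan) Cartan connection, its curvature vanishes and it is the flat model.

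For the rigidity claim, the strategy is to show the harmonic curvature has no room to be nonzero. For a regular, normal parabolic geometry of type $(G,P_{N(j)})$, the harmonic curvature $\kappa_H$ is a complete obstruction to flatness and is a section of the bundle associated to $H^2_+(\fg_-,\fg)$, with $\fg_-$ as in \eqref{E:PNj-symbol} \cite{CS2009}. Thus it suffices to prove that this $G_0$-module vanishes when $G\neq G_2,B_3$. I would compute $H^2(\fg_-,\fg)$ via Kostant's theorem \cite{Kos1961}: it is the sum of irreducibles indexed by the length-two elements of the Hasse diagram $W^{\fp_{N(j)}}$ (equivalently, the admissible pairs of crossed nodes that may be lowered), and the $\sfZ_{N(j)}$-homogeneity of each summand is read off from the corresponding Weyl-group word. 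The argument then runs through the Dynkin diagrams of Table \ref{F:subadjoint}, enumerates the length-two components, and checks that every one has \emph{non-positive} homogeneity---so that $H^2_+$ is empty---exactly when $G\neq G_2,B_3$; in the two excluded cases a single positive-degree component survives (for $G_2/P_1$ this is the classical Cartan quartic of a $(2,3,5)$-distribution). Once $H^2_+=0$ is in hand, every regular, normal $(G,P_{N(j)})$-geometry satisfies $\kappa_H \equiv 0$ and is therefore locally flat.

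The main obstacle is the uniform execution of this Kostant computation across the classical ($B_\ell,D_\ell$) and exceptional ($D_4,F_4,E_6,E_7,E_8$) series, in particular tracking the $\sfZ_{N(j)}$-homogeneities carefully enough to be certain no positive-degree summand is missed. I expect the resulting dichotomy to mirror Theorem \ref{T:involutive} precisely: the cases retaining a positive-degree harmonic-curvature module should be exactly $G_2$ and $B_3$, matching the two cases whose tableau $(\cI,\cJ)$ is involutive and whose integral manifolds depend on functions rather than constants. If one wishes to bypass the case-by-case cohomology, an alternative is to read rigidity off the exterior-differential-systems analysis of \S\ref{S:involutive}---via Proposition \ref{P:skew-cubic} and the prolongation yielding a Frobenius system for $G\neq G_2,B_3$---but converting that finiteness of solutions into flatness of the abstract geometry is less direct, so I would present the cohomological route as primary and use the EDS picture only as a consistency check.
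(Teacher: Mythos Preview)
Your proposal is correct, and for the first assertion it takes a genuinely different route from the paper. The paper argues by symmetry counting: symmetries of $(\cE;\cD,\widetilde\cC)$ preserve $\Ch(\cD)$ and hence project to $\overline\cE$; since $\Ch(\cD)\cap\Ch(\widetilde\cC)=0$, the full $\fg$ injects into the symmetry algebra of $(\overline\cE,\overline\cD)$; but a $(G,P_{N(j)})$-geometry admits at most $\dim(\fg)$ symmetries, with equality characterising the flat model, whence the conclusion. Your argument is instead a direct homogeneous-space identification: you verify via the bigrading that $\fp_{N(j)}/\fp_{B(j)}=\fg_{0,-1}=\Ch(\cD)_p$, so the Cauchy-characteristic foliation is exactly the fibre of $G/P_{B(j)}\to G/P_{N(j)}$ and the leaf space is manifestly the homogeneous model. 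The paper's route is slicker (two lines, and it bypasses any need to track the fibration explicitly), while yours is more constructive and makes the geometry of the reduction transparent; both are perfectly valid.

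For the second assertion your approach coincides with the paper's: both invoke the vanishing $H^2_+(\fg_-,\fg)=0$ for $(G,P_{N(j)})$ when $G\neq G_2,B_3$. The paper simply cites Yamaguchi \cite[pg.313]{Yam1999} for this computation, whereas you propose to redo the Kostant enumeration yourself. That is fine, but be aware that carrying it out uniformly across all cases is genuine bookkeeping; citing Yamaguchi is the economical choice unless you want the paper to be self-contained on this point. Your suggested EDS cross-check via Theorem~\ref{T:involutive} is a nice heuristic but, as you note, does not by itself establish flatness of the abstract geometry.
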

 
 \begin{proof} All symmetries of $(\cE;\cD, \widetilde\cC)$ preserve $\Ch(\cD)$, so are projectable over $\overline\cE$.  Since $\Ch(\cD) \cap \Ch(\widetilde\cC) = 0$, then $\fg$ injects into the symmetry algebra of $(\overline\cE,\overline\cD)$.  But all $(G,P_{N(j)})$-geometries have symmetry dimension at most $\dim(\fg)$.  The flat model is uniquely maximally symmetric, which implies the first claim.
 
 The second claim is due to Yamaguchi \cite[pg.313]{Yam1999}.  Namely, almost all $(G,P_{N(j)})$ satisfy $H^2_+(\fg_-,\fg) = 0$, hence $\kappa_H = 0$ for regular, normal geometries.  The only exceptions are $(G_2,P_1)$ and $(B_3,P_{1,3})$.
 \end{proof}
 
 \subsubsection{Second order Monge equations and their solutions}
 \label{S:Monge}
 
 Integral manifolds of \eqref{E:CCred}, i.e.\ submanifolds upon which $\omega,\theta,\theta_a$ vanish, have maximal dimension $\dim(W)$ and we may take these to be parametrized by $X^a$.  These are the solutions to the {\em second order Monge equations} \eqref{E:Monge}, where $T^c = T^c(X^e)$.  On integral manifolds, $d\omega, d\theta, d\theta_a$ also vanish.  Since $d\omega = T^a d\theta_a$, the compatibility condition is
 \begin{align} \label{E:Monge-comp}
 \fC_{ac[b} T^c{}_{,e]} = 0, \qbox{where} T^c{}_{,e} = \frac{\partial T^c}{\partial X^e}.
 \end{align}

 \begin{prop}  If $W \neq \GJ$ and $W \neq \JS_1$, the only solutions to \eqref{E:Monge-comp} are $T^c = \lambda X^c + \mu^c$, where $\lambda$ and $\mu^c$ are constants.  In these cases, the solution of \eqref{E:Monge} is
 \begin{align}
 Z &= \frac{\lambda^2}{2} \fC(X^3) + \frac{\lambda}{2} \fC_a(X^2) \mu^a + \fC_{ab}(X) \mu^a \mu^b + const\\
 U &= \frac{\lambda}{2} \fC(X^3) + \frac{1}{2} \fC_a(X^2) \mu^a + \nu_a X^a + const,
 \end{align}
 depending on $2n+1$ arbitrary constants.
 \end{prop}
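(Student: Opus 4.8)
The plan is to read \eqref{E:Monge-comp} as a pointwise algebraic constraint on the Jacobian of $T$ and then feed it directly into Proposition~\ref{P:skew-cubic}. Fix a point $X$ and let $A \in \tEnd(W)$ be the endomorphism with components $A^c{}_e := T^c{}_{,e}(X) = \frac{\partial T^c}{\partial X^e}(X)$. The condition $\fC_{ac[b} T^c{}_{,e]} = 0$ is exactly the statement $\Psi(A) = 0$, i.e.\ $A \in \ker(\Psi)$. Since we have excluded $W = \JS_1$, Proposition~\ref{P:skew-cubic} gives $\ker(\Psi) = \tspan\{\id_W\}$, so at every $X$ there is a scalar $\lambda = \lambda(X)$ with
\[
T^c{}_{,e} = \lambda\,\delta^c{}_e \qquad (1 \le e \le n-1).
\]

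First I would upgrade this to $\lambda$ being constant. Differentiating and using symmetry of second partials, $T^c{}_{,ef} = T^c{}_{,fe}$ forces $\lambda_{,f}\,\delta^c{}_e = \lambda_{,e}\,\delta^c{}_f$. Because $W \neq \GJ$ we have $\dim(W) \geq 2$, so we may select indices with $c = e \neq f$, giving $\lambda_{,f} = 0$ for every $f$; hence $\lambda$ is constant. Integrating $T^c{}_{,e} = \lambda\,\delta^c{}_e$ then yields $T^c = \lambda X^c + \mu^c$ with constants $\mu^c$, which is the first assertion. I would remark that the two excluded cases are precisely those in which this argument fails and extra freedom appears (consistent with Theorem~\ref{T:involutive}): for $W = \GJ$ the antisymmetrization in \eqref{E:Monge-comp} is vacuous, while for $W = \JS_1$ the kernel $\ker(\Psi)$ is larger, so $T$ need not be affine.

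For the second assertion I would substitute the affine $T$ into the Pfaffian system \eqref{E:CCred} (equivalently, into \eqref{E:Monge}), whereupon every right-hand side becomes an explicit polynomial in $X$, and integrate. The equation $U_{ab} = 3\fC_{ab}(T) = 3\lambda\,\fC_{ab}(X) + 3\fC_{ab}(\mu)$ integrates (using $\partial_a\partial_b\fC(X^3) = 6\fC_{ab}(X)$) to a cubic-plus-quadratic expression in $X$ of the displayed form for $U$, determined up to an affine term $\nu_a X^a + \mathrm{const}$; likewise $Z_a = \frac{3}{2}\fC_a(T^2)$ integrates to the displayed $Z$ up to an additive constant. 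No obstruction arises because the identity $d\omega = T^a\, d\theta_a$ from \eqref{E:CCred} shows that once \eqref{E:Monge-comp} holds the form $\omega$ is closed on integral manifolds, so $Z$ exists. Counting parameters: $\lambda$ (one), the $\mu^a$ and the $\nu_a$ (each $n-1$ in number), plus the two integration constants for $Z$ and $U$, gives $1 + 2(n-1) + 2 = 2n+1$ arbitrary constants, as claimed.

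The only genuinely substantive point is the reduction of \eqref{E:Monge-comp} to membership in $\ker(\Psi)$ together with the integrability argument forcing $\lambda$ to be constant; everything afterward is routine polynomial integration. The main obstacle has therefore already been dispatched, namely the uniform identification $\ker(\Psi) = \tspan\{\id_W\}$ for $W \neq \JS_1$ in Proposition~\ref{P:skew-cubic}, where the Jordan-algebraic input (injectivity of $\fC : W \to S^2 W^*$ and the $\ff_0^{\Ss}$-irreducible decomposition of $\tEnd(W)$) is essential.
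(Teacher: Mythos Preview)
Your argument is correct and follows exactly the paper's approach: invoke Proposition~\ref{P:skew-cubic} pointwise to force $T^c{}_{,e}=\lambda\,\delta^c{}_e$, then integrate. In fact your write-up is more complete than the paper's one-line proof, since you supply the symmetry-of-second-partials step (requiring $\dim W\geq 2$, i.e.\ $W\neq\GJ$) that pins $\lambda$ down as a genuine constant rather than a function of $X$.
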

 
 \begin{proof}
 By Proposition \ref{P:skew-cubic}, $T^c{}_{,e} = \lambda \delta^c{}_e$, so $T^c = \lambda X^c + \mu^c$ as claimed, and the rest easily follows.
 \end{proof}
 
  For $G = G_2$, i.e.\ $W = \GJ$, \eqref{E:Monge-comp} is trivial.  Eliminating $T$, \eqref{E:Monge} becomes $Z' = \frac{1}{2} (U'')^2$ (equivalent to Hilbert--Cartan).  When $G = B_3$, i.e.\ $W = \JS_1$, $\fC(t^3) = v^2\lambda$ for $t = (v,\lambda)$, and \eqref{E:Monge} becomes 
 \begin{align} \label{E:B3-Monge-flat}
  Z_x = U_{xx} U_{xy}, \quad Z_y = \frac{1}{2} (U_{xy})^2, \quad U_{yy} = 0.
  \end{align}
 This has solution $Z = \frac{1}{2} f'(x)^2 y + \int f'(x) g''(x) dx$, $U = f(x) y + g(x)$.

 \subsection{The parabolic Goursat PDE $\cF$}
 \label{S:Goursat}

 Let $G \neq A_\ell, C_\ell$.  For the flat $G$-contact structure, consider $\cF \subset M^{(1)}$ and its associated PD-manifold $(\cF;\cD,\widetilde\cC)$.  We now verify that $\cF$ has parabolic Goursat type and show how the first-order covariant system $\cN$ \cite{Yam1999} leads to the sub-adjoint variety field $\cV$.
 
 The hypersurface $\cF \subset M^{(1)}$ has local coordinates $(x^i,u,u_i,u_{ab},t^a)$ adapted to $\cF \to M^{2n+1}$.  While $\widetilde\cC = \fann\{ du - u_i dx^i \} = \tspan\{ \partial_{x^i} + u_i \partial_u, \partial_{u_i}, \partial_{u_{ab}}, \partial_{t^a} \}$, $\cD$ has rank $2n-1$ and is spanned by
 \begin{align} \label{E:F-D-framing}
 \partial_{t^a}, \quad \partial_{u_{ab}}, \quad
 \partial_{x^i} + u_i \partial_u + u_{ij} \partial_{u_j}, \qbox{where} 
 \left\{ \begin{array}{r@{\,}l} 
 u_{00} &= t^a t^b u_{ab} - 2\fC(t^3)\\ 
 u_{a0} &= t^b u_{ab} - \frac{3}{2} \fC_a(t^2)
 \end{array}. \right.
 \end{align}
 Off of $u_{ab} = 3\fC_{ab}(t)$, i.e.\ $\cE \subset \cF$, we have $\cD^{-2} = \widetilde\cC$, so $\Ch(\cD) = 0$ by \eqref{E:PD-Cauchy}.  (We verify that $\cF^{(1)} \to \cF$ is onto.)  Hence, $(\cF,\cD)$ and $(\cF;\cD,\widetilde\cC)$ share the same symmetries (which are the external symmetries of $\cF \subset M^{(1)}$; see \S \ref{S:contact}).  The {\em symbol algebra}\footnote{The symbol algebra of a PD-manifold should not be confused with the symbol algebra of a distribution.} of $(\cF;\cD,\widetilde\cC)$ at $v \in \cF$ is the subalgebra of $\fg_-$ in \eqref{E:J2-symbol} given by 
 \begin{align} \label{E:F-symbol}
 \fs(v) = \fs_{-1}(v) \op \fs_{-2}(v) \op \fs_{-3}(v) = (L \op \fr(v)) \op L^* \op \bbC,
 \end{align}
 where $\fr(v) \subset S^2 L^*$ has codimension one.  This corresponds to the vertical subspace spanned by differentiating the parametric equations for $\cF$ by $\partial_{t^c}$ and $\partial_{u_{cd}}$.  
 If $\{ \chi_i \}$ is a basis of $L$ and $\{ \chi^i \}$ its dual basis, then $S^2 L^*$ is spanned by $\chi^i\chi^j$ (corresponding to $\partial_{u_{ij}}$).  Let $l(v) = \chi_0 - t^a \chi_a \in L$.  Then $\fr^\perp(v) = \fann(\fr(v)) = \tspan\{ l^2 \} \subset S^2 L$, i.e.\ $\cF$ is of parabolic type.  (If $\cF = \{ \sfF(x^i,u,u_i,u_{ij}) = 0 \}$, then $\rnk(\frac{\partial \sfF}{\partial u_{ij}})=1$ everywhere.)
 
 We have a distinguished $l^\perp \subset L^*$ and a corresponding {\em first-order covariant system} $\cN \subset \cD^{-2}$ spanned by $\cD$ together with $t^a \partial_{u_0} + \partial_{u_a}$.  The (graded Lie algebra) automorphism group $A(\fs)$ of $\fs$ distinguishes a subspace $\tspan\{ \phi(l) : \phi \in A(\fs) \} \subset \fs_{-1}$ and we let $\cM \subset \cD$ be the corresponding distribution, called the {\em Monge characteristic system}.  (See \cite[\S 7.3]{Yam1982} for more details.)  Explicitly, $\cN$ and $\cM$ are spanned by
 \begin{align*}
 \cN: &\qquad \partial_{x^0} + u_0 \partial_u - \frac{1}{2} \fC(t^3) \partial_{u_0}, \quad
 \partial_{x^a} + u_a \partial_u - \frac{3}{2} \fC_a(t^2) \partial_{u_0}, \quad
\partial_{t^a}, \quad \partial_{u_{ab}}, \quad t^a \partial_{u_0} + \partial_{u_a};\\
 \cM: &\qquad \partial_{x^0} + u_0 \partial_u - t^a ( \partial_{x^a} + u_a \partial_u) - \frac{\fC(t^3)}{2} \partial_{u_0} - \frac{3\fC_a(t^2)}{2}  \partial_{u_a}, \quad \partial_{u_{ab}}.
 \end{align*}
 Since $\cM = \Ch(\cN)$, then $\cM$ is completely integrable, i.e.\ $\cF$ is of {\em Goursat} type.
 
 The vertical bundle for $\cF \to M$ is $\Ch(\widetilde\cC) = \tspan\{ \partial_{t^a}, \partial_{u_{ab}} \}$.  We have $\Ch(\cN) \cap \Ch(\widetilde\cC) = \tspan\{ \partial_{u_{ab}} \}$, and the projection of $\Ch(\cN)$ to $M$ recovers the sub-adjoint variety field $\cV \subset \bbP(\cC)$.  Since $\cN$ and $\Ch(\cN)$ are covariant for $\cD$, this confirms that all symmetries of $(\cF,\cD)$ are inherited by $(M,\cC,\cV)$. 
  
 \subsection{Degenerate cases}
 \label{S:degenerate}
 
 In this section, we treat the exceptional type $A$ and $C$ cases.
 
 \subsubsection{Type A}
 \label{S:typeA}
 
 For $G = A_{n+1}$, $A_{n+1} / P_{1,n+1} \cong G^{\ad} \inj \bbP(\fg)$ and $\fg_{-1}$ is reducible for $\fg_0 \cong \bbC^2 \times \fsl_n$.  Here, the corresponding geometric structure is a {\em Legendrian contact structure}, i.e.\ $(M^{2n+1},\cC)$ with a decomposition $\cC = E \op F$ into complementary Legendrian subspaces. When $F$ is integrable, we can introduce coordinates $(x^i,u,u_i)$, $1 \leq i \leq n$, so that $\cC = \fann(du - u_i dx^i)$ and $F = \tspan\{ \partial_{u_k} \}$ (see \cite{DMT2016}).  Now $E$ specifies a section of $M^{(1)} \to M$, so such structures can be regarded as complete systems of 2nd order PDE $u_{ij} = f_{ij}(x^k,u,u_k)$ up to {\em point} transformations, i.e.\ contact transformations preserving $\tspan\{ \partial_{u_k} \}$.

 \begin{prop} The PDE system $u_{ij} = 0$, $1 \leq i,j \leq n$ has point symmetry algebra $\fg = A_{n+1}$.  Its generating functions are:
 \begin{align*}
 \begin{array}{|c|c|c|c|c|c|c|} \hline
 \fg_{-2} & \multicolumn{2}{|c|}{\fg_{-1}} & \fg_0 & \multicolumn{2}{|c|}{\fg_1} & \fg_2 \\ \hline
 \fg_{-1,-1} & \fg_{-1,0} & \fg_{0,-1} & \fg_{0,0} & \fg_{1,0} & \fg_{0,1} & \fg_{1,1} \\ \hline
 1 & x^i & u_i & \begin{array}{c} \sfZ = 2u - x^i u_i \\ x^i u_j \end{array} & 
 u u_i & x^i (u - x^j u_j) & u (u - x^j u_j) \\ \hline
 \end{array}
 \end{align*}
 Here, $\sfZ = \sfZ_1 + \sfZ_{n+1}$, where the bi-grading element $(\sfZ_1,\sfZ_{n+1}) = ( u, u - x^i u_i)$ acts as indicated above.
 \end{prop}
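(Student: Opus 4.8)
The plan is to recognize that $u_{ij}=0$, together with the integrable Legendrian bundle $F=\tspan\{\partial_{u_k}\}$, is the flat model for $(A_{n+1},P_{1,n+1})$-parabolic geometries, i.e.\ the flat Legendrian contact structure, and then to produce its symmetry algebra by hand. First I would reformulate point symmetries geometrically: an infinitesimal point transformation is exactly a contact vector field preserving $F$, and $\bS_f$ preserves $F$ iff its generating function $f$ is affine in the $u_i$; on the other hand the PDE $u_{ij}=0$ is the section $E=\tspan\{\bX_i=\partial_{x^i}+u_i\partial_u\}$ of $M^{(1)}\to M$, so (just as in Proposition~\ref{P:same-sym}) the prolongation of $\bS_f$ preserves $\{u_{ij}=0\}$ iff $\cL_{\bS_f}E\subset E$. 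Hence a point symmetry of $u_{ij}=0$ is precisely a contact vector field preserving both $E$ and $F$, that is an automorphism of the Legendrian contact structure $\cC=E\op F$. The essential subtlety, absent in the cases $G\neq A_\ell,C_\ell$, is that here $V=\fg_{-1}$ is reducible, so neither $\cV$ nor $E$ nor $F$ alone reduces the structure algebra: preserving $E$ alone (or $F$ alone) is infinite-dimensional, being the lift of an arbitrary diffeomorphism of an $(n+1)$-dimensional leaf space, and finite-dimensionality appears only once both are imposed.

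I would then establish flatness exactly as in the proof of Theorem~\ref{T:flat}: the only nonzero brackets among the standard framing are $[\bX_i,\bU^j]=\delta_i{}^j\partial_u$, so choosing the partial connection for which the framing is parallel annihilates the torsion on $\bigwedge^2_0\cC$, whence $\kappa_H=0$. By the fundamental theorem for parabolic geometries (\cite{CS2009}) the automorphism algebra of the flat model is isomorphic to $\fg=A_{n+1}$ and has dimension $\dim(A_{n+1})=(n+2)^2-1=(n+1)(n+3)$; in particular this is an a priori upper bound for the point symmetry dimension, which is what will let an explicit count finish the argument.

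With the abstract identification and dimension bound in place, it remains to exhibit the generators and check they exhaust the algebra. I would verify directly that $1,x^i,u_i$ and the weighted-degree-four function $f=u(u-x^iu_i)$ are point symmetries; the first three manifestly preserve $E$, and for $f$ one checks $[\bS_f,\bX_k]\in E$ by a short computation, noting that $f$ is exactly \eqref{E:top-slot} specialized to $\fC=0$ as in the remark following Corollary~\ref{C:g-embed}. Because point symmetries form a subalgebra under the Lagrange bracket \eqref{E:Lagrange} (the bracket of two fields preserving $E$ and $F$ again preserves $E$ and $F$), and because $\{x^i,u_i,f\}$ generate all of $\fg$ by Theorem~\ref{T:sym-flat} and Corollary~\ref{C:g-embed} (valid here with $\fC=0$), every function so obtained is automatically a point symmetry. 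Iterating the brackets reproduces the table: for instance $[1,f]=\sfZ=2u-x^iu_i$, $[x^k,f]=x^k(u-x^iu_i)$, $[u_k,f]=uu_k$, while $[x^i,[u_j,f]]=x^iu_j+u\,\delta^i{}_j$ yields the quadratic $\fg_0$-generators after subtracting the central term. These functions are linearly independent and number $1+2n+(n^2+1)+2n+1=(n+1)(n+3)=\dim(\fg)$, so they span the whole point symmetry algebra.

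Finally I would read off the bigrading by letting the two grading elements $\sfZ_1=u$ and $\sfZ_{n+1}=u-x^iu_i$ (with $\sfZ_1+\sfZ_{n+1}=\sfZ$) act through the Lagrange bracket: one-line computations such as $[u,1]=-1$, $[u,x^i]=-x^i$, $[u,u_i]=0$, $[u,uu_i]=uu_i$ assign the first bidegree component, and the analogous computations for $\sfZ_{n+1}$ assign the second, placing each generator in the table. The main obstacle is not any single hard calculation but completeness: guaranteeing that no point symmetry is overlooked. This is exactly where the parabolic-geometry dimension bound does the work, reducing the problem to the explicit count above; directly solving the determining equations for $u_{ij}=0$, by contrast, would be considerably more laborious and is unnecessary.
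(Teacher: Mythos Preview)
Your proposal is correct and follows precisely the approach the paper intends. The paper does not give an explicit proof of this proposition; it is stated in the Type~A subsection with only the remark that the top slot $f=u(u-x^iu_i)$ is the specialization of \eqref{E:top-slot} to $\fC=0$, together with the Remark after Corollary~\ref{C:g-embed} that Theorem~\ref{T:sym-flat} and Corollary~\ref{C:g-embed} remain valid for $A_{n+1}$ with $\fC=0$. Your argument supplies the details behind these remarks: the identification of point symmetries of $u_{ij}=0$ with automorphisms of the flat Legendrian contact structure $\cC=E\op F$, flatness via the vanishing torsion computation of Theorem~\ref{T:flat}, the $\dim(\fg)$ upper bound from the parabolic geometry equivalence, and the explicit dimension count $(n+1)(n+3)$ to conclude. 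Your bracket computations and the bigrading verification are correct.
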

 
 The top slot is a special case of \eqref{E:top-slot} when $\fC = 0$.  (Here, $x^0, u_0$ are no longer distinguished.)\\
 
 Harmonic curvature obstructs flatness and corresponds to $H^2_+(\fg_-,\fg)$.  We summarize its components:
 \begin{itemize}
 \item $n=1$: Relative invariants $I_1, I_2$ ({\em Tresse invariants}).
 \item $n \geq 2$: Two torsions $\tau_E,\tau_F$ (obstructing the integrability of $E,F$) and a curvature $\cW$.  For semi-integrable structures, $u_{ij} = f_{ij}(x^k,u,u_k)$, $\cW$ has components $ \cW_{ij}^{kl} = \operatorname{trfr}\left( \frac{\partial^2 f_{ij}}{\partial u_k \partial u_l} \right)$; see \cite{DMT2016}.

 \end{itemize}
 
 \subsubsection{Type C}
 \label{S:TypeC}
 
 For $G = C_{n+1}$, $C_{n+1} / P_1 \cong G^{\ad} \inj \bbP(\fg)$ and $G_0^{\Ss} \cong \Sp(\fg_{-1})$ acts transitively on $\tLG(\fg_{-1})$, so instead examine the stabilizer $P_{1,n+1}$  of a Lagrangian subspace.  This induces a $|3|$-grading $\fg = \fg_{-3} \op... \op \fg_3$ with $\fg_0 \cong \bbC^2 \times A_{n-1}$ and $\fg_-$ having the same commutator relations as \eqref{E:J2-symbol} for $(J^2(\bbC^n,\bbC),\cC^{(1)})$, where $\dim(L) = n:= \ell-1$.  In $\fg_{-1} = L \op S^2 L^*$, {\em both} $L$ and $S^2L^*$ are distinguished subspaces under the $G_0$-action.

 By \cite[Cor.6.6]{Yam1983}, any $(N,\cD)$ with symbol algebra modelled on $\fg_-$ (and $\cD$ modelled on $\fg_{-1}$) is locally isomorphic to $(J^2(\bbC^n,\bbC), \cC^{(1)})$.  The structure underlying a regular, normal $(C_{n+1}, P_{1,n+1})$ geometry is a further choice of subbundle $E \subset \cD$ complementary to $V = \Ch(\cD^{-2}) \subset \cD$.  (These correspond to $L$ and $S^2 L^*$ in $\fg_{-1}$.)  Locally, $\cD \cong \cC^{(1)}$ is given by \eqref{E:J2-cansys} and $V \cong \tspan\{ \partial_{u_{ij}} \}$.  Hence, 
 \[
 E = \tspan\{ \widetilde\partial_{x^i} := \partial_{x^i} + u_i \partial_u + u_{ij} \partial_{u_j} + f_{ijk} \partial_{u_{jk}} \},
 \]
 for some functions $f_{ijk}(x^l,u,u_l,u_{l m})$ (symmetric in $i,j,k$).
 Equivalently, the geometric structure corresponds to the {\em contact} geometry of a complete system of 3rd order PDE 
 \begin{align} \label{E:3rdOrder}
 u_{ijk} = f_{ijk}(x^l,u,u_l,u_{lm}), \qquad 1 \leq i,j,k,l,m \leq n.
 \end{align}
 The $n=1$ case is the contact geometry of a 3rd order ODE.
 The flat model is:

  \begin{prop}
 The PDE system $u_{ijk} = 0$, $1 \leq i,j,k \leq n$ has contact symmetry algebra $\fg = C_{n+1}$ and isotropy $P_{1,n+1}$ on the second jet space $(x^i,u,u_i,u_{ij})$.  The generating functions are:
 \begin{align*}
 \begin{array}{|c|c|c|c|c|c|c|} \hline
 \fg_{-3} & \fg_{-2} & \fg_{-1} & \fg_0 & \fg_1 & \fg_2 & \fg_3\\ \hline
 1 & x^i & \begin{array}{c} u_i\\ x^i x^j \end{array} & \begin{array}{c} u \\ x^i u_j \end{array} &
 \begin{array}{c} x^i (x^j u_j - 2 u) \\ u_i u_j \end{array} & 
 u_i (x^j u_j - 2 u) & (x^j u_j - 2u)^2\\ \hline
 \end{array}
 \end{align*}
 \end{prop}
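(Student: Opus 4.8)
My plan is to combine the parabolic-geometry machinery already assembled in \S\ref{S:TypeC} (which pins down the abstract symmetry algebra and its dimension) with an explicit determination of the generating functions, exactly paralleling the proof of Theorem~\ref{T:sym-flat}. First I would record that, by \S\ref{S:TypeC}, the system $u_{ijk}=0$ is the subbundle choice $E=\tspan\{\partial_{x^i}+u_i\partial_u+u_{ij}\partial_{u_j}\}$ (i.e.\ $f_{ijk}\equiv 0$) complementary to $V=\Ch(\cD^{-2})$ inside $(J^2(\bbC^n,\bbC),\cC^{(1)})$; by \cite[Cor.6.6]{Yam1983} together with \cite{CS2009} this is a regular, normal parabolic geometry of type $(C_{n+1},P_{1,n+1})$. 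To see it is the \emph{flat} model I would take the evident framing $\{\partial_{x^i}+u_i\partial_u+u_{ij}\partial_{u_j},\,\partial_{u_{ij}}\}$ of $\cD$, observe that all of its structure functions are constant, and conclude $\kappa_H\in H^2_+(\fg_-,\fg)$ vanishes by the same mechanism as in Theorem~\ref{T:flat}. The Tanaka--Morimoto--\v{C}ap--Schichl equivalence then yields that the contact symmetry algebra is isomorphic to $\fg=C_{n+1}=\fsp_{2n+2}$, of dimension $(n+1)(2n+3)$, with isotropy $P_{1,n+1}$ at each point of $J^2$.

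Next I would reduce to generating functions. By B\"acklund's theorem every symmetry of $(J^2,\cC^{(1)})$ is the prolongation of a contact vector field $\bS_f$ on $(J^1,\cC)$ determined by $f=f(x^i,u,u_i)$ via \eqref{E:Xf}, whose characteristic is $Q=f$. The external symmetry condition for $\{u_{ijk}=0\}$ is that $\bS_f^{(3)}$ be tangent there; since the order-four terms produced by the prolongation cancel against its horizontal part, this collapses to the requirement that $D_iD_jD_kf$ vanish on the locus $u_{ijk}=0$, where $D_i=\partial_{x^i}+u_i\partial_u+u_{il}\partial_{u_l}$. I would then give the polynomial algebra of generating functions the weighted grading $w(x^i)=1$, $w(u_i)=2$, $w(u)=3$, under which the Lagrange bracket \eqref{E:Lagrange} is homogeneous of weight $-3$; thus $\fg_k$ is the space of weight-$(k+3)$ solutions of the determining equations, matching the $|3|$-grading displayed in the table.

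With this in place the computation is organized as for Theorem~\ref{T:sym-flat}: identify the obvious low-weight symmetries $1\in\fg_{-3}$, $x^i\in\fg_{-2}$, and $u_i,\,x^ix^j\in\fg_{-1}$ (whose prolongations act trivially or affinely on the $u_{ijk}$), and produce the one-dimensional top slot $\fg_3=\tspan\{(2u-x^iu_i)^2\}$. Writing $\sfZ=2u-x^iu_i$, the contact grading function with $\cL_{\bS_\sfZ}\sigma=2\sigma$, and $P_k=D_k\sfZ=u_k-x^lu_{lk}$, one finds $D_k(\sfZ^2)=2\sfZ P_k$ and $D_jP_k=-x^lu_{ljk}$, so every term of $D_iD_jD_k(\sfZ^2)$ carries a factor $P_\bullet$ (which vanishes on the locus) or an order-$\ge 3$ jet; hence $\sfZ^2$ is a symmetry. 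Filling in the remaining slots by (iterated) Lagrange brackets — e.g.\ $[1,\sfZ^2]\in\fg_0$, $[x^i,\sfZ^2]\in\fg_1$, $[u_j,\sfZ^2]\in\fg_2$ — recovers precisely the entries of the table, and I would check closure together with $\fg_0=\tspan\{u,x^iu_j\}\cong\bbC^2\times\fsl_n$ and that the total dimension is $(n+1)(2n+3)$, confirming $\fg\cong\fsp_{2n+2}$.

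The main obstacle is \emph{completeness}: showing that the table exhausts the symmetries, i.e.\ that $D_iD_jD_kf=0$ on $\{u_{ijk}=0\}$ admits no solutions outside the listed span, and in particular that $\dim\fg_3=1$. The clean route is to invoke the dimension bound from the first paragraph, which caps the symmetry algebra at $\dim C_{n+1}=(n+1)(2n+3)$; since the explicit list already attains this dimension and is Lagrange-bracket closed, it must be all of $\fg$. A self-contained alternative is to solve the graded determining system weight-by-weight: the top degree in the fibre coordinates $u_{ab}$ forces $f_{u_au_bu_c}=0$, so $f$ is at most quadratic in the $u_l$, after which the lower-degree conditions successively pin down the dependence on $x^i$ and $u$. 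This is routine but lengthy, which is why leaning on the parabolic-geometry bound is preferable.
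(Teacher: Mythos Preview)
The paper states this proposition without proof, so there is no argument to compare against directly; your approach is therefore evaluated on its own merits and against the paper's general methodology.

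Your strategy is sound and in fact mirrors what the paper does elsewhere: you invoke the parabolic-geometry framework of \S\ref{S:TypeC} to identify $u_{ijk}=0$ as the flat $(C_{n+1},P_{1,n+1})$-model, which caps the symmetry dimension at $(n+1)(2n+3)$; you then exhibit the listed generating functions explicitly and count. The weighted grading $w(x^i)=1$, $w(u_i)=2$, $w(u)=3$ is correct (the Lagrange bracket is homogeneous of weight $-3$), and the dimension count matches. The use of the dimension bound to close the argument, rather than solving the determining system completely, is exactly the device used in the proof of Theorem~\ref{T:sym-flat}.

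One slip to fix: in your verification that $\sfZ^2$ is a symmetry you write ``every term of $D_iD_jD_k(\sfZ^2)$ carries a factor $P_\bullet$ (which vanishes on the locus) or an order-$\ge 3$ jet''. The function $P_k=u_k-x^lu_{lk}$ does \emph{not} vanish on $\{u_{abc}=0\}$. What makes the computation work is that $D_jP_k=-x^lu_{ljk}$, so on the (infinitely prolonged) locus every surviving term in $D_iD_jD_k(\sfZ^2)$ carries at least one third-order factor $u_{\bullet\bullet\bullet}$; the $P_\bullet$'s themselves play no vanishing role. Rewriting that sentence to say that each term acquires an order-$\ge 3$ factor (hence vanishes on the locus) would make the argument clean. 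Also, be slightly careful with ``$\bS_f^{(3)}$'': the prolongation needed is to $J^3$, i.e.\ $\bS_f^{(2)}$ in the paper's convention; your cancellation remark about order-four terms is correct.
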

   
 Harmonic curvature $\kappa_H$ can be computed in a similar fashion as in \S \ref{S:Kh}.  On $\cC^{(1)}$, define a partial connection $\nabla$ such that $\nabla (\widetilde\partial_{x^i}) = 0$ and $\nabla(\partial_{u_{l m}}) = 0$, which clearly preserves the splitting $\cC^{(1)} = E \op V$.   Restricting to $n \geq 2$,  $H^2_+(\fg_-,\fg)$ decomposes into two $\fg_0$-irreps, both of homogeneity $+1$ and comprised of torsion.  (If $n=1$, a curvature appears, so this case is different.)  Hence, $\kappa_H$ is comprised of two components $\tau_E$ and $\tau_{EV}$ of torsion $T^\nabla$ and these are obtained as follows:
 \begin{enumerate}
 \item[(i)] In $\fg_-$, $V$ is modelled on $S^2 L^*$.  The Cartan product $\bigwedge^2 L^* \odot S^2 L^*$ is the kernel of the skew-symmetrization map on the first three factors of $\bigwedge^2 L^* \otimes S^2 L^*$. We calculate
 \[
 -T^\nabla(\widetilde\partial_{x^l},\widetilde\partial_{x^m}) = [ \widetilde\partial_{x^l},  \widetilde\partial_{x^m} ] =  \left( \widetilde\partial_{x^l} (f_{mjk}) - \widetilde\partial_{x^m} (f_{l jk}) \right) \partial_{u_{jk}} =: T_{l m jk} \partial_{u_{jk}}
 \]
 Thus, $\tau_E \in \Gamma(\bigwedge^2 E^* \odot V)$ has components $(\tau_E)_{l mjk} = T_{l m jk} - T_{[l m j]k}$.
 
 \item[(ii)] Let $(E \otimes V)_0$ be the kernel of the Levi-bracket restricted to $E \otimes V$.  Then $\tau_{EV} \in  \operatorname{trfr}((E \otimes V)_0)^* \otimes V)$, where all traces have been removed.  We calculate
 \[
 T^\nabla(\widetilde\partial_{x^i},\partial_{u_{l m}}) = [\partial_{u_{l m}}, \widetilde\partial_{x^i}] = \delta^l_i \partial_{u_m} + \delta^m_i \partial_{u_l} + \frac{\partial f_{ijk}}{\partial u_{l m} } \partial_{u_{jk}}.
 \]
 Now we need to remove all traces.  Let $R^{l m}_{ijk} = \frac{\partial f_{ijk}}{\partial u_{l m}}$, $S^m_{jk} = R^{rm}_{rjk}$ and $T_k = S^r_{rk}$.  Define
 \[
 (\tau_{EV})_{ijk}^{l m} := \operatorname{trfr}\left( \frac{\partial f_{ijk}}{\partial u_{l m} }\right) =   R^{l m}_{ijk} - \frac{6}{n+3} \delta^{(l}_{(i} S^{m)}_{jk)} + \frac{6}{(n+2)(n+3)} \delta^{(l}_{(i} \delta^{m)}_{j} T^{\phantom{\ast}}_{k)}.
 \]
 \end{enumerate}

 Since $\kappa_H$ completely obstructs flatness, we have:
 
 \begin{theorem} Let $n \geq 2$. The complete 3rd order PDE system \eqref{E:3rdOrder} is contact equivalent to the flat model $u_{ijk} = 0$, $1 \leq i,j,k \leq n$ if and only if $\tau_E = 0$ and $\tau_{EV} = 0$.
 \end{theorem}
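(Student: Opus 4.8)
The plan is to reduce to the general flatness theory of parabolic geometries, whose one essential input is that harmonic curvature $\kappa_H$ is a \emph{complete} obstruction to local flatness \cite[Chp.5]{CS2009}. First I would recall from the discussion preceding the statement that the contact geometry of any complete third-order system \eqref{E:3rdOrder} is the underlying structure of a (regular, normal) parabolic geometry of type $(C_{n+1},P_{1,n+1})$: by \cite[Cor.6.6]{Yam1983} the pair $(N,\cD)$ is locally $(J^2(\bbC^n,\bbC),\cC^{(1)})$, and the one remaining datum is the choice of complement $E \subset \cD$ to $V = \Ch(\cD^{-2})$, which \eqref{E:3rdOrder} encodes through the functions $f_{ijk}$. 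The flat model of this type is $u_{ijk}=0$ (the preceding Proposition). By the equivalence of categories of Tanaka--Morimoto--\v{C}ap--Schichl \cite{CS2009}, contact equivalence of two such systems coincides with local isomorphism of the associated parabolic geometries; hence the system is contact equivalent to $u_{ijk}=0$ if and only if $\kappa_H \equiv 0$.

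The crux is to identify $\kappa_H$ with the pair $(\tau_E,\tau_{EV})$. For $n \geq 2$, Kostant's theorem gives that $H^2_+(\fg_-,\fg)$ decomposes into exactly two $\fg_0$-irreducible components, both of homogeneity $+1$ and both of torsion type (valued in $\fg_{-1}$), as already noted. The partial connection $\nabla$ defined by $\nabla(\widetilde\partial_{x^i})=0$, $\nabla(\partial_{u_{lm}})=0$ preserves the splitting $\cC^{(1)}=E\op V$, hence is compatible with the $\fg_0$-reduction, so its torsion $T^\nabla$ represents the Cartan curvature modulo the image of the Lie-algebra coboundary. Because both harmonic components sit in the \emph{lowest} homogeneity $+1$, the harmonic representative is obtained by projecting $T^\nabla$ onto these two Kostant components, which at the tensorial level is realized precisely by the operations carried out above: the skew-symmetrization $(\tau_E)_{lmjk}=T_{lmjk}-T_{[lmj]k}$, landing in the Cartan product $\bigwedge^2 E^*\odot V$, and the trace removal $\operatorname{trfr}$ defining $(\tau_{EV})^{lm}_{ijk}$, landing in $\operatorname{trfr}((E\ot V)_0^*\ot V)$. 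Granting this identification, $\kappa_H=0$ if and only if $\tau_E=0$ and $\tau_{EV}=0$, and the theorem follows.

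The main obstacle is exactly this second step: confirming that $\tau_E$ and $\tau_{EV}$ are the genuine harmonic-curvature components and not merely two pieces of the non-invariant torsion $T^\nabla$. Concretely this demands (a) running Kostant's algorithm to pin down the two $\fg_0$-irreps of $H^2_+(\fg_-,\fg)$ for $(C_{n+1},P_{1,n+1})$ together with their homogeneity, and (b) checking that the stated symmetrizations and trace removals agree with the harmonic (Kostant) projection, so that the vanishing of $(\tau_E,\tau_{EV})$ is independent of the choice of $\nabla$ among connections preserving $E\op V$. Once (a) and (b) are secured, the displayed bracket computations of $[\widetilde\partial_{x^l},\widetilde\partial_{x^m}]$ and $[\partial_{u_{lm}},\widetilde\partial_{x^i}]$ furnish $T^\nabla$ explicitly, and the equivalence is immediate.
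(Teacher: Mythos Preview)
Your proposal is correct and follows essentially the same route as the paper: the theorem is stated immediately after the line ``Since $\kappa_H$ completely obstructs flatness, we have:'', so the entire argument is exactly the reduction to the general flatness theorem for regular normal parabolic geometries together with the identification of $\kappa_H$ with the pair $(\tau_E,\tau_{EV})$ carried out in items (i)--(ii) just above. Your explicit articulation of the two verification steps (a) and (b) is a fair expansion of what the paper leaves implicit, but there is no difference in strategy.
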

 
 In Definition \ref{D:G-contact}, we omitted $C_{n+1}$-contact structures (contact projective structures; $C_{n+1} / P_1$ geometries) since these are instead encoded via a class of contact connections.  Such structures are equivalent, via \v{C}ap's theory of correspondence and twistor spaces \cite{Cap2005}, to complete systems of 3rd order PDE (i.e.\ $C_{n+1} / P_{1,n+1}$ geometries) satisfying $\tau_{EV} = 0$.
 
 \section{Non-flat structures}
 \label{S:non-flat}
 
 \subsection{$G_2$-contact structures}
 \label{S:G2P2}
 
 Following \S \ref{S:Kh}, we exhibit a formula for $\kappa_H$ for $G_2$-contact structures, and then we establish some symmetry classification results.  We follow notation introduced in Example \ref{EX:G2-1}.  Here, $V = \fg_{-1} = S^3 \bbC^2$, $\fg_0 \cong \fgl_2$, the grading element is $\sfZ = -\frac{1}{3} \sfI$, and we find (via Kostant) that:
 \begin{itemize}
 \item $H^2_+(\fg_-,\fg)$ arises as a $\fg_0$-irreducible component of $\bigwedge^2_0 V^* \otimes V$.  (In particular, $\sfZ$ acts as $+1$.)
 \item $H^2_+(\fg_-,\fg) \cong S^7 \bbC^2 \otimes (\bigwedge^2 (\bbC^2)^*)^5$ as $\fg_0$-modules, i.e.\ (weighted) binary septics.
 \end{itemize}
 Let $\Gamma_k$ denote the $\fsl_2$-module $S^k \bbC^2$. 
 Then $\bigwedge^2_0 V^* \otimes V \cong \Gamma_4 \otimes \Gamma_3 \cong \Gamma_7 \op \Gamma_5 \op \Gamma_3 \op \Gamma_1$.  The $\fsl_2$-equivariant projection $\Gamma_4 \otimes \Gamma_3 \to \Gamma_7$ is simply multiplication of polynomials, i.e.\ $f \otimes g \mapsto fg$.
 
 Let us exhibit $\bigwedge^2_0 V^* \cong \Gamma_4$ explicitly.  Consider the $\fsl_2$-basis $(\sfe_0,\sfe_1,\sfe_2,\sfe_3) = (\sfr^3, 3\sfr^2 \sfs, 3\sfr\sfs^2, \sfs^3)$ of $V$.  Let $\omega^0, \omega^1, \omega^2, \omega^3$ be its dual basis.  Then $\eta$ from \eqref{E:G2-eta} is a multiple of $\omega^0 \wedge \omega^3 - 3 \omega^1 \wedge \omega^2$.  Hooking with the latter yields an $\fsl_2$-isomorphism $V \cong V^*$, which identifies $(\omega^0,\omega^1, \omega^2, \omega^3) = (-\sfs^3, \sfr\sfs^2, -\sfr^2\sfs, \sfr^3)$.  Hence,
 \begin{align} \label{E:sl2-id}
 \begin{array}{|lccccc|} \hline
 \mbox{Element of $\bigwedge^2_0 V^*$:} & \omega^2 \wedge \omega^3 & \omega^3 \wedge \omega^1 & \frac{1}{2} (\omega^0 \wedge \omega^3 + 3 \omega^1 \wedge \omega^2) & \omega^2 \wedge \omega^0 & \omega^0 \wedge \omega^1\\
 \mbox{Element of $\bigwedge^2_0 V$:} & \sfr^3 \wedge \sfr^2\sfs & \sfr^3 \wedge \sfr\sfs^2 & \frac{1}{2}(\sfr^3 \wedge \sfs^3 + 3\sfr^2\sfs \wedge \sfr\sfs^2) & \sfr^2\sfs \wedge \sfs^3 & \sfr\sfs^2 \wedge \sfs^3\\
 \mbox{Element of $\Gamma_4$:} & \sfr^4 & 2\sfr^3\sfs & 3\sfr^2\sfs^2 & 2\sfr\sfs^3 & \sfs^4\\ \hline
 \end{array}
 \end{align}
 Next, note that a CS-basis of $(V,[\eta])$ is given by $(\sfr^3, -3\sfr^2 \sfs, -6\sfs^3, -6\sfr\sfs^2 )$.  We can pointwise identify this with a given CS-framing $(\bX_0,\bX_1,\bU^0,\bU^1)$ defining a $G_2$-contact structure.
 
 \begin{theorem} Consider the $G_2$-contact structure on $(M^5,\cC)$ for a CS-framing $\bX_0,\bX_1,\bU^0,\bU^1$ of $\cC$.  Define
 \begin{align} \label{E:sl2-frame}
 \bE_0 = \bX_0, \quad \bE_1 = -\bX_1, \quad \bE_2 = -\frac{1}{2} \bU^1, \quad \bE_3 = -\frac{1}{6} \bU^0.
 \end{align}
 Given $\bY \in \Gamma(\cC)$, define $\rho(\bY) = \sfr^3 \rho_0(\bY) + 3\sfr^2\sfs \rho_1(\bY) + 3\sfr\sfs^2 \rho_2(\bY) + \sfs^3 \rho_3(\bY)$, where $\{ \rho_i(\bY) \}$ are components with respect to $\{ \bE_i \}$.  Then $\kappa_H$ is (up to a constant) the tensor product of $0 \neq \vol^5 \in (\bigwedge^2 (\bbC^2)^*)^5$ with
 \begin{align} \label{E:G2P2-Kh}
 \sfr^4 \rho([\bE_2,\bE_3])  + 2\sfr^3 \sfs \rho([\bE_3,\bE_1])  + \sfr^2\sfs^2 \rho\left(3[\bE_0,\bE_3] + [\bE_1,\bE_2]\right) + 2\sfr\sfs^3\rho([\bE_2,\bE_0]) + \sfs^4\rho([\bE_0,\bE_1]).
 \end{align}
 The $G_2$-contact structure is flat iff $\kappa_H = 0$.
 \end{theorem}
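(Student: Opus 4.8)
The plan is to specialize the three-step recipe of \S\ref{S:Kh} to the $G_2$-contact case, feeding it the explicit $\fsl_2$-module identifications recorded just above the statement. First I would fix a partial connection $\nabla$ for which the CS-framing $\bX_0,\bX_1,\bU^0,\bU^1$ (equivalently $\bE_0,\bE_1,\bE_2,\bE_3$ via \eqref{E:sl2-frame}) is parallel. Because the frame is parallel, evaluating the torsion on frame fields gives $T^\nabla(\bE_i,\bE_j)=-[\bE_i,\bE_j]$; and on $\bigwedge^2_0\cC$ the $TM/\cC$-component of the bracket vanishes by definition of the Levi kernel, so $T^\nabla$ restricts to the $\cC$-valued map $\beta\mapsto-[\beta]$ that involves only Lie brackets of the framing. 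Thus $T^\nabla\in\Gamma(\bigwedge^2_0\cC^*\otimes\cC)$ is read off purely from the structure functions, exactly as in step (ii) of \S\ref{S:Kh}.

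Next I would transport $T^\nabla$ into the model picture. The identification $\rho:\cC\to\Gamma_3=S^3\bbC^2$ sending $\bE_i\mapsto\sfe_i$ realizes the $\cC$-factor as $\Gamma_3$, while \eqref{E:sl2-id} realizes $\bigwedge^2_0\cC^*\cong\Gamma_4$. I would use the $\eta$-isotropic bivector basis of $\bigwedge^2_0\cC$ appearing in \eqref{E:sl2-id}; one checks directly that $\bE_2\wedge\bE_3$ is $\eta$-null and that the trace-free combination $3\bE_0\wedge\bE_3+\bE_1\wedge\bE_2$ is $\eta$-null (using $\eta(\bE_2,\bE_3)=\tfrac1{12}\eta(\bU^1,\bU^0)=0$ and $3\,\eta(\bE_0,\bE_3)+\eta(\bE_1,\bE_2)=0$), so these genuinely lie in $\bigwedge^2_0\cC$. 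Writing $T^\nabla=\sum_k\beta_k^*\otimes\rho(T^\nabla(\beta_k))$, each dual covector $\beta_k^*$ corresponds under \eqref{E:sl2-id} to the quartic $Q_k$ that is \emph{apolar-dual} to the $\Gamma_4$-image of $\beta_k$. This apolarity on $\Gamma_4=S^4\bbC^2$ is precisely the source of the index reversal: $\beta_k=\sfe_2\wedge\sfe_3$ has $\Gamma_4$-image $\propto\sfs^4$ and is therefore paired with $Q_k\propto\sfr^4$, the null combination has self-dual image $\propto\sfr^2\sfs^2$, and so on down the list.

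I would then apply the $\fsl_2$-equivariant projection $\Gamma_4\otimes\Gamma_3\to\Gamma_7$, which by the remark preceding the theorem is polynomial multiplication $f\otimes g\mapsto fg$. Expanding $\sum_k Q_k\cdot\rho(T^\nabla(\beta_k))$ and substituting $\rho(T^\nabla(\beta_k))=-\rho([\beta_k])$ reproduces \eqref{E:G2P2-Kh} up to an overall nonzero scalar; this is consistent with the stated conclusion, since the density power $\vol^5\in(\bigwedge^2(\bbC^2)^*)^5$ is forced by the fact that $\sfZ$ acts by $+1$ on $H^2_+(\fg_-,\fg)\cong S^7\bbC^2\otimes(\bigwedge^2(\bbC^2)^*)^5$. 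Flatness is then immediate from the general theory quoted in \S\ref{S:Kh}: $\kappa_H$ is a \emph{complete} obstruction to flatness for regular normal parabolic geometries, so the $G_2$-contact structure is flat precisely when $\kappa_H=0$.

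The main obstacle is the double-duality bookkeeping of the second step: one must simultaneously track the identification $\bigwedge^2_0\cC^*\cong\Gamma_4$ coming from \eqref{E:sl2-id} and the apolar self-duality $\Gamma_4^*\cong\Gamma_4$, then verify that the per-term scalars (including the $\tfrac13$'s hidden in the bivector normalizations) assemble into exactly the binomial weights $1,2,1,2,1$ on $\sfr^4,\sfr^3\sfs,\sfr^2\sfs^2,\sfr\sfs^3,\sfs^4$ attached to the correct brackets. Everything else is routine once the flat computation in the proof of Theorem \ref{T:flat} is recognized as the degenerate case in which all five brackets vanish.
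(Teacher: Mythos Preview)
Your approach is essentially the same as the paper's: choose $\nabla$ making the frame parallel so that $T^\nabla$ is given by brackets, identify $\bigwedge^2_0\cC^*\cong\Gamma_4$ via \eqref{E:sl2-id}, and project to $\Gamma_7$ by polynomial multiplication. The only difference is that the paper bypasses your apolarity discussion by simply writing down the dual basis $\sfe_2\wedge\sfe_3,\ \sfe_3\wedge\sfe_1,\ \tfrac{1}{3}(3\sfe_0\wedge\sfe_3+\sfe_1\wedge\sfe_2),\ \sfe_2\wedge\sfe_0,\ \sfe_0\wedge\sfe_1$ to the $\bigwedge^2_0 V^*$-basis in \eqref{E:sl2-id}; pairing each dual bivector's torsion value with the corresponding $\Gamma_4$-entry $\sfr^4,2\sfr^3\sfs,3\sfr^2\sfs^2,2\sfr\sfs^3,\sfs^4$ immediately produces the coefficients in \eqref{E:G2P2-Kh} (the $3$ and the $\tfrac{1}{3}$ cancel in the middle term), which resolves the bookkeeping you flag as the main obstacle.
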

 
 \begin{proof} The framing \eqref{E:sl2-frame} corresponds to the $\fsl_2$-basis $(\sfe_0,\sfe_1,\sfe_2,\sfe_3) = (\sfr^3, 3\sfr^2 \sfs, 3\sfr\sfs^2, \sfs^3)$.  As in \S \ref{S:Kh}, pick $\nabla$ that leaves the CS-framing, hence \eqref{E:sl2-frame}, parallel.  The basis for $\bigwedge^2_0 V^*$ in \eqref{E:sl2-id} has dual basis $\sfe_2 \wedge \sfe_3, \, \sfe_3 \wedge \sfe_1, \, \frac{1}{3} (3\sfe_0 \wedge \sfe_3 + \sfe_1 \wedge \sfe_2), \, \sfe_2 \wedge \sfe_0,\, \sfe_0 \wedge \sfe_1$.  Evaluate $T^\nabla$ on corresponding bivectors (formed from the framing \eqref{E:sl2-frame}) and find their components via $\rho$.  Multiply with corresponding elements of $\Gamma_4$ to obtain \eqref{E:G2P2-Kh}.
 \end{proof}
 
  One can naturally classify non-flat $G_2$-contact structures according to the root type of $\kappa_H$.  Some homogeneous examples are given in Table \ref{F:G2P2-hom}, with $\cV$, $[\sfQ]$, $\cE$, $\cF$ determined by \eqref{E:G-SA-var}--\eqref{E:G-Goursat}.  Note that to write $\cE$ (or $\cF$) in standard jet-coordinates, we identify the CS-element $g$ specifying the frame change from the standard CS-framing \eqref{E:std-CS} (corresponding to $u_{xx} = \frac{t^3}{3}, \, u_{xy} = \frac{t^2}{2},\, u_{yy} = t$) to the given one, and then use \eqref{E:Mobius}.  These PDE satisfy additional compatibility conditions.  A case analysis shows that the solution space generally depends on 3 arbitrary constants.  However, our type [7] PDE example is {\em inconsistent}, while our type [2,2,1,1,1] PDE example has general solution $u(x,y) = c_1 + c_2 x + c_3 y$ or an {\em arbitrary} function $f(y)$.  For all these examples, $(\cE,\cD)$ does not admit any Cauchy characteristics.

 \begin{footnotesize}
 \begin{table}[h]
 \[
 \begin{array}{|c|c|c|c|c|c|} \hline
 \mbox{Root type} & \bX_0 & \bX_1 & \bU^0 & \bU^1 & \begin{tabular}{c} $G_2$-contact structure\\ symmetries \end{tabular}\\ \hline\hline
 [7] & \partial_x + p\partial_u + y\partial_p & \partial_y + q\partial_u & \partial_p & \partial_q & \begin{array}{c} 1,\, x,\, y,\, p,\, x^3-3y^2-6qx, \\ 2q-x^2, \,7u-2px-3qy\end{array}\\ \hline
 [6,1] & \partial_x + p\partial_u + q\partial_p & \partial_y + q\partial_u & \partial_p & \partial_q & \begin{array}{c} 1,\, x,\, p,\, q,\, x^2+2y,\, 5u - px - 2qy \end{array}\\ {}
 [5,2] & \partial_x + p\partial_u + p\partial_p & \partial_y + q\partial_u & \partial_p & \partial_q & 1, \, y,\, p,\, q,\, e^x, \, 3u - qy \\ {}
 [4,3] & \partial_x + p\partial_u & \partial_y + q\partial_u + q\partial_q & \partial_p & \partial_q & 1,\, x,\, p,\, q,\, e^y,\, u+px \\ \hline
 [5,1,1] & \partial_x + p\partial_u + (p+q) \partial_p & \partial_y+q\partial_u & \partial_p & \partial_q & 1,\, p,\, q,\, e^x,\, y-x\\ {}
 [4,2,1] & \partial_x+p\partial_u+p\partial_p & \partial_y + q\partial_u -q\partial_q & \partial_p & \partial_q & 1,\, p,\, q,\, e^{-y},\, e^x\\ {}
 [3,3,1] & \partial_x + p\partial_u & \partial_y + q\partial_u+(p+q)\partial_q & \partial_p & \partial_q & 1,\, p,\, q,\, e^y,\, y-x\\ {}
 [3,2,2] & (q+\frac{x}{6})(\partial_x + p\partial_u) & y(\partial_y + q\partial_u) & y\partial_p & (q+\frac{x}{6})\partial_q & 1,\, x,\, u - px,\, u-qy,\, y + 6p \\ \hline
 [4,1,1,1] & \partial_x+p\partial_u & \partial_y+q\partial_u+(x+q)\partial_q & \partial_p & \partial_q & 1,\,x,\,q,\,p+y,\,e^y\\ {}
 [3,2,1,1] & (x+q)(\partial_x + p\partial_u) & y(\partial_y + q\partial_u) & y\partial_p & (x+q)\partial_q & 1,\, x,\, y+p,\, u-px,\, u-qy\\ {}
 [2,2,2,1] & (q-\frac{x}{6})(\partial_x + p\partial_u) & (p+\frac{y}{2})(\partial_y + q\partial_u) & (p+\frac{y}{2})\partial_p & (q-\frac{x}{6})\partial_q & 1,\, x+2q,\, y-6p,\, u-px,\, u-qy\\ \hline
 [3,1,1,1,1] & \partial_x + p\partial_u & \partial_y + q\partial_u+(x+p+q)\partial_q & \partial_p & \partial_q & 1,\,q,\,e^y,\,x-y,\,p+y\\ {}
 [2,2,1,1,1] & x(\partial_x + p\partial_u) & p(\partial_y+q\partial_u) & \frac{1}{x} \partial_p & \frac{1}{p} \partial_q & 1,\,y,\,q,\,4px-3u,\,4qy-3u \\ \hline
 [2,1,1,1,1,1] & q(\partial_x+p\partial_u)+p\partial_p & p(\partial_y+q\partial_u) & p\partial_p & q\partial_q & 1,\, p,\, q,\, qy-u,\, px-u\\ \hline
 \end{array}
 \]
 \[
 \begin{array}{|c|l|c|} \hline
 \mbox{Root type} & \mbox{$G_2$-contact structure PDE $\cE$ associated to the models above} \\ \hline\hline
 [7] & 
 	u_{xx} = \frac{1}{3} (u_{yy})^3 + y, \quad
	u_{xy} = \frac{1}{2} (u_{yy})^2 \\ \hline
 [6,1] & 
 	u_{xx} = \frac{1}{3} (u_{yy})^3 + u_y, \quad 
 	u_{xy} = \frac{1}{2} (u_{yy})^2 \\ {}
 [5,2] & 
 	u_{xx} = \frac{1}{3} (u_{yy})^3 + u_x, \quad 
	u_{xy} = \frac{1}{2} (u_{yy})^2 \\ {}
 [4,3] & 
 	u_{xx} = \frac{1}{3} (u_{yy}-u_y)^3, \quad 
	u_{xy} = \frac{1}{2} (u_{yy}-u_y)^2 \\ \hline
 [5,1,1] & 
 	u_{xx} = \frac{1}{3} (u_{yy})^3 + u_x + u_y,\quad 
	u_{xy} = \frac{1}{2} (u_{yy})^2 \\ {}
 [4,2,1] & 
 	u_{xx} = \frac{1}{3} (u_{yy}+u_y)^3 + u_x,\quad 
	u_{xy} = \frac{1}{2} (u_{yy}+u_y)^2 \\ {}
 [3,3,1] & 
 	u_{xx} = \frac{1}{3} (u_{yy}-u_x-u_y)^3,\quad 
	u_{xy} = \frac{1}{2} (u_{yy}-u_x-u_y)^2 \\ {}
 [3,2,2] & 
 	u_{xx} = \frac{(u_{yy})^3 y^4}{3(u_y + \frac{x}{6})^4},\quad 
	u_{xy} = \frac{(u_{yy})^2 y^2}{2(u_y + \frac{x}{6})^2} \\ \hline
 [4,1,1,1] & 
 	u_{xx} = \frac{1}{3} (u_{yy}-x-u_y)^3, \quad
	u_{xy} = \frac{1}{2} (u_{yy}-x-u_y)^2 \\ {}
 [3,2,1,1] & 
 	u_{xx} = \frac{(u_{yy})^3 y^4}{3(u_y + x)^4},\quad
	u_{xy} = \frac{(u_{yy})^2 y^2}{2(u_y + x)^2} \\ {}
 [2,2,2,1] & 
 	u_{xx} = \frac{1}{3} (u_{yy})^3 \left( \frac{u_x + \frac{y}{2}}{u_y - \frac{x}{6}}\right)^4,\quad
	u_{xy} = \frac{1}{2} (u_{yy})^2 \left( \frac{u_x + \frac{y}{2}}{u_y - \frac{x}{6}}\right)^2 \\ \hline
 [3,1,1,1,1] & 
 	u_{xx} = \frac{1}{3} (u_{yy} - x - u_x - u_y)^3,\quad
	u_{xy} = \frac{1}{2} (u_{yy} - x - u_x - u_y)^2 \\ {}
 [2,2,1,1,1] & 
 	u_{xx} = \frac{1}{3} (u_{yy})^3 \frac{(u_x)^6}{x^2} ,\quad
	u_{xy} = \frac{1}{2} (u_{yy})^2 \frac{(u_x)^3}{x} \\ \hline
 [2,1,1,1,1,1] & 
 	u_{xx} = \frac{1}{3} (u_{yy})^3 \frac{(u_x)^4}{(u_y)^4} + \frac{u_x}{u_y},\quad
	u_{xy} = \frac{1}{2} (u_{yy})^2 \frac{(u_x)^2}{(u_y)^2}\\ \hline
 \end{array}
 \]
  \caption{Some homogeneous $G_2$-contact structures on $(J^1(\bbC^2,\bbC), [du - pdx - q dy])$}
 \label{F:G2P2-hom}
 \end{table}
 \end{footnotesize}

 \begin{prop}
 Consider a non-flat $G_2$-contact structure on $(M^5,\cC)$ with harmonic curvature $\kappa_H$ and symmetry algebra $\cS$.  Fix any $m \in M$.  Then:
 \begin{enumerate}
 \item[(i)] If $\kappa_H(m)$ has root type $[7]$, then $\dim(\cS) \leq 7$.
 \item[(ii)] If $\kappa_H(m)$ has root type $[6,1], [5,2]$, or $[4,3]$, then $\dim(\cS) \leq 6$.
 \item[(iii)] If $\kappa_H(m)$ has $\geq 3$ distinct roots, then $\dim(\cS) \leq 5$.
 \end{enumerate}
 For all root types except possibly $[1,1,1,1,1,1,1]$, these upper bounds are sharp.
 \end{prop}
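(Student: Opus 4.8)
The plan is to combine the explicit description of $\kappa_H$ from \S\ref{S:G2P2} with the general upper bound on symmetry dimension for non-flat parabolic geometries. Recall that $\fg = G_2$ carries the contact grading $\fg = \fg_{-2}\oplus\cdots\oplus\fg_2$, so $\dim\fg_- = \dim\fg_{-2} + \dim\fg_{-1} = 1 + 4 = 5$ and $\fg_0 \cong \fgl_2$, and that $H^2_+(\fg_-,\fg) \cong S^7\bbC^2 \otimes (\bigwedge^2(\bbC^2)^*)^5$ with the grading element $\sfZ$ acting by $+1$. Thus $\phi := \kappa_H(m)$ is, up to the one-dimensional weight factor, a binary septic $f \in S^7\bbC^2$, whose root type is precisely the partition of $7$ recording the multiplicities of its linear factors.

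First I would invoke the universal upper bound of Kruglikov--The for non-flat parabolic geometries: if $\kappa_H(m) = \phi \neq 0$, then $\cS$ is filtered and its associated graded injects into the Tanaka prolongation of the pair $(\fg_-, \fa_0^\phi)$, where $\fa_0^\phi := \fann_{\fg_0}(\phi)$; hence $\dim\cS \le \dim\fg_- + \dim\fa_0^\phi + \dim\fa_+^\phi$, with $\fa_+^\phi$ the positive prolongation. Since $(G_2,P_2)$ is not among the finitely many exceptions to prolongation rigidity, we have $\fa_+^\phi = 0$, so $\dim\cS \le 5 + \dim\fa_0^\phi$.

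Next I would compute $\fa_0^\phi = \fann_{\fgl_2}(\phi)$. Writing $\fgl_2 = \bbC\sfI \oplus \fsl_2$, the center acts on $\phi$ by $-3$ (equivalently $\sfZ = -\frac{1}{3}\sfI$ acts by $+1$), so it is never annihilating; parametrizing $X_0 = \alpha\sfI + Y$ with $Y \in \fsl_2$ shows that $X_0\cdot\phi = 0$ forces $\alpha$ to be determined by $Y$ through $Y\cdot f = 3\alpha f$. Hence $\fa_0^\phi \cong \fs := \{Y \in \fsl_2 : Y\cdot f \in \bbC f\}$, the stabilizer in $\fsl_2$ of the point $[f] \in \bbP(S^7\bbC^2)$, whose dimension depends only on the number of distinct roots. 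If $f = \ell^7$ (type $[7]$), then $[f]$ lies on the rational normal septic curve, a one-dimensional $PGL_2$-orbit, so $\dim\fs = 3 - 1 = 2$; concretely $\fs = \tspan\{\sfE,\sfH\}$. If $f$ has exactly two distinct factors (types $[6,1],[5,2],[4,3]$), then $\fs$ is the one-dimensional torus fixing both roots. If $f$ has $\ge 3$ distinct factors, then any element of the connected stabilizer fixes $\ge 3$ points of $\bbP^1$ and is therefore trivial, so $\fs = 0$. Feeding these into $\dim\cS \le 5 + \dim\fs$ gives the bounds $7$, $6$, and $5$ of (i)--(iii).

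Finally, for sharpness I would exhibit, for each listed root type, the matching homogeneous model from Table \ref{F:G2P2-hom}, whose symmetry column has exactly the asserted dimension ($7$ for $[7]$; $6$ for each of $[6,1],[5,2],[4,3]$; $5$ for each displayed type with $\ge 3$ distinct roots), so the bound is attained in every case except $[1,1,1,1,1,1,1]$, for which no five-dimensional homogeneous model is produced. I expect the main obstacle to be the justification that $\fa_+^\phi = 0$, i.e.\ prolongation rigidity for $(G_2,P_2)$: one either cites the Kruglikov--The classification (noting that $G_2/P_2$ is not on the exceptional list) or verifies directly that no element of $\fg_1 \oplus \fg_2$ can occupy the top slot of a symmetry once the harmonic curvature is nonzero. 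The remaining steps reduce to the orbit-dimension count for binary septics, which is routine.
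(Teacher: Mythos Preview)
Your proof is correct and follows essentially the same route as the paper: invoke the Kruglikov--The bound $\dim\cS \le \dim\fg_- + \dim\fann_{\fg_0}(\kappa_H(m))$ together with prolongation rigidity of $(G_2,P_2)$, compute the annihilator dimension case by case, and read off sharpness from Table~\ref{F:G2P2-hom}. The only difference is presentational: the paper computes $\fann_{\fg_0}(\phi)$ by exhibiting explicit spanning elements for the representatives $\sfr^{7-a}\sfs^a \otimes \vol^5$, whereas you identify $\fann_{\fg_0}(\phi)$ with the projective stabilizer $\{Y\in\fsl_2 : Y\cdot f \in \bbC f\}$ and read off its dimension from the $PGL_2$-orbit structure on binary septics---a slightly cleaner packaging of the same calculation.
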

 
 \begin{proof} We have $\dim(\cS) \leq 5 + \dim(\fann(\kappa_H(m)))$.  This follows immediately from \cite[Thm.3.3]{KT2016} (Tanaka prolongation dimension gives a pointwise upper bound) and \cite[Cor.3.4.8]{KT2014} ($G_2 / P_2$ is prolongation-rigid).  For (i) and (ii), take $\tGL_2$-representative elements $\sfr^{7-a}\sfs^a \otimes \vol^5$, which have annihilators in $\fg_0$ spanned by:
 \[
 a=7: \,\, 7\sfI - 3\sfH, \, \sfY; \qquad
 a=6: \,\, 5\sfI - 3\sfH; \qquad
 a=5: \,\, \sfI - \sfH; \qquad
 a=4: \,\, \sfI - 3\sfH.
 \]
 This proves (i) and (ii).  For (iii), the annihilator is trivial.  The final statement follows from Table \ref{F:G2P2-hom}.
 \end{proof}
 
 We do not know any {\em homogeneous} $G_2$-contact structures of root type $[1,1,1,1,1,1,1]$.  The CS-framing
 \[
 \bX_0 = \partial_x + p\partial_u, \quad
 \bX_1 = \partial_y + q\partial_u + u\partial_q, \quad
 \bU^0 = \partial_p, \quad
 \bU^1 = (1+up) \partial_q + p(\partial_y + q\partial_u)
 \]
 determines a type $[1,1,1,1,1,1,1]$ structure with 4-dimensional symmetry algebra spanned by $p, q, e^{-y}, e^y$.

 \subsection{Submaximally symmetric $G$-contact structures}
 \label{S:submax-sym}
 
 Let $G \neq A_\ell,C_\ell$ with associated Jordan algebra $W$ (Table \ref{F:sym}) and basis $\{ \sfw_a \}$.  Fix $1 \leq c \leq \dim(W)$ and take the $G$-contact structure for the CS-framing
 \begin{align} \label{E:curved-G-contact}
 \bX_0 = \partial_{x^0} + u_0 \partial_u + x^c \partial_{u_0}, \quad
 \bX_a = \partial_{x^a} + u_a \partial_u, \quad
 \bU^0 = \partial_{u_0}, \quad
 \bU^a = \partial_{u_a}.
 \end{align}
 As in \S \ref{S:G2P2}, we can find the corresponding $\cE$ and $\cF$ in standard jet-coordinates:
 \begin{align}
  \cE: & \quad (u_{ij}) 
 = \displaystyle\left( \begin{array}{c|c} u_{00} & u_{0b} \\ \hline u_{a0} & u_{ab}\end{array} \right) 
 = \left( \begin{array}{c|c} \fC(t^3) + x^c & \frac{3}{2} \fC_b(t^2)\\[0.05in] \hline \frac{3}{2} \fC_a(t^2) & 3\fC_{ab}(t)  \end{array} \right), \label{E:sm-EV}\\
  \cF: & \quad \left\{ \begin{array}{l} u_{00} = t^a t^b u_{ab} - 2\fC(t^3) + x^c\\ u_{a0} = t^b u_{ab} - \frac{3}{2} \fC_a(t^2)\end{array} \right., \label{E:sm-FV}
 \end{align}
 which are also the first and second-order envelopes determined by the parametrization
 \[
  \fP_t : \quad  u_{00} - 2 t^a u_{a0} + t^a t^b u_{ab} = \fC(t^3) + x^c.
 \]
 
 \begin{prop} \label{P:sym-non-flat}
 The symmetry algebra of the $G$-contact structure \eqref{E:curved-G-contact} is spanned by
 \begin{align}
 & 1, \quad x^i, \quad u_i - \delta_i{}^c \frac{(x^0)^2}{2}, \quad 
 7u - 2 x^0 u_0 - 3 x^a u_a, \quad
 x^0 u_a + \frac{3}{2} \fC_a(X^2) - \frac{1}{6} \delta_a{}^c (x^0)^3, \label{E:curved-sym-1}\\
 & 7 A^a{}_b x^b u_a - k \left( 3x^0 u_0 + x^a u_a \right),  \qbox{where}\quad 0 = A^a{}_{(b} \fC_{de)a} \qbox{and} A^c{}_b = k \delta^c{}_b, \label{E:curved-sym-2}
 \end{align}
 and $0 \leq i \leq n-1$ and $1 \leq a,b \leq n-1$.
 \end{prop}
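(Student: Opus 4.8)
The plan is to invoke Proposition \ref{P:same-sym} and reduce the symmetry computation to the sub-adjoint variety field $\cV$ itself, exactly as in Theorem \ref{T:sym-flat}: a contact vector field $\bS_f$ (with generating function $f = f(x^i,u,u_i)$ via \eqref{E:Xf}) is a symmetry precisely when $[\bS_f,\bV(\lambda,t)] \in \widehat{T}_{[\bV(\lambda,t)]}\cV$ for all $[\lambda,t]$. Here $\bV(\lambda,t)$ is assembled from the framing \eqref{E:curved-G-contact} via \eqref{E:G-SA-var}; the only change from the flat case is the extra term $x^c\partial_{u_0}$ inside $\bX_0$, so that $\widehat{T}_{[\bV]}\cV$ is governed by \eqref{E:sm-EV} rather than \eqref{E:G-involutive}. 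This lets me bypass prolongation and work with the same polynomial-in-$t$ incidence equations as \eqref{E:mu0}--\eqref{E:mua}, now carrying an inhomogeneous $x^c$-correction.

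A useful organizing observation, which I would record first, is that the structure \eqref{E:curved-G-contact} is homogeneous for a rescaled grading: assigning weighted degrees $x^0 \mapsto 2$, $x^a \mapsto 3$, $u \mapsto 7$, $u_0 \mapsto 5$, $u_a \mapsto 4$ makes $\bX_0$ (and hence all of $\cV$) weighted-homogeneous, precisely because $\partial_{x^0}$, $u_0\partial_u$, and $x^c\partial_{u_0}$ all carry weight $-2$. The generating function $7u - 2x^0 u_0 - 3x^a u_a$ in \eqref{E:curved-sym-1} is exactly the Euler field of this grading, and every listed symmetry is weighted-homogeneous, the correction terms $-\delta_i{}^c(x^0)^2/2$ and $\tfrac{3}{2}\fC_a(X^2) - \tfrac16\delta_a{}^c(x^0)^3$ being precisely what restores homogeneity. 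This makes the sufficiency direction transparent: I would verify by direct bracket computation that $\partial_u$, the $x^i$, the corrected $u_i$, the Euler field, and the corrected $\ff_{-1}$-elements each send $\bV(\lambda,t)$ into $\widehat{T}_{[\bV]}\cV$, with the $x^c$-terms absorbed by the $(x^0)$-power corrections. For the $\ff_0$-type family \eqref{E:curved-sym-2}, closure of $[\bS_f,\bV]$ forces two conditions: $A^a{}_{(b}\fC_{de)a} = 0$ (i.e.\ $A \in \fder(\fC)$, so the cubic-dependent part of $\bV$ is preserved) and $A^c{}_b = k\delta^c{}_b$ (compatibility with the distinguished direction $x^c\partial_{u_0}$ introduced by the perturbation); I would check these are exactly the stated constraints.

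For completeness I would rerun the flat-case argument of Theorem \ref{T:sym-flat} with the $x^c$-correction inserted. Writing $f$ as an unknown and extracting the coefficients of the perturbed incidence equations \eqref{E:mu0}--\eqref{E:mua} degree-by-degree in $t$ (and now also sorting by the rescaled weight), the top-slot solution \eqref{E:top-slot} and most of $\fg_1 \op \fg_0$ no longer solve the system once the $x^c$-term is present: the perturbation shifts the weighted degrees so that only the weighted-homogeneous combinations listed in \eqref{E:curved-sym-1}--\eqref{E:curved-sym-2} survive. Concretely, the top-degree-in-$t$ equations force the same vanishing of second derivatives of $f$ as before, reducing $f$ to a low-degree polynomial, and the remaining equations then pin down the coefficients and impose the constraints on $(A,k)$.

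The main obstacle I anticipate is the bookkeeping in this completeness step: tracking how the single inhomogeneous term $x^c\partial_{u_0}$ propagates through the degree decomposition and verifying that it annihilates precisely the flat symmetries outside the listed span (in particular the entire $\fg_2$ slot \eqref{E:top-slot} and the non-surviving part of $\fg_0$), while leaving the constrained combinations \eqref{E:curved-sym-2}. The cleanest route is likely to fix the weighted grading first, observe that any symmetry decomposes into weighted-homogeneous pieces, and then solve the incidence equations separately in each weight, where the admissible weights are sharply limited by the $x^c$-correction.
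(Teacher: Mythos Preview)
Your proposal is correct and follows essentially the same approach as the paper: reduce via Proposition \ref{P:same-sym} to the incidence condition $[\bS_f,\bV(\lambda,t)]\in\widehat{T}_{[\bV(\lambda,t)]}\cV$, write out the analogues of \eqref{E:mu0}--\eqref{E:mua} with the extra $x^c$-term carried along, and extract $t$-degree components to whittle $f$ down to the listed span with the constraints on $(A,k)$.

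The one genuine addition you make is the weighted grading $(x^0,x^a,u,u_0,u_a)\mapsto(2,3,7,5,4)$, which the paper does not invoke.  This is a clean organizing device: it explains at a glance why the Euler field $7u-2x^0u_0-3x^au_a$ appears, why the listed symmetries carry precisely the $(x^0)$-power corrections they do, and why the completeness argument can be run one weight at a time.  The paper instead proceeds by brute extraction of $t$-degree and $u_0$-degree components (equations \eqref{E:curved-3}--\eqref{E:curved-8}), arriving at the same endpoint through a longer but fully explicit chain of substitutions.  Your grading would streamline that bookkeeping, though you would still need to carry it out rather than merely anticipate it; the paper's value is that it actually records each reduction step, including the use of \eqref{E:dual-cubic-id} to kill $g_{u_bu_e}$ and the differentiation of \eqref{E:curved-8} that forces $\gamma_{x^0}=0$.
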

 
 \begin{proof}
 This calculation is similar to that for Theorem \ref{T:sym-flat}.  We have $\bV := \bV(1,t)$ as in \eqref{E:G-SA-var} and $\widehat\cV = \widehat{T}_{[\bV]} \cV$ as in \eqref{E:G-involutive}.  
 Let $\rho^i := dx^i([\bV,\bS_f]) = -\bV(f_{u_i})$ and $\mu_i := du_i([\bV,\bS_f]) = \bV\left( \frac{df}{dx^i} \right) + \delta_i{}^0 f_{u_c}$.  Then
 \[
 [\bV,\bS_f] = \rho^0 \bX_0 + \rho^a \bX_a + (\mu_0 - \rho^0 x^c )\bU^0 + \mu_a \bU^a.
 \]
  Using \eqref{E:G-involutive}, the condition $[\bV,\bS_f] \in \widehat{T}_{[\bV]} \cV$, expressed in the CS-framing $\{ \bX_i, \bU^i \}$ is
 \begin{align}
 0 &= \mu_0 - \rho^0 (x^c + \fC(t^3)) - \frac{3}{2} \rho^a \fC_a(t^2) 
  = \bV\left( \frac{df}{dx^0} \right) + f_{u_c} + \bV(f_{u_0}) (x^c + \fC(t^3)) - \frac{3}{2} \bV(f_{u_a}) \fC_a(t^2) \label{E:curved-1}\\
 0 &= \mu_a - \frac{3}{2} \rho^0 \fC_a(t^2) - 3 \rho^b \fC_{ab}(t)
 = \bV\left( \frac{df}{dx^a} \right) + \frac{3}{2} \bV(f_{u_0}) \fC_a(t^2) - 3 \bV(f_{u_b}) \fC_{ab}(t) \label{E:curved-2}
 \end{align}
 The $t$-degree 6 and 5 components of \eqref{E:curved-1} imply $f_{u_0 u_0} = f_{u_0 u_i} = 0$, so $f = g(x^i,u,u_a) + h(x^i,u) u_0$. Setting $u_0 = 0$ in \eqref{E:curved-1}[$t$-degree 1] implies
 \begin{align} \label{E:curved-3}
 0 = (h_{x^a} + u_a h_u) x^c, \qquad 0 = g_{x^a x^0} + u_a g_{u x^0}.
 \end{align}
 The first equation in \eqref{E:curved-3} implies $h = h(x^0)$.  Furthermore, the equations \eqref{E:curved-1}[$(t,u_0)$-degree $(0,2)$], \eqref{E:curved-2}[$(t,u_0)$-degree $(0,1)$], \eqref{E:curved-1}[$t$-degree 4], and \eqref{E:curved-2}[$(t,u_0)$-degree $(1,1)$] imply
 \begin{align} \label{E:curved-4}
  g_{ux^a} = 0, \qquad g_{uu} = 0, \qquad g_{u_b u_e} = 0, \qquad g_{u u_b} = 0.
 \end{align}
 The third of these follows from \eqref{E:curved-1}[$t$-degree 4] and \eqref{E:dual-cubic-id}: $0 = -t^a \bX_a(f_{u_0}) \fC(t^3) + \frac{9}{4} \fC_a(t^2) \fC_b(t^2) f_{u_a u_b} = \frac{9}{4} (3(\fC^*)^{abe} \bX_a(f_{u_0}) + f_{u_b u_e}) \fC_b(t^2) \fC_e(t^2)$.  Since $\fC_b(t^2)$ is arbitrary and $f_{u_a u_0} = f_{u u_0} = 0$, then $f_{u_b u_e} = 0$.
 
 Now combine \eqref{E:curved-3} and \eqref{E:curved-4}, and after a straightforward calculation, we obtain
 \begin{align} \label{E:curved-5}
 h = h(x^0), \quad g = G^a(x^i) u_a + \alpha(x^a) + \beta(x^0) + \gamma(x^0) u.
 \end{align}
 Using \eqref{E:curved-2}[$t$-degree 1] and the coefficients of $u$, $u_a$, and $u_0$ in \eqref{E:curved-1}[$t$-degree 0], these must satisfy
 \begin{align}
 & 0 = (G^b)_{x^a x^0} + \delta_a{}^b \gamma_{x^0}, \qquad (G^b)_{x^e x^a} = 0, \qquad \alpha_{x^e x^a} - 3 (G^b)_{x^0} \fC_{abe} = 0, \label{E:curved-6} \\
 &\gamma_{x^0 x^0} = 0, \qquad (G^a)_{x^0 x^0} = 0, \qquad h_{x^0 x^0} + 2\gamma_{x^0} = 0, \label{E:curved-7}\\
 & \beta_{x^0 x^0} + x^c (2h_{x^0} + \gamma) + G^c = 0. \label{E:curved-8}
 \end{align}
 (The first equation in \eqref{E:curved-6} is a residual equation from \eqref{E:curved-3}.)   
  Differentiating \eqref{E:curved-8} with respect to $x^0,x^c$ yields $0 = 2 h_{x^0 x^0} + \gamma_{x^0} + (G^c)_{x^0 x^c} = -4\gamma_{x^0}$ (no sum on $c$).  Hence, we must have
 \begin{align}
 & \gamma = \gamma_0, \qquad h = \widetilde{k} x^0 + h_0, \qquad 
  G^a = \widetilde{A}^a{}_b x^b + B^a x^0 + C^a, \qquad 
  \widetilde{A}^c{}_b = -\delta^c{}_b (2\widetilde{k} + \gamma_0)
\\
 & \alpha = \frac{3}{2} B^b \fC_b(X^2) + \alpha_b x^b + \alpha_0, \qquad \beta = -\frac{1}{6} B^c (x^0)^3 - \frac{1}{2} C^c (x^0)^2 + \beta_1 x^0.
 \end{align}
  Define $\widetilde{A}^a{}_b = A^a{}_b + \frac{1}{3} \delta^a{}_b (\widetilde{k} - \gamma_0)$.  Then $A^c{}_b = (-\frac{7}{3} \widetilde{k} - \frac{2}{3} \gamma_0) \delta^c{}_b$ and \eqref{E:curved-1}[$t$-degree 3] yields $0 = A^a{}_{(b} \fC_{de)a}$.  Setting $\widetilde{k} = -\frac{2}{7} \gamma_0$ and $A^a{}_b = 0$ yields the symmetry $u - \frac{2}{7} x^0 u_0 - \frac{3}{7} x^a u_a$.  Now relabel $\widetilde{k} = -\frac{3}{7} k$ and set $\gamma_0 = 0$ for the remaining symmetries.  The $\bV := \bV(0,t)$ case quickly follows from the condition $f_{u_0 u_i} = 0$.
 \end{proof}

 The number of symmetries in \eqref{E:curved-sym-1} is $\dim(M) + 1 + \dim(W) = 3n+1$.  By \eqref{E:curved-sym-2}, the linear transformation $\sfw_a \mapsto A^b{}_a \sfw_b$ must be a symmetry of $\fC \in S^3 W^*$, so from \S \ref{S:LM} is contained in $\ff^{\Ss}_0$.  Moreover, it must preserve the line $[\sfw^c] \in \bbP(W^*)$.  To maximize the solution space of \eqref{E:curved-sym-2}, we should examine the minimal $F_0$-orbit(s) in $\bbP(W^*)$.  (Recall that $W$ is reducible in type $B$ and $D$.)

 \begin{table}[h]
 \begin{align*}
 & \begin{array}{|c||c|c|c|c|c|c|c|c|} \hline
 G/P & B_\ell / P_2\,\,\, (\ell \geq 3) & D_\ell / P_2\,\,\, (\ell \geq 4) \\ \hline
 \fS_{(21)} & 2\ell^2 - 5\ell + 8 & 2\ell^2 - 7\ell + 11\\ \hline
 \fS_{(23)} & \begin{array}{ll} 2\ell^2 - 7\ell + 15, & \ell \geq 4; \\ 11, & \ell \geq 3 \end{array} 
 & \begin{array}{c} 2\ell^2 - 9\ell + 19 \\ \begin{tabular}{c} (same for $\fS_{(24)}$\\ when $\ell = 4$)\end{tabular} \end{array} \\ \hline
 \end{array} 
 &\begin{array}{|c||c|c|c|c|c|c|c|c|} \hline
 G/P & G_2 / P_2 & F_4 / P_1 & E_6 / P_2 & E_7 / P_1 & E_8 / P_8\\ \hline
 \fS & 7 & 28 & 43 & 76 & 147\\ \hline
 \end{array}
 \end{align*}
 \caption{Submaximal symmetry dimensions $\fS$ for $G$-contact structures (see \cite{KT2014})}
 \label{F:sm}
 \end{table}
 
 \begin{theorem}
 If $[\sfw^c]$ lies in a minimal $F_0$-orbit in $\bbP(W^*)$, then the $G$-contact structure \eqref{E:curved-G-contact} is submaximally symmetric.
 \end{theorem}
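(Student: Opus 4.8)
The plan is to convert the statement into an isotropy-dimension count supplied by Proposition~\ref{P:sym-non-flat}, and then to read off submaximality from orbit-stabilizer together with the tabulated values of \cite{KT2014}. First I would note that the symmetries in \eqref{E:curved-sym-1} always contribute the same total $3n+1=\dim(M)+1+\dim(W)$, independent of $c$, while those in \eqref{E:curved-sym-2} are parametrized injectively by
\begin{align*}
\fh_c:=\{A\in\fder(\fC):A\text{ preserves the line }[\sfw^c]\in\bbP(W^*)\},
\end{align*}
because $A^a{}_{(b}\fC_{de)a}=0$ says exactly $A\in\fder(\fC)$, while $A^c{}_b=k\delta^c{}_b$ says exactly that the induced $W^*$-action fixes $[\sfw^c]$ (with $k=A^c{}_c$). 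Hence $\dim(\cS)=(3n+1)+\dim(\fh_c)$, and the whole problem reduces to maximizing $\dim(\fh_c)$.

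Next I would make $\fder(\fC)$ explicit. Writing $\fder(\fC)=\ff_0^{\Ss}\oplus\ft_\fC$ with $\ft_\fC\subset\fz(\ff_0)$ the central directions that preserve $\fC$, one finds $\ft_\fC=0$ exactly when $W$ is $\ff_0^{\Ss}$-irreducible (the cases $G_2,F_4,E_6,E_7,E_8$); in the reducible cases a short computation produces a nonzero $\ft_\fC$, of dimension $2$ for $D_4$ (where $\fC=\det$ on diagonal matrices) and of dimension $1$ for the spin factors $W=\JS_m$ (types $B_\ell$, $D_\ell$ with $\ell\geq5$), the extra line there being $\diag(-\tfrac12\id_{\bbC^m},1)$. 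In all cases $\fder(\fC)\subset\ff_0$ acts on $\bbP(W^*)$, so by orbit-stabilizer $\dim(\fh_c)$ is largest exactly when the orbit of $[\sfw^c]$ is smallest. Since the central directions fix any weight line projectively, this is precisely the hypothesis that $[\sfw^c]$ lie in a minimal $F_0$-orbit, and then $\ft_\fC\subset\fh_c$ contributes its full dimension.

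It then remains to compute the maximal $\dim(\fh_c)$ and compare with Table~\ref{F:sm}. In the exceptional cases the minimal orbit in $\bbP(W^*)$ is the Veronese $\nu_2(\bbP^2)$, the Segre $\Seg(\bbP^2\times\bbP^2)$, the Grassmannian $\tGr(2,6)$, and the $E_6$-minimal (Cayley-plane) orbit, of dimensions $2,4,8,16$; subtracting these from $\dim(\ff_0^{\Ss})=8,16,35,78$ gives $\dim(\fh_c)=6,12,27,62$ and hence $\dim(\cS)=28,43,76,147$, matching $\fS$. In the reducible cases the distinct minimal $F_0$-orbits — the coordinate points for $D_4$, and for the spin factors the fixed line $[\sfw^\infty]$ versus an isotropic line in $\bbP((\bbC^m)^*)$ — reproduce, using $\dim(\fso_m)=\binom{m}{2}$ and the contribution of $\ft_\fC$, the tabulated values (e.g.\ $\fS_{(21)}$ and $\fS_{(23)}$ for $B_\ell$). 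Since each computed $\dim(\cS)=\fS<\dim(\fg)$, the geometry \eqref{E:curved-G-contact} is necessarily non-flat, and as \cite{KT2014} identifies $\fS$ as the maximal dimension among non-flat structures, it is submaximally symmetric.

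The main obstacle is this final matching in the reducible types $B_\ell$, $D_\ell$ (and $D_4$), where $W$ decomposes as an $\ff_0^{\Ss}$-module: one must enumerate all minimal $F_0$-orbits and their dimensions, track the extra central torus $\ft_\fC$, and separately dispatch the low-rank coincidences (e.g.\ $B_3,D_4$, and $\JS_1,\JS_2$ with $\ff_0^{\Ss}$ trivial or abelian). A cleaner alternative I would pursue, to avoid the case analysis, is to show the harmonic curvature of \eqref{E:curved-G-contact} is pointwise a fixed vector $\phi_c\in H^2_+(\fg_-,\fg)$ whose $F_0$-orbit in $\bbP(H^2_+(\fg_-,\fg))$ is minimal exactly when $[\sfw^c]$ is; together with the prolongation-rigidity underlying Table~\ref{F:sm}, this would identify $\dim(\cS)$ with the Tanaka bound $\dim(\fg_-)+\dim(\fann_{\fg_0}(\phi_c))$ and reduce submaximality to $\phi_c$ being a lowest-weight vector.
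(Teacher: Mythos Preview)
Your approach is correct and essentially the same as the paper's: both reduce the problem to counting $3n+1$ from \eqref{E:curved-sym-1} plus the dimension of the stabilizer of $[\sfw^c]$ in the symmetries of $\fC$, then match the total against the values $\fS$ from \cite{KT2014}. The paper's proof is terser --- it only writes out the $E_8$ and $D_\ell$ $(\ell\geq 5)$ cases and declares the rest similar --- whereas you carry through all the exceptional computations explicitly and are more careful about the central torus contribution $\ft_\fC$ in the reducible cases; your identification of \eqref{E:curved-sym-2} with $\fh_c$ and the orbit-stabilizer argument make the logic more transparent than the paper's sketch.

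Your proposed alternative via harmonic curvature and prolongation-rigidity is not in the paper but is a legitimate route: the structures \eqref{E:curved-G-contact} are in fact the homogeneous models constructed in \cite{KT2014} realizing $\fS$, and that reference establishes prolongation-rigidity for these $(G,P)$, so identifying $\kappa_H$ of \eqref{E:curved-G-contact} with a minimal-orbit vector $\phi_c$ would indeed bypass the case analysis. This would be cleaner conceptually, though it trades the elementary dimension count for an appeal to the machinery of \cite{KT2014}.
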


 \begin{proof} We give the proof for $\fg = E_8$ and $\fg = D_\ell$ $(\ell \geq 5)$.  The other cases are treated similarly.
 \begin{itemize}
 \item $\fg = E_8$, $\ff_0^{\Ss} = E_6$, $n = 28$: The $E_6$-highest weights of $W$ and $W^*$ are $\lambda_6$ and $\lambda_1$.  The dimension of $P_1 \subset E_6$ is 62.  Adding this to $3n+1 = 85$ agrees with $\fS = 147$.
 \item $\fg = D_\ell$, $\ff_0^{\Ss} = D_{\ell-3}$, $n = 2\ell-4$:  Here $3n+1 = 6\ell-11$, $m = 2\ell - 6$ and $W = \bbC^m \op \bbC \cong W^*$ as $D_{\ell-3}$-reps.  Also, $\dim(\fz(\ff_0)) = 2$, so one element acts non-trivially on $\fC$, and the other preserves it.  There are two harmonic curvature branches:
 \begin{enumerate}
 \item[(i)] $\bbC^m$: highest weight $\lambda_1$, so $(3n+1) + 1 + \dim(P_1) = (6\ell - 11) + 1 + (2\ell^2 - 15 \ell + 29) = 2\ell^2 - 9\ell + 19 = \fS_{(23)}$.
 \item[(ii)] $\bbC$: $(3n+1) + 1 + \dim(D_{\ell-3}) = 6\ell - 11 + 1 + (\ell-3)(2\ell-7) = 2\ell^2 - 7\ell + 11 = \fS_{(21)}$.
 \end{enumerate}
 \end{itemize}
 \end{proof}
  
 \subsection{Type A and C}
 \label{S:AC-submax}

 In Table \ref{F:AC-submax}, we give submaximally symmetric PDE (for $n \geq 2$) in the type A and C cases.  The $n=1$ cases are the classically known 2nd and 3rd order ODE cases.
 
 \begin{footnotesize}
 \begin{table}[h]
 \[
 \begin{array}{|c|c|c|c|c|c|} \hline
 G/P & \mbox{Invariants} & \mbox{Submax sym dim} & \mbox{Model} & \mbox{Symmetries}\\ \hline\hline
 \begin{array}{c} A_{n+1} / P_{1,n+1} \\ (n \geq 2) \end{array}
 & \begin{array}{l} \tau_E \neq 0,\\ \tau_F = 0,\\ \cW = 0 \end{array} & 
 \begin{array}{c}
 \fS_{(12)} = n^2 + 4 
 \end{array} & \begin{array}{c} u_{ij} = 0\\ \mbox{except}\\ u_{11} = x^n \end{array} &
 \begin{array}{cccc}
 1, \quad x^i, \quad u_k\,\, (k \neq n),\quad x^i u_l \,\, (1 \neq l \neq n),\\
 (x^1)^2 - 2 u_n, \quad x^1 u_n - \frac{1}{6} (x^1)^3, \\
 x^1 u_1 - 2 x^n u_n, \quad u - x^i u_i + x^1 u_1
 \end{array}
 \\ \hline 
 \begin{array}{c} A_{n+1} / P_{1,n+1} \\ (n \geq 2) \end{array}
 & \begin{array}{l} \tau_E = 0,\\ \tau_F = 0,\\ \cW \neq 0 \end{array} & 
 \begin{array}{c}
 \fS_{(1,n+1)} = n^2 + 4
 \end{array} & \begin{array}{c} u_{ij} = 0\\ \mbox{except}\\ u_{11} = (u_n)^2 \end{array} &
 \begin{array}{cccc}
  1, \quad x^k\,\,(k \neq n), \quad u_i, \quad x^k u_l \,\, (k \neq n; l \neq 1), \\
  u_n (x^1)^2 + x^n, \quad u - x^i u_i - \frac{1}{2} x^n u_n, \\
  x^1 u_1 + x^n u_n
 \end{array}
 \\ \hline
 \begin{array}{c} C_{n+1} / P_{1,n+1} \\ (n \geq 2) \end{array}
 & \begin{array}{l} \tau_E \neq 0,\\ \tau_{EV} = 0 \end{array} 
 & 
 \begin{array}{c}
 \fS_{(12)} = 2n^2 - n + 5
 \end{array}
 & \begin{array}{c} u_{ijk} = 0 \\ \mbox{except}\\ u_{111} = x^2 \end{array}
 & \begin{array}{c}
 1, \quad x^i, \quad u_k \,\, (k \neq 2), \quad x^i x^j, \quad x^i u_l\,\,l \geq 3,\\
 u_k u_l\, (k,l \geq 3), \quad
 u - x^2 u_2, \quad 6u_2 - (x^1)^3, \\
 24 x^1 u_2 - (x^1)^4, \quad
 x^1 u_1 - 3 x^2 u_2
 \end{array}\\ \hline
 \begin{array}{c} C_{n+1} / P_{1,n+1} \\ (n \geq 2) \end{array}
 & \begin{array}{l} \tau_E = 0,\\ \tau_{EV} \neq 0 \end{array} 
 & 
 \begin{array}{c} 
 \fS_{(1,n+1)} = \frac{3n^2+ n + 8}{2} 
 \end{array}
 & \begin{array}{c} u_{ijk} = 0 \\ \mbox{except}\\ u_{nnn} = u_{11} \end{array}
 & \begin{array}{c}
 1, \quad x^i, \quad u, \quad u_i, \quad
 x^k x^l \,\, (k \neq 1 \neq l), \quad \\
 x^k u_l \,\, (k \neq 1; l \neq n),\\
 3(x^1)^2 + (x^n)^3, \quad 3 x^1 u_1 + 2 x^n u_n - 5 u
 \end{array}\\ \hline
 \end{array}
 \]
 \caption{Submaximally symmetric PDE for $A_{n+1} / P_{1,n+1}$ and $C_{n+1} / P_{1,n+1}$ geometries}
 \label{F:AC-submax}
 \end{table}
 \end{footnotesize}

 \appendix 
 
 \section{Some explicit PDE $\cE$ for the flat $G$-contact structure}
 \begin{footnotesize}
 \[
 \begin{array}{cccc}
 G & B_\ell \mbox{ or } D_\ell & G_2 & D_4\\ \hline\hline
 W & \JS_m = \bbC^m \op \bbC & \GJ & \cJ_3(\underline{0}) \\ \hline
 \mbox{Coordinate} & (v,\lambda) & \lambda & \diag(t_1,t_2,t_3)\\ \hline
 (u_{ij}) & \left( \begin{array}{@{}c|c|c@{}}
 \langle v, v \rangle \lambda & \langle \cdot, v \rangle \lambda & \frac{1}{2}\langle v, v \rangle \\ \hline 
 \langle \cdot, v \rangle \lambda & \langle \cdot, \cdot \rangle \lambda & \langle \cdot, v \rangle\\ \hline
 \frac{1}{2} \langle v, v \rangle & \langle \cdot, v \rangle & 0
  \end{array} \right) & \left( \begin{array}{@{}c|c@{}}
 \frac{\lambda^3}{3} & \frac{\lambda^2}{2} \\ \hline 
 \frac{\lambda^2}{2} & \lambda \end{array} \right) & 
 \left( \begin{array}{@{}c|ccc@{}}
 2\lambda_1 \lambda_2 \lambda_3 & \lambda_2 \lambda_3 & \lambda_1 \lambda_3 & \lambda_1 \lambda_2\\ \hline 
 \lambda_2 \lambda_3 & 0 & \lambda_3 & \lambda_2\\ 
 \lambda_1 \lambda_3 & \lambda_3 & 0 & \lambda_1 \\
 \lambda_1 \lambda_2 & \lambda_2 & \lambda_1 & 0 \end{array} \right)
 \end{array}
 \]
 \end{footnotesize}

 Given $A = (a_{rs}) \in \tMat_{3 \times 3}(\bbC)$, let $\sfC_{rs}(A)$ be its $(r,s)$-th cofactor.
 \begin{itemize}
 \item $G = F_4$: $W = \cJ_3(\bbR_\bbC)$, so $a_{rs} = a_{sr}$.
 \begin{footnotesize}
 \begin{align*}
 (u_{ij}) =
 \left( \begin {array}{c|ccc|ccc} 
 \det(A) & \frac{1}{2} \sfC_{11}(A) & \frac{1}{2} \sfC_{22}(A) & \frac{1}{2} \sfC_{33}(A) & \sfC_{12}(A) & \sfC_{13}(A) & \sfC_{23}(A)\\ \hline
 \frac{1}{2} \sfC_{11}(A) & 0 &  \frac{1}{2} a_{33} &  \frac{1}{2} a_{22} & 0 & 0 & -a_{23}\\
 \frac{1}{2} \sfC_{22}(A) &  \frac{1}{2} a_{33} & 0 & \frac{1}{2} a_{11} & 0 & -a_{13} & 0 \\
 \frac{1}{2} \sfC_{33}(A) &  \frac{1}{2} a_{22} &  \frac{1}{2} a_{11} & 0 & -a_{12} & 0 & 0\\ \hline
 \sfC_{12}(A) & 0 & 0 & -a_{12} & -a_{33} & a_{23} & a_{13} \\ 
 \sfC_{13}(A) & 0 & -a_{13} & 0 & a_{23}& -a_{22} & a_{12} \\ 
 \sfC_{23}(A) & -a_{23} & 0 & 0 & a_{13} & a_{12} & -a_{11}\end {array}
 \right)
 \end{align*}
 \end{footnotesize}
 \item $G = E_6$: Since $W$ has weight $\lambda_1 + \lambda_1'$ for $\ff_0^{\Ss} = A_2 \times A_2$, we may use $W = \tMat_{3\times 3}(\bbC)$ as an alternative model to $W = \cJ_3(\bbC_\bbC)$.
 \begin{footnotesize}
 \begin{align*}
 (u_{ij}) = 
 \left( \begin {array}{c|ccc|ccc|ccc} 
 2 \det(A) & \sfC_{11}(A) & \sfC_{12}(A) & \sfC_{13}(A) & \sfC_{21}(A) & \sfC_{22}(A) & \sfC_{23}(A) & \sfC_{31}(A) & \sfC_{32}(A) & \sfC_{33}(A)\\ \hline
  \sfC_{11}(A) & 0 & 0 & 0 & 0 & a_{33} & -a_{32} & 0 & -a_{23} & a_{22} \\
  \sfC_{12}(A) & 0 & 0 & 0 & -a_{33} & 0 & a_{31} & a_{23} & 0 & -a_{21}\\
  \sfC_{13}(A) & 0 & 0 & 0 & a_{32} & -a_{31} & 0 & -a_{22} & a_{21} & 0\\ \hline
  \sfC_{21}(A) & 0 & -a_{33} & a_{32} & 0 & 0 & 0 & 0 & a_{13} & -a_{12}\\
  \sfC_{22}(A) & a_{33} & 0 & -a_{31} & 0 & 0 & 0 & -a_{13} & 0 & a_{11}\\
  \sfC_{23}(A) & -a_{32} & a_{31} & 0 & 0 & 0 & 0 & a_{12} & -a_{11} & 0\\ \hline
  \sfC_{31}(A) & 0 & a_{23} & -a_{22} & 0 & -a_{13} & a_{12} & 0 & 0 & 0\\
  \sfC_{32}(A) & -a_{23} & 0 & a_{21} & a_{13} & 0 & -a_{11} & 0 & 0 & 0\\
  \sfC_{33}(A) & a_{22} & -a_{21} & 0 & -a_{12} & a_{11} & 0 & 0 & 0 & 0
 \end {array}
 \right)
 \end{align*}
 \end{footnotesize}
 \end{itemize}

 
 \vspace{-10pt}

 \end{document}